\documentclass[12pt,a4paper]{article}
\usepackage{amsfonts}

\usepackage{mathrsfs}

\usepackage{stmaryrd}
\usepackage{amsmath}
\usepackage{amssymb}
\usepackage{xcolor}
\usepackage{bbm}
\usepackage{mathrsfs}
\usepackage{graphicx}
\usepackage{lipsum}

\usepackage{enumerate}
\usepackage{theorem}
\usepackage{indentfirst}
\makeatletter
\def\tank#1{\protected@xdef\@thanks{\@thanks
        \protect\footnotetext[0]{#1}}}
\def\bigfoot{

    \@footnotetext}
\makeatother

\topmargin=-1cm
\oddsidemargin=-1mm
\evensidemargin=-1mm
\textwidth=165mm
\textheight=250mm

\newcommand{\ea}{\end{array}}

\newtheorem{theorem}{Theorem}[section]
\newtheorem{hypothesis}{Hypothesis}[section]

\newtheorem{proposition}{Proposition}[section]

\newtheorem{lemma}{Lemma}[section]
\newtheorem{definition}{Definition}[section]
\newtheorem{Rem}{Remark}[section]

{\theorembodyfont{\rmfamily}
}

\newenvironment{proof}{Proof.}

\def \eref#1{\hbox{(\ref{#1})}}

\setcounter{equation}{0}

\renewcommand{\d}{d}

\begin{document}
\title{{\Large \bf Large Deviation Principle for McKean-Vlasov Quasilinear Stochastic Evolution Equations}}

\author{{Wei Hong$^{a}$},~~{Shihu Li$^{b}$}\footnote{Corresponding author \newline E-mail addresses: weihong@tju.edu.cn, shihuli@jsnu.edu.cn, weiliu@jsnu.edu.cn},~~{Wei Liu$^{b,c}$}
\\
 \small $a.$ Center for Applied Mathematics, Tianjin University, Tianjin 300072, China \\
 \small $b.$ School of Mathematics and Statistics, Jiangsu Normal University, Xuzhou 221116, China\\
  \small $c.$ Research Institute of Mathematical Sciences, Jiangsu Normal University, Xuzhou 221116, China}
\date{}
\maketitle
\begin{center}
\begin{minipage}{145mm}
{\bf Abstract.} This paper is devoted to investigating the Freidlin-Wentzell's large deviation principle for a class of McKean-Vlasov quasilinear SPDEs perturbed by small multiplicative noise. We adopt the variational framework and the modified weak convergence criteria to prove the Laplace principle for McKean-Vlasov type SPDEs, which
is equivalent to the large deviation principle. Moreover, we do not assume any compactness condition of embedding in the
Gelfand triple to handle both the cases of bounded and unbounded  domains in applications. The main results can be applied to various McKean-Vlasov type SPDEs such as distribution dependent stochastic porous media type equations and  stochastic p-Laplace type equations.

\vspace{3mm} {\bf Keywords:} McKean-Vlasov SPDE; Large deviation principle;~Weak convergence method;~Porous media equation;~p-Laplace equation.

\noindent {\bf Mathematics Subject Classification (2010):} {60H15; 60F10}

\end{minipage}
\end{center}

\section{Introduction}
The large deviation principle (LDP) is one of classical topics in the probability theory, which mainly characterizes  the asymptotic property of remote tails of a family of probability distributions
and has widespread applications in different areas such as information theory,  thermodynamics, statistics and engineering. In the case of stochastic processes, a key idea of studying the LDP is to find a deterministic path
which the diffusion is concentrated around with high probability. For this purpose, the small perturbation type  LDP (also called Freidlin-Wentzell's LDP) for stochastic differential equations in finite dimensional case is established by Freidlin and Wentzell in the pioneering work \cite{FW}. Afterwards, it has been widely investigated in the past several decades, we refer the readers to the classical monographs \cite{A,DZ,S,V1} and references therein for the detailed exposition on the background and applications of large deviation theory.

There are a great deal of literatures studying the Freidlin-Wentzell's LDP in various mathematical settings through different approaches. For example, Freidlin \cite{Fr} concerned the LDP associated with the small noise limit for a class of stochastic reaction-diffusion equations, one can see also \cite{DaZa} by Da Prato and Zabczyk or \cite{P} by Peszat for the extensions to the infinite
 dimensional diffusions or stochastic partial differential equations (SPDEs) with global Lipschitz drifts.
We  refer the reader to \cite{CR} that the authors investigated the LDP for stochastic reaction-diffusion equations with local Lipschitz reaction term. As the first large deviation result for quasilinear type SPDEs,  R\"{o}ckner et al. \cite{RWW} derived the LDP in both cases of small noise and small time for stochastic generalized porous media equations.

The above-mentioned papers all use the classical time discretization approach and the contraction principle introduced by Freidlin and Wentzell in \cite{FW}, which could be very complicated in terms of studying the LDP for infinite dimensional nonlinear stochastic dynamical systems.  Another approach of investigating the LDP is the well-known weak convergence method developed by Budhiraja, Dupuis and Ellis et al in \cite{BCD,BD,BDM,DE}, which is mainly based on the variational representation formula for measurable functionals of Brownian motion. For some relevant LDP results using weak convergence method, we refer the interested readers to the works \cite{BGJ,BM,CG,CM,DWZZ,MSS,MSZ,RZ12,RZ2,SS,XZ} and references therein for recent progress on different stochastic dynamical systems.

The main goal of this paper is to study the asymptotic property for a family of McKean-Vlasov stochastic partial differential equations (MVSPDEs) with small Gaussian noise as follows,
\begin{equation}\label{e00}
\left\{ \begin{aligned}
&dX_t^\varepsilon=A(t,X_t^\varepsilon,\mathscr{L}_{X_t^\varepsilon})dt+\sqrt{\varepsilon}B(t,X_t^\varepsilon,\mathscr{L}_{X_t^\varepsilon})dW_t,\\
&X_0^\varepsilon=x,
\end{aligned} \right.
\end{equation}
where $\varepsilon>0$ is a small parameter, $W_t$ is a cylindrical Wiener process and $\mathscr{L}_{X_t^\varepsilon}$ stands for the distribution of solution $X_t^\varepsilon$, the coefficients $A,B$ fulfill some appropriate assumptions given in the next section.

Due to its intrinsic link with nonlinear Fokker-Planck-Kolmogorov equations and many other
applications, MVS(P)DEs, also called distribution dependent S(P)DEs or mean field S(P)DEs, have attracted widespread attention in recent years. MVSDEs were initially developed by Kac \cite{K1} to investigate the Boltzman equation corresponding to the particle density
in diluted monatomic gases and to study the stochastic toy model for the
Vlasov kinetic equation of plasma. We also refer the reader to the classical paper \cite{M1} for the connections between MVSDEs and interacting particle systems.
Recently, Wang \cite{W1} derived the strong/weak existence and uniqueness of solutions to a class of MVSDEs with monotone drift and studied the Harnack type inequalities and exponential ergodicity, which can be applied to deal with the homogeneous Landau type equations. Subsequently, the well-posedness, gradient estimates
and Harnack type inequalities for MVSDEs with singular coefficients have been studied by Huang and Wang in \cite{HW1}. For more investigations on this topic, one can see \cite{BLPR,BR0,BR1,BR2,HRW,HRW1,HS,RTW,RW3} and references therein.

To the best of our knowledge, there are only  few results concerning the LDP for MVSDEs so far. Herrmann et al.~\cite{HIP} obtained the Freidlin-Wentzell's LDP  in path space with the uniform topology under the superlinear growth and coercivity
hypothesises of the drift. Dos Reis et al.~\cite{DST} also investigated the Freidlin-Wentzell's LDP in both uniform and H\"{o}lder topologies via assuming that the coefficients satisfy some extra time H\"{o}lder continuity conditions. Ren and Wang \cite{RW2} established Donsker-Varadhan type large deviations for a class of semilinear path-distribution dependent SPDEs by using exponential equivalence arguments.  The main method in \cite{HIP} and \cite{DST} is to use the time discretization and certain approximating technique to get the small perturbation type LDP. As we mentioned above, this classical techniques are quite complicated to extend to study nonlinear SPDEs by showing the exponential estimates and tightness. In order to overcome this difficulty, the corresponding weak convergence criteria with respect to distribution dependent case was established in the recent work \cite{LSZZ}, where the authors proved the Freidlin-Wentzell's LDP for a class of MVSDEs under the similar framework as in \cite{DST} but with small L\'{e}vy jumps. Besides the LDP, the authors in \cite{LSZZ} also establish the criteria to investigate the moderate deviation theory.

In this paper, based on this modified weak convergence criteria in \cite{LSZZ}, we aim to study the Freidlin-Wentzell's LDP for a class of nonlinear MVSPDEs (\ref{e00}) driven by multiplicative Gaussian noise, and we use the variational framework that has been constructed in our recent work \cite[Section 2]{HL} to study the existence and uniqueness of solutions for a family nonlinear MVSPDEs. We need to point out that in comparison to \cite{LSZZ}, we now work on the MVSDEs in infinite dimensional case using the Gelfand triple (see Section 2.1 below) since we want to cover some quasilinear type MVSPDEs. Thus we need to derive some apriori estimates of solutions involving different spaces, which is quite different to the finite dimensional case. To the best of our knowledge, there is no LDP result for McKean-Vlasov quasilinear SPDEs in the literature. Another point is that we want to drop the compactness assumption of Gelfand triple in some existing works (cf.~\cite{L1,RZ2}) concerning the classcial variational framework (without distribution dependence), see \cite[Section 3]{RZ2} or \cite[Section 2]{L1} for the details.
To overcome such difficulty, some time discretization techniques mainly inspired by \cite{CM} and \cite{K2} are also employed. Based on this generalized variational framework, our main results on the LDP are applicable to several McKean-Vlasov quasilinear SPDEs such as distribution dependent stochastic porous media type equations and stochastic p-Laplace type equations.

The rest of this article is organized as follows. In Section 2, we present the mathematical framework and recall some basic theory of the LDP. Then we formulate the main results of the present paper. Section 3 is devoted to proving our main results, and in Section 4, some concrete MVSPDE models are given to illustrate the applications of the main results.

\section{Main Results}
\setcounter{equation}{0}
 \setcounter{definition}{0}
 In this section, we first introduce the mathematical framework for MVSPDEs and recall the definitions of large deviation principle and Laplace principle with their relations. Furthermore, we state the weak convergence criteria with respect to the case of distribution dependence and then formulate the main results of this work.

\subsection{Mathematical framework}
Now we present the generalized variational framework for SPDE with distribution  dependent coefficients.

Let us denote by $(U,\langle\cdot,\cdot\rangle_U), (H, \langle\cdot,\cdot\rangle_H)$ some separable Hilbert spaces and $H^*$ the dual space of $H$. Let $V$ be a reflexive Banach space such that $V\subset H$ is continuous and dense. We identify $H$ with its dual space by means of the Riesz isomorphism, then one can get the following Gelfand triple
$$V\subset H(\cong H^*)\subset V^*.$$
The dualization between $V$ and $V^*$ is denoted by $_{V^*}\langle\cdot,\cdot\rangle_V$, then it is clear that $_{V^*}\langle\cdot,\cdot\rangle_V|_{H\times V}=\langle\cdot,\cdot\rangle_H$. $L_2(U,H)$ denotes the space of all Hilbert-Schmidt maps from $U$ to $H$.

 $\mathscr{P}(H)$ stands for the space of all probability measures on $H$ with the weak topology. Furthermore, we set
$$\mathscr{P}_2(H):=\Big\{\mu\in\mathscr{P}(H):\mu(\|\cdot\|_{H}^2):=\int_H\|\xi\|_H^2\mu(d\xi)<\infty\Big\}.$$
Then $\mathscr{P}_2(H)$ is a Polish space under the $L^2$-Wasserstein distance
$$\mathbb{W}_{2,H}(\mu,\nu):=\inf_{\pi\in\mathscr{C}(\mu,\nu)}\Big(\int_{H\times H}\|\xi-\eta\|_H^2\pi(d\xi,d\eta)\Big)^{\frac{1}{2}},~\mu,\nu\in\mathscr{P}_2(H),$$
here $\mathscr{C}(\mu,\nu)$ stands for the set of all couplings for the measures $\mu$ and $\nu$, i.e., $\pi\in\mathscr{C}(\mu,\nu)$ is a probability measure on $H\times H$ such that $\pi(\cdot\times H)=\mu$ and $\pi(H\times \cdot)=\nu$.

Let $T>0$ be fixed. For some measurable maps
$$
A:[0,T]\times V\times\mathscr{P}(H)\rightarrow V^*,~~B:[0,T]\times V\times\mathscr{P}(H)\rightarrow L_2(U,H),
$$
we consider the following type of McKean-Vlasov stochastic evolution equation on $H$,
\begin{equation}\label{e1}
dX_t=A(t,X_t,\mathscr{L}_{X_t})dt+B(t,X_t,\mathscr{L}_{X_t})dW_t,
\end{equation}
where  $\{W_t\}_{t\in [0,T]}$ is an $U$-valued cylindrical Wiener process defined on a complete filtered probability space $\left(\Omega,\mathscr{F},\mathscr{F}_{t\geq0},\mathbb{P}\right)$ (i.e. the path
of $W$ take values in  $C([0,T];U_1)$, where $U_1$ is another
Hilbert space in which the embedding $U\subset U_1$ is
Hilbert--Schmidt).

In this paper, we impose that $A$ and $B$ satisfy the following assumptions.

\begin{hypothesis}\label{h1}
There are some constants $\alpha>1$, $C,\theta,\gamma>0$ such that the following conditions hold.
\begin{enumerate}
\item [$({\mathbf{H}}{\mathbf{1}})$]$(\text{Demicontinuity})$ For all  $t\in[0,T]$, $v\in V$, the map
\begin{eqnarray*}
V\times\mathscr{P}_2(H)\ni(u,\mu)\mapsto_{V^*}\langle A(t,u,\mu),v\rangle_V
\end{eqnarray*}
is continuous.
\item [$({\mathbf{H}}{\mathbf{2}})$]$(\text{Coercivity})$ For all $u\in V$ and $\mu\in\mathscr{P}_2(H)$,
\begin{eqnarray*}
2_{V^*}\langle A(\cdot,u,\mu),u\rangle_V+\|B(\cdot,u,\mu)\|_{L_2(U,H)}^2\leq C\|u\|_H^2+C\mu(\|\cdot\|_H^2)-\theta\|u\|_V^\alpha+C~\text{on}~[0,T].
\end{eqnarray*}
\item [$({\mathbf{H}}{\mathbf{3}})$]$(\text{Monotonicity and Lipschitz})$ For all $u,v\in V$ and $\mu,\nu\in\mathscr{P}_2(H)$,
\begin{eqnarray*}
2_{V^*}\langle A(\cdot,u,\mu)-A(\cdot,v,\nu),u-v\rangle_V \leq C\big(\|u-v\|_H^2+\mathbb{W}_{2,H}(\mu,\nu)^2\big)~\text{on}~[0,T]
\end{eqnarray*}
and
\begin{eqnarray*}
\|B(\cdot,u,\mu)-B(\cdot,v,\nu)\|_{L_2(U,H)}^2\leq C\big(\|u-v\|_H^2+\mathbb{W}_{2,H}(\mu,\nu)^2\big)~\text{on}~[0,T].
\end{eqnarray*}
\item [$({\mathbf{H}}{\mathbf{4}})$]$(\text{Growth})$ For all $u\in V$ and $\mu\in\mathscr{P}_2(H)$,
\begin{eqnarray*}
\|A(\cdot,u,\mu)\|_{V^*}^{\frac{\alpha}{\alpha-1}}\leq C\big(1+\|u\|_V^{\alpha}+\mu(\|\cdot\|_H^2)\big)~\text{on}~[0,T].
\end{eqnarray*}

\item [$({\mathbf{H}}{\mathbf{5}})$]$(\text{Time H\"{o}lder continuity})$ For all $u\in V$, $\mu\in\mathscr{P}_2(H)$ and $t,s\in[0,T]$,
\begin{eqnarray*}
\|B(t,u,\mu)-B(s,u,\mu)\|_{L_2(U,H)}\leq C\Big(1+\|u\|_H+\sqrt{\mu(\|\cdot\|_H^2)}\Big)|t-s|^\gamma.
\end{eqnarray*}

\end{enumerate}
\end{hypothesis}
\begin{Rem}
The conditions $({\mathbf{H}}{\mathbf{1}})$-$({\mathbf{H}}{\mathbf{4}})$ will be used to guarantee the existence and uniqueness of solution to Eq.~(\ref{e1}). In order to investigate the LDP, we further assume the time H\"{o}lder continuity $({\mathbf{H}}{\mathbf{5}})$ of diffusion coefficient $B$. Moreover, it should be mentioned that if $B$ is time homogeneous, then $({\mathbf{H}}{\mathbf{5}})$ is automatically fulfilled.
\end{Rem}
\
\begin{definition}\label{d1}
We call a continuous $H$-valued $(\mathscr{F}_t)_{t\geq 0}$-adapted process $\{X_t\}_{t\in[0,T]}$ is a solution of Eq.~(\ref{e1}), if for its $dt\times \mathbb{P}$-equivalent class $\hat{X}$
\begin{eqnarray*}
\hat{X}\in L^\alpha\big([0,T]\times\Omega,dt\times\mathbb{P};V\big)\cap L^2\big([0,T]\times\Omega,dt\times\mathbb{P};H\big),
\end{eqnarray*}
where $\alpha$ is the same as defined in $({\mathbf{H}}{\mathbf{2}})$ and $\mathbb{P}$-a.s.
\begin{eqnarray*}
X_t=X_0+\int_0^t A(s,\bar{X}_s,\mathscr{L}_{\bar{X}_s})ds+\int_0^t B(s,\bar{X}_s,\mathscr{L}_{\bar{X}_s})dW_s,~t\in[0,T],
\end{eqnarray*}
here $\bar{X}$ is an $V$-valued progressively measurable $dt\times\mathbb{P}$-version of $\hat{X}$.
\end{definition}

We recall the following existence and uniqueness result to Eq.~(\ref{e1}), which has been established in \cite[Theorem 2.1]{HL}.
\begin{proposition}\label{p1}
Assume $({\mathbf{H}}{\mathbf{1}})$-$({\mathbf{H}}{\mathbf{4}})$, for any initial value $X_0\in L^2(\Omega; H)$, Eq.~(\ref{e1}) admits a unique solution $\{X_t\}_{t\in[0,T]}$ in the sense of Definition \ref{d1} and satisfies that
$$E\Big[\sup_{t\in[0,T]}\|X_t\|_H^2\Big]<\infty.$$
\end{proposition}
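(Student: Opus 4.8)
The plan is to combine a fixed-point argument for the flow of marginal laws with the classical variational (monotone SPDE) theory applied to a ``frozen'' equation. First I would fix a flow $\mu=(\mu_t)_{t\in[0,T]}\in C([0,T];\mathscr{P}_2(H))$ and put $A^\mu(t,\omega,u):=A(t,\omega,u,\mu_t)$, $B^\mu(t,\omega,u):=B(t,\omega,u,\mu_t)$. Since $t\mapsto\mu_t(\|\cdot\|_H^2)=\mathbb{W}_{2,H}(\mu_t,\delta_0)^2$ is continuous on the compact interval $[0,T]$, it is bounded there, so Hypothesis \ref{h1} for $(A,B)$ should translate into the standard assumptions of the classical variational framework for $(A^\mu,B^\mu)$, with the coercivity and growth constants merely enlarged by $\sup_{t\le T}\mu_t(\|\cdot\|_H^2)$; hence for any $X_0\in L^2(\Omega;H)$ the frozen equation $dX_t=A^\mu(t,X_t)\,dt+B^\mu(t,X_t)\,dW_t$, $X_0=x$, has a unique solution $X^\mu$ in the sense of Definition \ref{d1}, with $\EE\big[\sup_{t\in[0,T]}\|X_t^\mu\|_H^2\big]<\infty$.

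Next I would set $\Phi(\mu)_t:=\mathscr{L}_{X_t^\mu}$; path-continuity of $X^\mu$ in $H$, the bound above and dominated convergence give $\mathbb{W}_{2,H}(\Phi(\mu)_s,\Phi(\mu)_t)^2\le\EE\|X_s^\mu-X_t^\mu\|_H^2\to0$, so $\Phi$ maps $C([0,T];\mathscr{P}_2(H))$ into itself, which is a complete metric space under $d_T(\mu,\nu):=\sup_{t\le T}\mathbb{W}_{2,H}(\mu_t,\nu_t)$ since $\mathscr{P}_2(H)$ is Polish. To see that $\Phi$ is eventually a contraction, I would take $\mu,\nu$, set $Z_t:=X_t^\mu-X_t^\nu$, and apply the It\^{o} formula for $\|\cdot\|_H^2$ in the Gelfand triple (licit because $Z$ and the associated drifts lie in $L^\alpha([0,T]\times\Omega;V)$ and $L^{\alpha/(\alpha-1)}([0,T]\times\Omega;V^*)$); taking expectations kills the martingale term, and using both parts of $({\mathbf{H}}{\mathbf{3}})$ at $(X_s^\mu,X_s^\nu,\mu_s,\nu_s)$ gives
\[
\EE\|Z_t\|_H^2\le C\int_0^t\big(\EE\|Z_s\|_H^2+\mathbb{W}_{2,H}(\mu_s,\nu_s)^2\big)\,ds .
\]
Gronwall then yields $\mathbb{W}_{2,H}(\Phi(\mu)_t,\Phi(\nu)_t)^2\le\EE\|Z_t\|_H^2\le C_T\int_0^t\mathbb{W}_{2,H}(\mu_s,\nu_s)^2\,ds$, and iterating $n$ times $d_T(\Phi^n\mu,\Phi^n\nu)^2\le\frac{(C_TT)^n}{n!}\,d_T(\mu,\nu)^2$; choosing $n$ large makes $\Phi^n$ a strict contraction, so $\Phi$ has a unique fixed point $\mu^\ast$.

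For this fixed point one has $\mathscr{L}_{X_t^{\mu^\ast}}=\mu_t^\ast$ for all $t$, so $X:=X^{\mu^\ast}$ solves (\ref{e1}) and inherits $\EE[\sup_{t\le T}\|X_t\|_H^2]<\infty$ from the first step; this gives existence together with the asserted bound. For uniqueness, if $X,Y$ both solve (\ref{e1}) with $X_0=Y_0$, I would put $\mu_t:=\mathscr{L}_{X_t}$, $\nu_t:=\mathscr{L}_{Y_t}$ and run the same It\^{o} computation; since $\mathbb{W}_{2,H}(\mu_s,\nu_s)^2\le\EE\|X_s-Y_s\|_H^2$, this gives $\EE\|X_t-Y_t\|_H^2\le 2C\int_0^t\EE\|X_s-Y_s\|_H^2\,ds$, whence $X\equiv Y$ by Gronwall and path-continuity.

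The main obstacle will be twofold: (i) checking carefully that freezing the law really preserves \emph{all} the structural hypotheses of the classical monotone-SPDE theory with constants uniform on $[0,T]$ --- this is exactly where continuity of $\mu$, hence boundedness of $t\mapsto\mu_t(\|\cdot\|_H^2)$, is used to control the second-moment terms appearing in $({\mathbf{H}}{\mathbf{2}})$ and $({\mathbf{H}}{\mathbf{4}})$ --- and (ii) justifying the It\^{o} formula for $\|\cdot\|_H^2$ applied to the difference of two variational solutions driven by the same $W$, using only the global monotonicity $({\mathbf{H}}{\mathbf{3}})$. Everything else --- the fixed point on measure flows and the Gronwall estimates --- should be routine, the McKean--Vlasov feature entering only through the extra Wasserstein term that Gronwall absorbs.
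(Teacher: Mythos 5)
Your proposal is correct and is essentially the standard route: the paper itself does not prove Proposition \ref{p1} but recalls it from \cite[Theorem 2.1]{HL}, where the well-posedness is obtained by the same scheme you describe --- freeze the measure flow, solve the resulting classical monotone SPDE in the variational framework of \cite{LR1}, and close the loop by a Banach fixed-point (Picard iteration) argument on $C([0,T];\mathscr{P}_2(H))$ with the Wasserstein metric, the McKean--Vlasov term being absorbed by Gronwall exactly as in your estimate. Your two flagged technical points (uniform-in-$t$ control of $\mu_t(\|\cdot\|_H^2)$ and the It\^{o} formula for $\|\cdot\|_H^2$ applied to the difference of two variational solutions) are indeed the only places requiring care, and both are handled by the cited classical theory.
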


\subsection{Large deviation principle}
In this article, we consider the following MVSPDE with small Gaussian noise,
\begin{equation}\label{e2}
dX_t^\varepsilon=A(t,X_t^\varepsilon,\mathscr{L}_{X_t^\varepsilon})dt+\sqrt{\varepsilon}B(t,X_t^\varepsilon,\mathscr{L}_{X_t^\varepsilon})dW_t,~X_0^\varepsilon=x\in H,
\end{equation}
 where $\varepsilon>0$. According to Proposition \ref{p1}, there exists a unique strong solution $\{X^\varepsilon\}$ of Eq.~(\ref{e2}) taking values in $C([0,T];H)\cap L^{\alpha}([0,T];V)$.

We first recall some definitions and classical results with respect to the theory of
LDP. Let $\{X^\varepsilon\}$ denote a family of
random elements defined on a probability space
$(\Omega,\mathscr{F},\mathbb{P})$ taking values in a Polish
space $E$. The LDP mainly
describes the exponential decay of remote tails of a family of probability distributions, and the rate of
such exponential decay is characterized by the ``rate function".

\begin{definition}(Rate function) A function $I: E\to [0,+\infty)$ is called
a rate function if $I$ is lower semicontinuous. Moreover, a rate function $I$
is called a {\it good rate function} if  the level set $\{x\in E: I(x)\le
K\}$ is compact for each constant $K<\infty$.
\end{definition}

\begin{definition}(Large deviation principle) The random variable family
 $\{X^\varepsilon\}$ is said to satisfy
 the LDP on $E$ with rate function
 $I$ if  the following lower and upper bound conditions hold,

(i) (Lower bound) for any open set $G\subset E$,
$$\liminf_{\varepsilon\to 0}
   \varepsilon \log \mathbb{P}(X^{\varepsilon}\in G)\geq -\inf_{x\in G}I(x).$$

(ii) (Upper bound) for any closed set $F\subset E$,
$$ \limsup_{\varepsilon\to 0}
   \varepsilon \log \mathbb{P}(X^{\varepsilon}\in F)\leq
  -\inf_{x\in F} I(x).
$$
\end{definition}

Now we recall the equivalence between
the LDP and the Laplace principle that is defined by the following (cf. \cite{C1,DE,DZ}).

\begin{definition}\label{d2}(Laplace principle) $\{X^\varepsilon\}$ is
said to satisfy the Laplace principle on $E$ with a rate function
$I$ if for each bounded continuous real-valued function $h$ defined
on $E$, we have
$$\lim_{\varepsilon\to 0}\varepsilon \log \mathbb{E}\left\lbrace
 \exp\left[-\frac{1}{\varepsilon} h(X^{\varepsilon})\right]\right\rbrace
= -\inf_{x\in E}\left\{h(x)+I(x)\right\}.$$
\end{definition}

It is well-known that if $E$ is a Polish space and $I$ is a good rate function, then the
LDP and Laplace principle are equivalent according to Varadhan's Lemma \cite{V1} and Bryc's converse \cite{DZ}.

In this paper, we will choose the path space $E:=C([0,T];H)$. Let
$$\mathcal{A}=\left\lbrace \phi: \phi\  \text{is  $U$-valued
 $\mathscr{F}_t$-predictable process and}\
  \int_0^T\|\phi_s(\omega)\|^2_U\d s<\infty \  \mathbb{P}\text{-}a.s.\right\rbrace, $$
  and
$$S_M=\left\lbrace \phi\in L^2([0,T], U):
\int_0^T\|\phi_s\|^2_{U}  ds\leq M
 \right\rbrace.$$
It is clear that $S_M$ endowed with the weak topology is a Polish space (here and in the sequel, we
 always consider the weak topology on $S_M$). We also define
$$\mathcal{A}_M=\left\{\phi\in\mathcal{A}: \phi_{\cdot}(\omega)\in S_M, ~\mathbb{P}\text{-}a.s.\right\}.$$

For any fixed $\bar{\mu}_{\cdot}\in C([0,T];\mathscr{P}(H))$, we define the following reference SPDE on $H$,
\begin{equation}\label{e3}
\left\{ \begin{aligned}
&dY_t=A(t,Y_t,\bar{\mu}_t)dt+B(t,Y_t,\bar{\mu}_t)dW_t,~0\leq t\leq T,\\
&Y_0=y\in H.
\end{aligned} \right.
\end{equation}

The definition of solutions to Eq.~(\ref{e3}) is given as follows.
\begin{definition}\label{d3}
For any fixed $\bar{\mu}_{\cdot}\in C([0,T];\mathscr{P}(H))$, we call a continuous $H$-valued $(\mathscr{F}_t)_{t\geq 0}$-adapted process $\{Y_t\}_{t\in[0,T]}$ is a solution of Eq.~(\ref{e3}), if for its $dt\times \mathbb{P}$-equivalent class $\hat{Y}$
\begin{eqnarray*}
\hat{Y}\in L^\alpha\big([0,T]\times\Omega,dt\times\mathbb{P};V\big)\cap L^2\big([0,T]\times\Omega,dt\times\mathbb{P};H\big),
\end{eqnarray*}
where $\alpha$ is the same as defined in $({\mathbf{H}}{\mathbf{2}})$ and $\mathbb{P}$-a.s.
\begin{eqnarray*}
Y_t=Y_0+\int_0^t A(s,\bar{Y}_s,\bar{\mu}_s)ds+\int_0^t B(s,\bar{Y}_s,\bar{\mu}_s)dW_s,~t\in[0,T],
\end{eqnarray*}
here $\bar{Y}$ is an $V$-valued progressively measurable $dt\times\mathbb{P}$-version of $\hat{Y}$.
\end{definition}

Following from the classical well-posedness result (cf.~\cite[Theorem 4.2.5]{LR1}), Eq.~(\ref{e3}) admits the existence of unique strong solution due to the assumptions  (\textbf{H1})-(\textbf{H4}), hence the pathwise uniqueness holds.

Let us state the following important lemmas, which give a variational representation formula for measurable functionals of Wiener process with respect to the distribution dependent case and play an essential role in proving the LDP.
\begin{lemma} (\cite[Theorem 3.6]{LSZZ})\label{l1}
For any $\bar{\mu}_{\cdot}\in C([0,T];\mathscr{P}(H))$, suppose that the pathwise uniqueness for Eq.~(\ref{e3}) holds. Then there exists a map $\mathcal{G}_{\bar{\mu}}:C([0,T];U_1)\to E$ such that the solution is given by
$$Y_{\cdot}=\mathcal{G}_{\bar{\mu}}(W_{\cdot}).$$
Moreover, for any $\phi\in \mathcal{A}_M$, define
$$Y^{\phi}_{\cdot}:=\mathcal{G}_{\bar{\mu}}\Big(W_{\cdot}+\int_0^{\cdot}\phi_sds\Big),$$
then

(i) $Y^{\phi}=\{Y^{\phi}_t,~t\in[0,T]\}$ is an $\mathscr{F}_t$-adapted process with paths in $E$.

(ii) process $Y^{\phi}$ fulfills the following equation
\begin{eqnarray*}
Y^{\phi}_t=y+\int_0^tA(s,Y^{\phi}_s,\bar{\mu}_s)ds+\int_0^t B(s,Y^{\phi}_s,\bar{\mu}_s)dW_s+\int_0^t B(s,Y^{\phi}_s,\bar{\mu}_s)\phi_sds,~t\in[0,T],~\mathbb{P}\text{-a.s.}.
\end{eqnarray*}
\end{lemma}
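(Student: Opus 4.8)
The statement combines the Yamada--Watanabe theorem with a Girsanov transformation, carried out in the variational SPDE framework. Since $\bar\mu$ is frozen, Eq.~(\ref{e3}) is a classical (non-distribution-dependent) SPDE in the Gelfand triple $V\subset H\subset V^*$: the hypotheses (\textbf{H1})--(\textbf{H4}) then reduce to the standard demicontinuity, coercivity, monotonicity and growth conditions of the variational framework, so that --- as recorded right after Definition~\ref{d3}, via \cite[Theorem 4.2.5]{LR1} --- Eq.~(\ref{e3}) possesses a unique strong solution; in particular, weak existence and pathwise uniqueness both hold, which is exactly the input for the infinite-dimensional Yamada--Watanabe theorem.

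I would then invoke that theorem to produce the map, the essential point being its \emph{universality}: there is a Borel map $\mathcal{G}_{\bar\mu}:C([0,T];U_1)\to E$ such that on \emph{any} complete filtered probability space carrying a $U$-cylindrical Wiener process $\tilde W$, the unique strong solution of Eq.~(\ref{e3}) (with the same $\bar\mu$ and initial datum $y$) equals $\mathcal{G}_{\bar\mu}(\tilde W)$ almost surely, and moreover $\mathcal{G}_{\bar\mu}(w)_t$ depends only on $w|_{[0,t]}$. Applied on the original basis this gives $Y_\cdot=\mathcal{G}_{\bar\mu}(W_\cdot)$; uniqueness of $\mathcal{G}_{\bar\mu}$ (up to the Wiener measure, which has full support on $C([0,T];U_1)$) is immediate from pathwise uniqueness, and the built-in adaptedness of $\mathcal{G}_{\bar\mu}$ will yield claim (i).

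For the shifted noise, fix $\phi\in\mathcal{A}_M$ and set $\hat W_\cdot:=W_\cdot+\int_0^\cdot\phi_s\,ds$. Since $\int_0^T\|\phi_s\|_U^2\,ds\le M$ $\mathbb{P}$-a.s., the exponential process $\mathcal{E}_t:=\exp\!\big(-\int_0^t\langle\phi_s,dW_s\rangle_U-\tfrac12\int_0^t\|\phi_s\|_U^2\,ds\big)$ is a genuine martingale by Novikov's criterion, so $d\mathbb{Q}:=\mathcal{E}_T\,d\mathbb{P}$ defines a probability measure equivalent to $\mathbb{P}$ under which, by Girsanov's theorem for cylindrical Wiener processes, $\hat W$ is a $U$-cylindrical $(\mathscr{F}_t)$-Wiener process. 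Applying the universality of $\mathcal{G}_{\bar\mu}$ on the basis $(\Omega,\mathscr{F},\mathscr{F}_t,\mathbb{Q})$, the process $Y^\phi_\cdot=\mathcal{G}_{\bar\mu}(\hat W_\cdot)$ is the strong solution of Eq.~(\ref{e3}) driven by $\hat W$, i.e.\ $\mathbb{Q}$-a.s.
$$Y^\phi_t=y+\int_0^tA(s,Y^\phi_s,\bar\mu)\,ds+\int_0^tB(s,Y^\phi_s,\bar\mu)\,d\hat W_s,\qquad t\in[0,T].$$
Substituting $d\hat W_s=dW_s+\phi_s\,ds$ and using $\mathbb{Q}\sim\mathbb{P}$ turns this into the controlled equation in (ii), valid $\mathbb{P}$-a.s.; $\mathscr{F}_t$-adaptedness of $Y^\phi$ follows from that of $\hat W$ together with the filtration-compatibility of $\mathcal{G}_{\bar\mu}$, while $Y^\phi\in C([0,T];H)$ with the stated integrability is inherited from the a priori estimates for Eq.~(\ref{e3}) on $(\Omega,\mathscr{F},\mathscr{F}_t,\mathbb{Q})$ and carried back to $\mathbb{P}$ by equivalence.

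The main obstacle is the rigorous Yamada--Watanabe theorem at this level of generality, and in particular the universality of $\mathcal{G}_{\bar\mu}$ across stochastic bases --- precisely what licenses evaluating $\mathcal{G}_{\bar\mu}$ at the Girsanov-shifted noise. This involves the canonical-space construction (realizing $W$ on $C([0,T];U_1)$, since a $U$-cylindrical Wiener process is not $U$-valued) and a measurable selection of the solution. A minor technical point is the Girsanov theorem for cylindrical noise, which one handles through the series $\langle\phi_s,dW_s\rangle_U=\sum_k\langle\phi_s,e_k\rangle_U\,d\beta^k_s$ in an orthonormal basis $(e_k)$ of $U$ with independent real Brownian motions $\beta^k$; the bound $\phi\in S_M$ makes all relevant exponential moments finite, so this stays routine. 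The remaining verification of (i) and (ii) is bookkeeping.
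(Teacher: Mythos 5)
Note that the paper itself gives no proof of this lemma: it is imported wholesale as \cite[Theorem 3.6]{LSZZ} and used as a black box, so there is no in-paper argument to compare yours against. Your reconstruction --- well-posedness of the frozen-measure equation from $({\mathbf{H}}{\mathbf{1}})$--$({\mathbf{H}}{\mathbf{4}})$ via \cite[Theorem 4.2.5]{LR1}, the infinite-dimensional Yamada--Watanabe theorem yielding a universal, adapted solution map $\mathcal{G}_{\bar\mu}$, then Girsanov with Novikov's condition (automatic since $\phi\in\mathcal{A}_M$ gives a bounded exponent) to identify $\mathcal{G}_{\bar\mu}\big(W_\cdot+\int_0^\cdot\phi_s\,ds\big)$ as the solution of the controlled equation under the equivalent measure and hence $\mathbb{P}$-a.s. --- is precisely the standard argument behind such variational-representation transfer results, and it is sound modulo the two technical points you yourself flag (universality of the Yamada--Watanabe map across stochastic bases, and invariance of the stochastic integrals under the equivalent measure change). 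Your sketch is in fact a bit more than bookkeeping in this context, since the cited theorem in \cite{LSZZ} is formulated for finite-dimensional McKean--Vlasov SDEs, so an adaptation along the lines you outline (Yamada--Watanabe in the Gelfand-triple setting plus cylindrical Girsanov) is what actually licenses invoking it for the SPDE (\ref{e3}).
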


The above lemma immediately leads to the following result.
\begin{lemma} \label{l2}
Assume that $X$ is a solution to Eq.~(\ref{e1}) with initial value $X_0=x$ and the pathwise uniqueness is satisfied for Eq.~(\ref{e3}) with $y=x$ and $\bar{\mu}_t=\mathscr{L}_{X_t}$,~$t\in[0,T]$. Then $X_\cdot=\mathcal{G}_{\mathscr{L}_{X}}(W_\cdot)$, where $\mathcal{G}_{\mathscr{L}_{X}}$ is defined in Lemma \ref{l1} with $\bar{\mu}=\mathscr{L}_{X}$.
Moreover, for any $\phi\in \mathcal{A}_M$, define
$$X^{\phi}_\cdot:=\mathcal{G}_{\mathscr{L}_{X}}\Big(W_{\cdot}+\int_0^{\cdot}\phi_sds\Big),$$
then

(i) $X^{\phi}=\{X^{\phi}_t,~t\in[0,T]\}$ is an $\mathscr{F}_t$-adapted process with paths in $E$.

(ii) process $X^{\phi}$ fulfills the following equation
\begin{eqnarray*}
X^{\phi}_t=x+\int_0^tA(s,X^{\phi}_s,\mathscr{L}_{X_s})ds+\int_0^t B(s,X^{\phi}_s,\mathscr{L}_{X_s})dW_s+\int_0^t B(s,X^{\phi}_s,\mathscr{L}_{X_s})\phi_sds,~\mathbb{P}\text{-a.s.}.
\end{eqnarray*}
\end{lemma}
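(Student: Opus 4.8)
The statement is essentially a corollary of Lemma \ref{l1}: the point is that a McKean--Vlasov solution $X$ of (\ref{e1}) is, once its own marginal law is ``frozen'', nothing but a solution of the decoupled reference equation (\ref{e3}), so that the factorization and the controlled equation both transfer verbatim. The plan is as follows.

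First I would take the solution $X$ of (\ref{e1}) with $X_0=x$ furnished by Proposition \ref{p1}, and regard the family of time-marginals $\mathscr{L}_X=(\mathscr{L}_{X_t})_{t\in[0,T]}$ as a fixed deterministic datum. Comparing Definition \ref{d1} with Definition \ref{d3}, I would observe that this very process $X$ is a solution of the reference equation (\ref{e3}) with $y=x$ and $\bar{\mu}=\mathscr{L}_X$: substituting $\bar{\mu}_t=\mathscr{L}_{X_t}$ into (\ref{e3}) reproduces exactly the integral identity and the integrability requirement satisfied by $X$. Since we are assuming pathwise uniqueness for (\ref{e3}) with $y=x$ and $\bar{\mu}=\mathscr{L}_X$, all hypotheses of Lemma \ref{l1} hold with this choice of reference measure.

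Next I would invoke Lemma \ref{l1} directly. It provides a measurable map $\mathcal{G}_{\mathscr{L}_X}\colon C([0,T];U_1)\to E$ such that the unique solution of (\ref{e3}) with $\bar\mu=\mathscr{L}_X$, $y=x$ equals $\mathcal{G}_{\mathscr{L}_X}(W_\cdot)$; by the identification just made, that unique solution is $X$ itself, whence $X_\cdot=\mathcal{G}_{\mathscr{L}_X}(W_\cdot)$. For $\phi\in\mathcal{A}_M$, setting $X^\phi_\cdot:=\mathcal{G}_{\mathscr{L}_X}\big(W_\cdot+\int_0^\cdot\phi_s\,\d s\big)$, part (i) of Lemma \ref{l1} gives that $X^\phi$ is $\mathscr{F}_t$-adapted with paths in $E$, and part (ii) of Lemma \ref{l1}, with $\bar\mu$ replaced throughout by $\mathscr{L}_{X_s}$, yields precisely the claimed controlled equation for $X^\phi$.

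As for the main obstacle: since Lemma \ref{l1} and the pathwise uniqueness for (\ref{e3}) are taken as given, nothing deep remains, and the only point needing care is the bookkeeping that freezing $\bar\mu=\mathscr{L}_X$ genuinely turns (\ref{e1}) into an instance of (\ref{e3}) to which Lemma \ref{l1} literally applies --- in particular that the frozen marginals $(\mathscr{L}_{X_t})_{t\in[0,T]}$ form an admissible reference datum, which is guaranteed by the a priori bound $\sup_{t\in[0,T]}\mathscr{L}_{X_t}(\|\cdot\|_H^2)=\sup_{t\in[0,T]}\mathbb{E}\|X_t\|_H^2<\infty$ from Proposition \ref{p1}. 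No new estimates are required.
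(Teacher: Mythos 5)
Your proposal is correct and takes essentially the same route as the paper: the paper gives no separate argument for Lemma \ref{l2}, stating only that it ``immediately'' follows from Lemma \ref{l1}, which is exactly the freeze-the-marginals, observe-that-$X$-solves-(\ref{e3}), then apply-Lemma-\ref{l1} reasoning you spell out. Your extra remark that Proposition \ref{p1} guarantees the frozen marginals lie in $\mathscr{P}_2(H)$ is a harmless and sensible addition, not a deviation.
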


Hence, for the Eq.~(\ref{e2}) there exists a map
$$\mathcal{G}^\varepsilon:=\mathcal{G}_{\mathscr{L}_{X^\varepsilon}},$$
where the right-hand side is from Lemma \ref{l2}, such that $X^\varepsilon_\cdot=\mathcal{G}^\varepsilon(\sqrt{\varepsilon}W_{\cdot})$. Furthermore, for each $\phi^\varepsilon\in \mathcal{A}_M$, define
$$X^{\varepsilon,\phi^\varepsilon}_\cdot:=\mathcal{G}^\varepsilon\Big(\sqrt{\varepsilon}W_{\cdot}+\int_0^{\cdot}\phi^\varepsilon_sds\Big),$$
then process $X^{\varepsilon,\phi^\varepsilon}$ solves the following controlled equation $\mathbb{P}$-a.s.
\begin{eqnarray}\label{e5}
X^{\varepsilon,\phi^\varepsilon}_t=\!\!\!\!\!\!\!\!&&x+\int_0^tA(s,X^{\varepsilon,\phi^\varepsilon}_s,\mathscr{L}_{X^{\varepsilon}_s})ds+\int_0^t B(s,X^{\varepsilon,\phi^\varepsilon}_s,\mathscr{L}_{X^{\varepsilon}_s})\phi^\varepsilon_sds
\nonumber\\
\!\!\!\!\!\!\!\!&&+\sqrt{\varepsilon}\int_0^t B(s,X^{\varepsilon,\phi^\varepsilon}_s,\mathscr{L}_{X^{\varepsilon}_s})dW_s.
\end{eqnarray}

We now formulate the following sufficient condition established in \cite{LSZZ} (cf. also \cite{MSZ}) for the Laplace
 principle (equivalently, the LDP) of $X^\varepsilon$ as
 $\varepsilon\rightarrow0$, which is a modified form of \cite{BD,BDM} and is convenient in some applications.

\vspace{1mm}
\textbf{Condition (A)}: There exists a measurable map $\mathcal{G}^0: C([0,T];
U_1)\rightarrow E$ for which the following two conditions hold:

(i) Let $\{\phi^\varepsilon: \varepsilon>0\}\subset \mathcal{A}_M$ for
any $M<\infty$. For any $\delta>0$,
$$\lim_{\varepsilon\to 0}\mathbb{P}\Big(d\Big(\mathcal{G}^\varepsilon\big(\sqrt{\varepsilon}W_{\cdot}+\int_0^{\cdot}\phi^\varepsilon_sds\big),\mathcal{G}^0\big(\int_0^{\cdot}\phi^\varepsilon_sds\big)\Big)>\delta\Big)=0, $$
where $d(\cdot,\cdot)$ denotes the metric on the path space $E$.

(ii) Let $\{\phi^n: n\in\mathbb{N}\}\subset S_M$ for any $M<\infty$ such that $\phi^n$ converges to element $\phi$ in $S_M$ as $n\to\infty$, then
$\mathcal{G}^0\big(\int_0^{\cdot}\phi^n_sds\big)$ converges to $\mathcal{G}^0\big(\int_0^{\cdot}\phi_sds\big)$ in the space $E$.

\begin{lemma}\label{l3}\cite[Theorem 4.4]{LSZZ}  If
$X^\varepsilon_\cdot=\mathcal{G}^\varepsilon(\sqrt{\varepsilon}W_{\cdot})$ and \textbf{Condition (A)}
holds, then $\{X^\varepsilon\}$ satisfies the Laplace
principle (hence LDP) on $E$ with the good
rate function $I$ given by
\begin{equation}\label{rf}
I(f)=\inf_{\left\{\phi\in L^2([0,T]; U):\  f=\mathcal{G}^0(\int_0^\cdot
\phi_sds)\right\}}\left\lbrace\frac{1}{2}
\int_0^T\|\phi_s\|_U^2ds \right\rbrace,
\end{equation}
where infimum over an empty set is taken as $+\infty$.
\end{lemma}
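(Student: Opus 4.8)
Since the law flow $t\mapsto\mathscr{L}_{X^\varepsilon_t}$ is deterministic for each fixed $\varepsilon>0$, the map $\mathcal{G}^\varepsilon_{\mathscr{L}_{X^\varepsilon}}:C([0,T];U_1)\to E$ supplied by Lemma \ref{l2} is a \emph{fixed} Borel map, and $X^\varepsilon=\mathcal{G}^\varepsilon_{\mathscr{L}_{X^\varepsilon}}(\sqrt{\varepsilon}W)$ is an ordinary measurable functional of the cylindrical Wiener process; hence the distribution dependence of (\ref{e2}) is entirely absorbed into the hypothesis \textbf{Condition (A)}, and the remaining argument is the Budhiraja--Dupuis weak convergence scheme. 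The plan is to establish the Laplace principle with the functional $I$ of (\ref{rf}) and then invoke its equivalence with the LDP (Varadhan's lemma and Bryc's converse, valid since $E=C([0,T];H)$ is Polish). For a bounded continuous $h:E\to\mathbb{R}$ write $F^\varepsilon(h):=-\varepsilon\log\mathbb{E}[\exp(-h(X^\varepsilon)/\varepsilon)]$; by the variational representation for functionals of a cylindrical Wiener process (\cite{BD,BDM}) together with the substitution $\phi=\sqrt{\varepsilon}v$,
\[
F^\varepsilon(h)=\inf_{\phi\in\mathcal{A}}\mathbb{E}\Big[\tfrac12\int_0^T\|\phi_s\|_U^2\,ds+h\big(\mathcal{G}^\varepsilon_{\mathscr{L}_{X^\varepsilon}}(\sqrt{\varepsilon}W_\cdot+\int_0^\cdot\phi_s\,ds)\big)\Big],
\]
so it remains to prove $F^\varepsilon(h)\to\inf_{f\in E}\{h(f)+I(f)\}$ as $\varepsilon\to0$.

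First I would record that $I$ is a good rate function. Since $S_M$ with the weak topology is compact, \textbf{Condition (A)}(ii) is precisely the continuity of $\phi\mapsto\mathcal{G}^0(\int_0^\cdot\phi_s\,ds)$ from $S_M$ to $E$; using the weak sequential compactness of $S_M$ and the weak lower semicontinuity of the $L^2$-norm, one checks that $\{f\in E:I(f)\le K\}$ equals the continuous image $\{\mathcal{G}^0(\int_0^\cdot\phi_s\,ds):\phi\in S_{2K}\}$, which is therefore compact; in particular $I$ is lower semicontinuous.

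Next, to prove $\limsup_{\varepsilon\to0}F^\varepsilon(h)\le\inf_{f}\{h(f)+I(f)\}$, fix $\eta>0$, pick $f_0$ with $h(f_0)+I(f_0)\le\inf_{f}\{h(f)+I(f)\}+\eta$ and a deterministic $\phi\in L^2([0,T];U)$ with $f_0=\mathcal{G}^0(\int_0^\cdot\phi_s\,ds)$ and $\frac12\int_0^T\|\phi_s\|_U^2\,ds\le I(f_0)+\eta$. Inserting this (deterministic, hence admissible) control into the representation gives $F^\varepsilon(h)\le\frac12\int_0^T\|\phi_s\|_U^2\,ds+\mathbb{E}[h(X^{\varepsilon,\phi})]$ with $X^{\varepsilon,\phi}=\mathcal{G}^\varepsilon_{\mathscr{L}_{X^\varepsilon}}(\sqrt{\varepsilon}W_\cdot+\int_0^\cdot\phi_s\,ds)$; by \textbf{Condition (A)}(i) applied to the constant family $\phi^\varepsilon\equiv\phi$, $X^{\varepsilon,\phi}\to f_0$ in probability in $E$, and since $h$ is bounded and continuous, $\mathbb{E}[h(X^{\varepsilon,\phi})]\to h(f_0)$; letting $\varepsilon\to0$ and then $\eta\to0$ closes this inequality.

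For the reverse inequality $\liminf_{\varepsilon\to0}F^\varepsilon(h)\ge\inf_{f}\{h(f)+I(f)\}$, choose for each $\varepsilon$ an $\varepsilon$-optimal $\phi^\varepsilon\in\mathcal{A}$ in the representation; from $F^\varepsilon(h)\le\|h\|_\infty$ one gets $\mathbb{E}[\frac12\int_0^T\|\phi^\varepsilon_s\|_U^2\,ds]\le 2\|h\|_\infty+1$ for small $\varepsilon$. To force the controls into some $\mathcal{A}_N$ I would localize: set $\tau_N^\varepsilon:=\inf\{t:\frac12\int_0^t\|\phi^\varepsilon_s\|_U^2\,ds\ge N\}\wedge T$ and $\psi^\varepsilon_s:=\phi^\varepsilon_s\mathbf{1}_{[0,\tau_N^\varepsilon]}(s)\in\mathcal{A}_N$; by pathwise uniqueness of the controlled equation (\ref{e5}) the processes $X^{\varepsilon,\psi^\varepsilon}$ and $X^{\varepsilon,\phi^\varepsilon}$ coincide up to $\tau_N^\varepsilon$, and a Chebyshev bound for $\mathbb{P}(\tau_N^\varepsilon<T)$ shows that passing from $\phi^\varepsilon$ to $\psi^\varepsilon$ worsens the cost by at most $C_h/N$, uniformly in small $\varepsilon$, so $F^\varepsilon(h)\ge\mathbb{E}[\frac12\int_0^T\|\psi^\varepsilon_s\|_U^2\,ds+h(X^{\varepsilon,\psi^\varepsilon})]-\varepsilon-C_h/N$. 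Then \textbf{Condition (A)}(i) for the family $\{\psi^\varepsilon\}\subset\mathcal{A}_N$, together with the compactness of $S_N$ (which by \textbf{Condition (A)}(ii) confines $Z^\varepsilon:=\mathcal{G}^0(\int_0^\cdot\psi^\varepsilon_s\,ds)$ to a compact subset of $E$) and the bounded continuity of $h$, allows one to replace $h(X^{\varepsilon,\psi^\varepsilon})$ by $h(Z^\varepsilon)$ up to an error vanishing as $\varepsilon\to0$; since for each $\omega$ the control $\psi^\varepsilon(\omega)$ is admissible in the infimum defining $I(Z^\varepsilon(\omega))$, we have the $\mathbb{P}$-a.s. pointwise bound $\frac12\int_0^T\|\psi^\varepsilon_s\|_U^2\,ds+h(Z^\varepsilon)\ge I(Z^\varepsilon)+h(Z^\varepsilon)\ge\inf_{f}\{h(f)+I(f)\}$; taking expectations, letting $\varepsilon\to0$ and then $N\to\infty$ finishes the proof of the Laplace principle, and the LDP with the same good rate function $I$ follows by equivalence. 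I expect the localization here --- keeping the nearly optimal controls inside $\mathcal{A}_N$ while simultaneously controlling the perturbation of the controlled process and transferring from $X^{\varepsilon,\psi^\varepsilon}$ to the skeleton $Z^\varepsilon$ for a \emph{random} family of controls --- to be the only genuinely delicate step; everything else is bookkeeping once \textbf{Condition (A)}, Proposition \ref{p1} and Lemma \ref{l2} are in hand.
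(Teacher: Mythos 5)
The paper does not prove this lemma at all: it is imported verbatim as \cite[Theorem 4.4]{LSZZ} (the distribution-dependent analogue of the Budhiraja--Dupuis--Maroulas criterion, in the modified form of \cite{MSZ}), so there is no in-paper argument to compare against. Your sketch is, in substance, a correct reconstruction of the proof that the cited reference gives. You correctly isolate the one point where the McKean--Vlasov structure matters: for fixed $\varepsilon$ the law flow $t\mapsto\mathscr{L}_{X^\varepsilon_t}$ is deterministic, so $\mathcal{G}^\varepsilon_{\mathscr{L}_{X^\varepsilon}}$ is a fixed measurable map of the Wiener path and the Bou\'e--Dupuis/Budhiraja--Dupuis variational representation applies with the frozen law inside the controlled equation (consistent with (\ref{e5})); after that the scheme is standard. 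Your upper bound (deterministic near-optimal control plus \textbf{Condition (A)}(i) with $\phi^\varepsilon\equiv\phi$), your identification of the level sets of $I$ as continuous images of $S_{2K}$ via \textbf{Condition (A)}(ii), weak compactness and weak lower semicontinuity of the energy, and your lower bound (truncation of $\varepsilon$-optimal controls into $\mathcal{A}_N$ at cost $O(\|h\|_\infty/N)$, then the pointwise inequality $\tfrac12\int_0^T\|\psi^\varepsilon_s\|_U^2\,ds+h(Z^\varepsilon)\ge I(Z^\varepsilon)+h(Z^\varepsilon)\ge\inf_f\{h(f)+I(f)\}$) are all sound; in particular the replacement of $h(X^{\varepsilon,\psi^\varepsilon})$ by $h(Z^\varepsilon)$ is legitimate because \textbf{Condition (A)}(i) gives $d(X^{\varepsilon,\psi^\varepsilon},Z^\varepsilon)\to0$ in probability while $Z^\varepsilon$ stays in the compact set $\mathcal{G}^0\big(\{\int_0^\cdot\phi_s ds:\phi\in S_N\}\big)$, on a neighbourhood of which $h$ is uniformly continuous relative to that compact. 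A full write-up would only need to add routine points you gloss over: measurability of $\omega\mapsto Z^\varepsilon(\omega)$ (from continuity of $\mathcal{G}^0$ on $S_N$ with the weak topology), attainment of the infimum defining $I$ in the level-set identity, and the bookkeeping showing the truncation only decreases the running cost while altering the $h$-term by at most $2\|h\|_\infty\mathbb{P}(\tau_N^\varepsilon<T)$. None of these affects the validity of the argument.
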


It is well-known that $\big(C([0,T]; H),d(\cdot,\cdot)\big)$ is a Polish space with respect to the metric
\begin{equation*}\label{rho}
d(f,g):=\sup_{t\in[0,T]}\|f_t-g_t\|_{H}.
\end{equation*}

Using Proposition \ref{p1}, it is easy to derive the following proposition.
\begin{proposition}\label{p2}
Suppose that $({\mathbf{H}}{\mathbf{1}})$-$({\mathbf{H}}{\mathbf{4}})$ hold, there exists a unique function $\{X^0_t\}_{t\in[0,T]}$ such that

(i) $X^0\in C([0,T]; H)$,

(ii) $X^0$ solves the following deterministic equation
\begin{equation}\label{e6}
X^0_t=x+\int_0^tA(s,X^0_s,\mathscr{L}_{X^0_s})ds,~t\in[0,T].
\end{equation}
\end{proposition}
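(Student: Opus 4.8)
The plan is to read Eq.~(\ref{e6}) as the degenerate instance $B\equiv 0$ of Eq.~(\ref{e1}), apply Proposition \ref{p1} to obtain a solution, and then show that this a priori merely adapted solution is in fact a deterministic path. First I would observe that the pair $(A,0)$ still fulfils $(\mathbf{H1})$--$(\mathbf{H4})$: the conditions $(\mathbf{H1})$, the monotonicity half of $(\mathbf{H3})$, and $(\mathbf{H4})$ do not mention $B$, while $(\mathbf{H2})$ and the Lipschitz half of $(\mathbf{H3})$ become trivial when $B=0$. Hence Proposition \ref{p1} yields a unique (in the sense of Definition \ref{d1}) continuous, $(\mathscr{F}_t)$-adapted, $H$-valued process $X$ with $X_0=x$, $\hat X\in L^\alpha([0,T]\times\Omega;V)\cap L^2([0,T]\times\Omega;H)$, $X_t=x+\int_0^t A(s,\bar X_s,\mathscr{L}_{\bar X_s})\,ds$, and $\EE[\sup_t\|X_t\|_H^2]<\infty$. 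In particular $\mu_t:=\mathscr{L}_{X_t}$ is a \emph{deterministic} curve in $\mathscr{P}_2(H)$ with $\sup_t\mu_t(\|\cdot\|_H^2)<\infty$.

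To see that $X$ is deterministic, I would freeze the curve $(\mu_t)_{t\in[0,T]}$ and consider the noise-free, distribution-independent evolution equation $dZ_t=A(t,Z_t,\mu_t)\,dt$, $Z_0=x$. With $A_\mu(t,u):=A(t,u,\mu_t)$, the hypotheses collapse to the classical ones of the monotonicity method: $A_\mu$ is demicontinuous on $V$ by $(\mathbf{H1})$; weakly monotone, since $\mathbb{W}_{2,H}(\mu_t,\mu_t)=0$ turns $(\mathbf{H3})$ into $2\,{}_{V^*}\langle A_\mu(t,u)-A_\mu(t,v),u-v\rangle_V\le C\|u-v\|_H^2$; coercive, since $(\mathbf{H2})$ gives $2\,{}_{V^*}\langle A_\mu(t,u),u\rangle_V\le C\|u\|_H^2-\theta\|u\|_V^\alpha+(C+C\sup_s\mu_s(\|\cdot\|_H^2))$; and of the required $\tfrac{\alpha}{\alpha-1}$-growth by $(\mathbf{H4})$ together with $\sup_s\mu_s(\|\cdot\|_H^2)<\infty$. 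The deterministic monotonicity method (e.g.\ \cite[Theorem 4.2.5]{LR1} with zero diffusion) then produces a unique $Z\in C([0,T];H)\cap L^\alpha([0,T];V)$ solving it. On the other hand, for $\mathbb{P}$-a.e.\ $\omega$ the path $X_\cdot(\omega)$ lies in $C([0,T];H)\cap L^\alpha([0,T];V)$ and satisfies the \emph{same} integral equation in $V^*$; by uniqueness, $X_\cdot(\omega)=Z_\cdot$ for $\mathbb{P}$-a.e.\ $\omega$. Hence $X\equiv Z$ is deterministic, so $\mathscr{L}_{X_t}=\delta_{X_t}=\mu_t$, whence $X$ solves $X_t=x+\int_0^t A(s,X_s,\mathscr{L}_{X_s})\,ds$; set $X^0:=Z=X$, which gives (i) and (ii).

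For uniqueness I would argue directly: if $\tilde X^0\in C([0,T];H)$ is another solution of (\ref{e6}) (note that for the right-hand side to make sense one automatically has $\tilde X^0\in L^\alpha([0,T];V)$), then the deterministic It\^o formula in the Gelfand triple together with $(\mathbf{H3})$ and the identity $\mathbb{W}_{2,H}(\delta_a,\delta_b)=\|a-b\|_H$ yields
\[
\|X^0_t-\tilde X^0_t\|_H^2\le C\int_0^t\|X^0_s-\tilde X^0_s\|_H^2\,ds ,\qquad t\in[0,T],
\]
so Grönwall's inequality forces $X^0\equiv\tilde X^0$. (Alternatively, $\tilde X^0$ seen as a constant-in-$\omega$ adapted process solves Eq.~(\ref{e1}) with $B\equiv 0$, hence equals $X$ by the uniqueness in Proposition \ref{p1}.)

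The one step that is more than bookkeeping is the upgrade from ``adapted solution of the McKean--Vlasov equation with $B=0$'' to ``deterministic function'': this is why one freezes the law and invokes deterministic uniqueness instead of arguing in one shot. The algebraic fact that makes everything close is $\mathbb{W}_{2,H}(\delta_u,\delta_v)=\|u-v\|_H$, which collapses the distribution-dependent monotonicity and Lipschitz bounds into ordinary ones; and the a priori estimate $\sup_t\EE\|X_t\|_H^2<\infty$ from Proposition \ref{p1} is precisely what makes the frozen second-moment flow $t\mapsto\mu_t(\|\cdot\|_H^2)$ finite and bounded, so that the frozen equation is genuinely coercive.
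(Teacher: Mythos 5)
Your proposal is correct and follows essentially the same route as the paper, which simply specializes Proposition \ref{p1} to the noise-free case $B\equiv 0$ (the paper states this derivation as immediate and gives no further detail). Your additional care in upgrading the adapted solution to a deterministic path by freezing the law $\mu_t=\mathscr{L}_{X_t}$ and invoking deterministic uniqueness, and the use of $\mathbb{W}_{2,H}(\delta_a,\delta_b)=\|a-b\|_H$ for uniqueness, are exactly the details the paper leaves implicit.
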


We would like to mention that $X^0$ is a deterministic path and its distribution $\mathscr{L}_{X^0_t}=\delta_{X^0_t}$, here $\delta_{X^0_t}$ denotes the Dirac measure of $X^0_t$. Throughout this paper, we always use $X^0$ to denote the unique solution of Eq.~(\ref{e6}).

Now we are in the position to define the following skeleton equation
\begin{equation}\label{e4}
\frac{d\bar{X}^{\phi}_t}{dt}=A(t,\bar{X}^{\phi}_t,\mathscr{L}_{X^0_t})+B(t,\bar{X}^{\phi}_t,\mathscr{L}_{X^0_t})\phi_t,~~\bar{X}^{\phi}_0=x,
\end{equation}
where $\phi\in L^2([0,T];U)$ and $X^0$ is defined in Eq.~(\ref{e6}).
\begin{Rem}
 Note that $\mathscr{L}_{X^0_t}$ shows up in the skeleton equation \eref{e4} and is used to define the large deviation rate function instead of  $\mathscr{L}_{\bar{X}_t^\phi}$. Intuitively, as the parameter $\varepsilon$ tends to $0$ in (\ref{e2}), the noise term vanishes and we arrive the PDE (\ref{e6}). Therefore we can see that the distribution of $X^\varepsilon_t$ tends to the Dirac measure of deterministic trajectory $X^0_t$ as $\varepsilon\to 0$.

Since the measure $\mathscr{L}_{X_t^\varepsilon}$ is not random, so that the convergence of these
measures is independent of the occurrence of a rare event for the random variable $X_t^\varepsilon$. In fact, if one formulate the correct form of equation \eref{e5} for the controlled process $X_t^{\varepsilon, \phi^\varepsilon}$, then let $\varepsilon$ go to $0$, it's natural to derive the above skeleton equation \eref{e4}.
Moreover, in the work of \cite{DST}, the authors also state that the Dirac measure $\mathscr{L}_{X^0_t}$ is a good approximation of $\mathscr{L}_{X_t^\varepsilon}$, where they first replace the distribution $\mathscr{L}_{X_t^\varepsilon}$ in Eq.~(\ref{e2}) by $\mathscr{L}_{X^0_t}$, and derive the skeleton equation \eref{e4}.
Then they use some time discretization, approximation and exponential equivalence arguments to show $X_t^\varepsilon$ in Eq.~(\ref{e2}) satisfies the LDP.
Therefore using $\mathscr{L}_{X^0_t}$ in the skeleton equation to define the rate function instead of $\mathscr{L}_{\bar{X}_t^\phi}$ is reasonable.
\end{Rem}

The existence and uniqueness of solutions to Eq.~(\ref{e4}) for any $\phi\in L^2([0,T];U)$ will be proved in the next section (see Lemma \ref{l4} below).  Furthermore, Lemmas \ref{l3} and \ref{l4} allow us to define
the map $\mathcal{G}^0: C([0,T]; U_1)\rightarrow C([0,T]; H)$ by
\begin{equation}\label{g1}
\mathcal{G}^0\Big(\int_0^{\cdot}\phi_sds\Big):=\bar{X}^{\phi}_{\cdot}.
\end{equation}

Now we can state the main LDP result of this work.
\begin{theorem}\label{t1}
Assume that $({\mathbf{H}}{\mathbf{1}})$-$({\mathbf{H}}{\mathbf{5}})$ and $\int_0^T\|B(s,0,\delta_0)\|_{L_2(U,H)}^2ds<\infty$ hold. Then as $\varepsilon\to 0$, $\{X^\varepsilon:\varepsilon>0\}$
satisfies the LDP on $C([0,T]; H)$ with the
good rate function $I$ given by $(\ref{rf})$.
\end{theorem}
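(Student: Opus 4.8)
The plan is to verify \textbf{Condition (A)} for the map $\mathcal{G}^0$ from \eqref{g1} (extended off paths of the form $\int_0^\cdot\phi_s\,ds$ in some fixed measurable way), after which Theorem \ref{t1} is immediate from Lemma \ref{l3} together with the identification $X^\varepsilon_\cdot=\mathcal{G}^\varepsilon_{\mathscr{L}_{X^\varepsilon}}(\sqrt\varepsilon W_\cdot)$ recorded above. Three ingredients are needed first. (a) Well-posedness of the skeleton equation \eqref{e4} (Lemma \ref{l4}): treating $s\mapsto\mathscr{L}_{X^0_s}$ as a prescribed continuous $\mathscr{P}_2(H)$-valued path and $s\mapsto B(s,\cdot,\mathscr{L}_{X^0_s})\phi_s$ as an extra forcing, existence and uniqueness of $\bar X^\phi\in C([0,T];H)\cap L^\alpha([0,T];V)$ follow from the classical monotone-operator theory under (H1)--(H4), uniqueness coming from the monotonicity in (H3) and Gronwall. (b) Uniform a priori estimates: using (H2), (H4), the standing hypothesis $\int_0^T\|B(s,0,\delta_0)\|_{L_2(U,H)}^2\,ds<\infty$ together with (H3), Young's and Burkholder--Davis--Gundy inequalities, and Gronwall with the $L^1([0,T])$-weight $1+\|\phi^\varepsilon_\cdot\|_U$ (recall $\int_0^T\|\phi^\varepsilon_s\|_U^2\,ds\le M$ a.s.), one expects
\begin{equation*}
\sup_{\varepsilon\in(0,1]}\mathbb{E}\Big[\sup_{t\in[0,T]}\|X^{\varepsilon,\phi^\varepsilon}_t\|_H^2+\int_0^T\|X^{\varepsilon,\phi^\varepsilon}_t\|_V^\alpha\,dt\Big]<\infty,\qquad\sup_{n}\Big(\sup_{t\in[0,T]}\|\bar X^{\phi^n}_t\|_H^2+\int_0^T\|\bar X^{\phi^n}_t\|_V^\alpha\,dt\Big)<\infty .
\end{equation*}
(c) Applying the estimate underlying Proposition \ref{p1} to $X^\varepsilon-X^0$ and using (H3) (so the measure arguments contribute only $C\mathbb{W}_{2,H}(\mathscr{L}_{X^\varepsilon_s},\mathscr{L}_{X^0_s})^2\le C\,\mathbb{E}\|X^\varepsilon_s-X^0_s\|_H^2$), a Gronwall argument gives $\rho_\varepsilon:=\sup_{t\in[0,T]}\mathbb{W}_{2,H}(\mathscr{L}_{X^\varepsilon_t},\delta_{X^0_t})^2\le\sup_{t\in[0,T]}\mathbb{E}\|X^\varepsilon_t-X^0_t\|_H^2\le C\varepsilon\to0$.

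For part (i) of \textbf{Condition (A)}, fix $\{\phi^\varepsilon\}\subset\mathcal{A}_M$ and set $\Delta^\varepsilon:=X^{\varepsilon,\phi^\varepsilon}-\bar X^{\phi^\varepsilon}$; by \eqref{e5} and \eqref{e4} this has zero initial value and drift/diffusion increments $A(s,X^{\varepsilon,\phi^\varepsilon}_s,\mathscr{L}_{X^\varepsilon_s})-A(s,\bar X^{\phi^\varepsilon}_s,\mathscr{L}_{X^0_s})$ and $B(s,X^{\varepsilon,\phi^\varepsilon}_s,\mathscr{L}_{X^\varepsilon_s})-B(s,\bar X^{\phi^\varepsilon}_s,\mathscr{L}_{X^0_s})$. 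The key point is that (H3) applies verbatim to these increments even though the measure arguments $\mathscr{L}_{X^\varepsilon_s}$ and $\mathscr{L}_{X^0_s}$ differ, producing only the harmless additive term $C\rho_\varepsilon$. Hence Itô's formula for $\|\Delta^\varepsilon_t\|_H^2$, Young's inequality for the $\phi^\varepsilon$-term, the BDG inequality for the $\sqrt\varepsilon$-stochastic integral (whose bracket is controlled by the a priori bound on $\mathbb{E}\int_0^T\|B(s,X^{\varepsilon,\phi^\varepsilon}_s,\mathscr{L}_{X^\varepsilon_s})\|_{L_2(U,H)}^2\,ds$), and Gronwall with weight $C(1+\|\phi^\varepsilon_\cdot\|_U)$ yield $\mathbb{E}\sup_{t\in[0,T]}\|\Delta^\varepsilon_t\|_H^2\le C_M(\rho_\varepsilon+\sqrt\varepsilon+\varepsilon)\to0$, which gives (i). Here $\phi^\varepsilon$ is the \emph{same} control in the controlled equation and in the skeleton, so it never has to be passed to a limit; this makes (i) comparatively routine.

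Part (ii) of \textbf{Condition (A)} is the crux. Let $\phi^n\rightharpoonup\phi$ in $S_M$ and $Z^n:=\bar X^{\phi^n}-\bar X^\phi$. From the energy identity for $\|Z^n_t\|_H^2$, the monotonicity in (H3) (both solutions carry the \emph{same} measure $\mathscr{L}_{X^0_s}$, so no Wasserstein term appears), the Lipschitz bound on $B$, Young's inequality and Gronwall with weight $C(1+\|\phi^n_\cdot\|_U)$, one reduces the whole statement to
\begin{equation*}
\Lambda_n:=\sup_{t\in[0,T]}\Big|\int_0^t\big\langle B(s,\bar X^\phi_s,\mathscr{L}_{X^0_s})(\phi^n_s-\phi_s),\,Z^n_s\big\rangle_H\,ds\Big|\;\longrightarrow\;0\qquad(n\to\infty),
\end{equation*}
since then $\sup_{t\in[0,T]}\|Z^n_t\|_H^2\le C_M\Lambda_n\to0$. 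This is where the absence of any compact embedding $V\hookrightarrow H$ bites: one cannot invoke an Aubin--Lions type argument to obtain strong $L^2([0,T];H)$-convergence of $\bar X^{\phi^n}$ and pass to the limit in the bilinear term. Following the time-discretization idea of \cite{CM,K2}, I would fix a partition $0=t_0<\cdots<t_N=T$ with $\kappa_N(s)=t_k$ on $[t_k,t_{k+1})$ and split the integral in $\Lambda_n$ into $\sum_k\langle Z^n_{t_k},\int_{t_k}^{t_{k+1}\wedge t}B(s,\bar X^\phi_s,\mathscr{L}_{X^0_s})(\phi^n_s-\phi_s)\,ds\rangle_H$ plus a remainder $R_{n,N}(t)=\int_0^t\langle B(s,\bar X^\phi_s,\mathscr{L}_{X^0_s})(\phi^n_s-\phi_s),\,Z^n_s-Z^n_{\kappa_N(s)}\rangle_H\,ds$. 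In the first sum, since $B$ is Hilbert--Schmidt valued and $\int_0^T\|B(s,\bar X^\phi_s,\mathscr{L}_{X^0_s})\|_{L_2(U,H)}^2\,ds<\infty$ by (b) and (H3), each operator $\psi\mapsto\int_{t_k}^{t_{k+1}}B(s,\bar X^\phi_s,\mathscr{L}_{X^0_s})\psi_s\,ds$ from $L^2([t_k,t_{k+1}];U)$ to $H$ is compact, so $\phi^n-\phi\rightharpoonup0$ forces the finitely many inner integrals to $0$ strongly in $H$ uniformly in $t$, while $\sup_n\|Z^n_{t_k}\|_H<\infty$; thus for each fixed $N$ the first sum tends to $0$ as $n\to\infty$. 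The remainder $R_{n,N}$ is bounded uniformly in $n$ by a quantity $\omega(|\pi_N|)$ with $\omega(\delta)\to0$ as $\delta\to0$, using (H5) to freeze the time variable in $B$ on each subinterval, the hypothesis $\int_0^T\|B(s,0,\delta_0)\|_{L_2(U,H)}^2\,ds<\infty$ and the absolute continuity of this integral to control the $H$-increments coming from the $B$-term of $\bar X^{\phi^n}$, and the uniform $L^\alpha([0,T];V)$-bounds from (b) to control those coming from the $A$-term. Sending first $n\to\infty$ and then $|\pi_N|\to0$ gives $\Lambda_n\to0$, hence (ii). I expect this estimate on $R_{n,N}$ to be the main technical obstacle; it is also the only place where the time-regularity hypothesis (H5) and the $L^2$-integrability of $B(\cdot,0,\delta_0)$ are genuinely needed, and where the lack of compactness of the Gelfand triple must be circumvented by hand. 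Once (i) and (ii) are verified, Lemma \ref{l3} delivers the LDP on $C([0,T];H)$ with the good rate function \eqref{rf}.
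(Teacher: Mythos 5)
Your proposal is correct in outline and follows the same architecture as the paper: well-posedness and uniform estimates for the skeleton equation (the paper's Lemma \ref{l4}), the estimate $\sup_{t}\mathbb{E}\|X^\varepsilon_t-X^0_t\|_H^2\le C\varepsilon$ controlling the Wasserstein distance between $\mathscr{L}_{X^\varepsilon_t}$ and $\delta_{X^0_t}$ (Lemma \ref{l5}), Condition (A)(i) by It\^o's formula, (H3) and Gronwall with the $S_M$-weight (Theorem \ref{t2}), and Condition (A)(ii) reduced to the bilinear term $\Lambda_n$ and attacked by time discretization to compensate for the absence of a compact embedding (Theorem \ref{t3}, with your remainder estimate being exactly the paper's Lemma \ref{l6}). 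The one genuinely different ingredient is how you kill the discretized main term: you keep $B(s,\bar X^\phi_s,\mathscr{L}_{X^0_s})$ time-varying and use that $\psi\mapsto\int_{t_k}^{t_{k+1}}B(s,\bar X^\phi_s,\mathscr{L}_{X^0_s})\psi_s\,ds$ is Hilbert--Schmidt (its squared HS norm is $\int_{t_k}^{t_{k+1}}\|B(s,\bar X^\phi_s,\mathscr{L}_{X^0_s})\|_{L_2(U,H)}^2ds$), hence compact from $L^2([t_k,t_{k+1}];U)$ to $H$, so weak convergence of $\phi^n-\phi$ suffices; the paper instead freezes $B$ at the grid points $(k\delta,\bar X^\phi_{k\delta},\mathscr{L}_{X^0_{k\delta}})$ — at the cost of the two extra correction terms $\widetilde I_2,\widetilde I_3$, which is precisely where (H5) and the increment Lemma \ref{l6} enter — and then applies the fixed compact operator $B(k\delta,\bar X^\phi_{k\delta},\mathscr{L}_{X^0_{k\delta}})$ to the weakly null vectors $\int_{k\delta}^{(k+1)\delta}(\phi^n_s-\phi_s)ds$. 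Your variant is arguably cleaner and, as written, would not actually need (H5) for this step (your attribution of (H5) to the remainder is misplaced: the increment bound for $Z^n_s-Z^n_{\kappa_N(s)}$ only uses (H2)--(H4) and the Lipschitz bound on $B$, as in Lemma \ref{l6}); the paper's freezing, on the other hand, reduces everything to finitely many fixed operators, which is why it states (H5) as a hypothesis. One small point you gloss over: in the summand whose upper limit is $t_{k+1}\wedge t$, the convergence to $0$ is claimed ``uniformly in $t$'', but the partial last subinterval varies with $t$; this needs either a separate Cauchy--Schwarz bound of order $|\pi_N|^{1/2}$ uniform in $n$ (this is exactly the paper's term $\widetilde I_4$) or an equicontinuity-plus-pointwise-convergence argument. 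Both fixes are routine, so this is a presentational gap rather than a flaw in the approach.
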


Throughout the present paper, we denote by $C_{p_1,p_2,\cdots}$ some positive constant whose value may change from
line to line, and depends only on the variables $p_1,p_2,\cdots$.

\section{Proof of main result}
\setcounter{equation}{0}
 \setcounter{definition}{0}
We first show the existence and uniqueness of solutions with some priori estimates to the skeleton equation (\ref{e4}), and then we verify \textbf{Condition (A)} which gives the LDP for  $\{X^\varepsilon:\varepsilon>0\}$ on  $C([0,T]; H)$. In particular, in order to avoid the compactness assumption in the Gelfand triple, some time discretization techniques will also be employed.

\begin{lemma}\label{l4}
Suppose that $({\mathbf{H}}{\mathbf{1}})$-$({\mathbf{H}}{\mathbf{4}})$ hold. For every $x\in H$ and $\phi\in L^2([0,T];U)$, there exists a unique solution $\{\bar{X}^{\phi}_t\}_{t\in[0, T]}$ to Eq.~(\ref{e4}) fulfilling
\begin{equation}\label{a1}
\sup_{\phi\in S_M}\Big\{\sup_{t\in[0,T]}\|\bar{X}^{\phi}_t\|_{H}^2+\theta\int_0^T\|\bar{X}^{\phi}_t\|_{V}^{\alpha}dt\Big\}\leq C_{T,M}(1+\|x\|_{H}^2+\sup_{t\in[0,T]}\|X^0_t\|_H^2),
\end{equation}
where $C_{T,M}$ is a positive constant.
\end{lemma}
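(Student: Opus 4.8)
The plan is to treat the skeleton equation (\ref{e4}) as a deterministic (random-coefficient-free) evolution equation in the Gelfand triple $V\subset H\subset V^*$ with a fixed, known measure flow $t\mapsto\mathscr{L}_{X^0_t}=\delta_{X^0_t}$, and to apply the classical variational existence/uniqueness theory (as in \cite[Theorem 4.2.5]{LR1}) to the time-dependent monotone operator $\tilde A(t,u):=A(t,u,\mathscr{L}_{X^0_t})+B(t,u,\mathscr{L}_{X^0_t})\phi_t$. The first step is to verify that $\tilde A$ inherits hemicontinuity, coercivity, monotonicity and growth from $(\mathbf{H1})$–$(\mathbf{H4})$: the measure argument is frozen, so $(\mathbf{H1})$ gives hemicontinuity of $u\mapsto A(t,u,\delta_{X^0_t})$ directly, and the extra term $B(t,u,\delta_{X^0_t})\phi_t$ is, by the Lipschitz bound in $(\mathbf{H3})$, affine-Lipschitz in $u$ from $H$ (hence $V$) into $H\subset V^*$, so it does not spoil demicontinuity or monotonicity (one picks up only a $C\|\phi_t\|_U\|u-v\|_H^2$-type term, integrable in $t$ since $\phi\in L^2([0,T];U)$ and after a Gronwall argument this is harmless). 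For coercivity, pairing $\tilde A(t,u)$ with $u$ and using $(\mathbf{H2})$ together with Young's inequality on $\langle B(t,u,\delta_{X^0_t})\phi_t,u\rangle_H\le \tfrac12\|B(t,u,\delta_{X^0_t})\|_{L_2(U,H)}^2\|\phi_t\|_U^2\cdot(\text{const})$ — more precisely $\le \eta\|u\|_V^\alpha + C_\eta(1+\|u\|_H^2+\|X^0_t\|_H^2)\|\phi_t\|_U^2$ using $(\mathbf{H3})$ to bound $\|B\|_{L_2(U,H)}$ linearly in $\|u\|_H$ (with the base point $B(t,0,\delta_0)$ controlled by the standing $L^2$ integrability assumption) — absorbs the control term into the $-\theta\|u\|_V^\alpha$ dissipation at the cost of a time-dependent but $L^1([0,T])$ coefficient $1+C\|\phi_t\|_U^2$ multiplying $\|u\|_H^2$. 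This is exactly the generalized coercivity/boundedness with $L^1$-in-time coefficients under which the variational framework still applies, giving existence of a unique solution $\bar X^\phi\in L^\alpha([0,T];V)\cap C([0,T];H)$.

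For the a priori bound (\ref{a1}), I would apply the Itô/chain rule for the Gelfand triple (here purely the deterministic product rule, \cite[Theorem 4.2.5]{LR1}) to $t\mapsto\|\bar X^\phi_t\|_H^2$:
\begin{equation*}
\|\bar X^\phi_t\|_H^2=\|x\|_H^2+\int_0^t\Big(2{}_{V^*}\langle A(s,\bar X^\phi_s,\delta_{X^0_s}),\bar X^\phi_s\rangle_V+2\langle B(s,\bar X^\phi_s,\delta_{X^0_s})\phi_s,\bar X^\phi_s\rangle_H\Big)ds.
\end{equation*}
Using $(\mathbf{H2})$ on the first term (with $\mu=\delta_{X^0_s}$, so $\mu(\|\cdot\|_H^2)=\|X^0_s\|_H^2$) it is bounded by $C\|\bar X^\phi_s\|_H^2+C\|X^0_s\|_H^2-\theta\|\bar X^\phi_s\|_V^\alpha+C$; on the second term, $(\mathbf{H3})$ plus the finiteness of $\int_0^T\|B(s,0,\delta_0)\|_{L_2(U,H)}^2ds$ give $\|B(s,\bar X^\phi_s,\delta_{X^0_s})\|_{L_2(U,H)}\le C(1+\|\bar X^\phi_s\|_H+\|X^0_s\|_H)$, whence by Cauchy–Schwarz and Young
\begin{equation*}
2\langle B(s,\bar X^\phi_s,\delta_{X^0_s})\phi_s,\bar X^\phi_s\rangle_H\le C\big(1+\|\bar X^\phi_s\|_H^2+\|X^0_s\|_H^2\big)\big(1+\|\phi_s\|_U^2\big).
\end{equation*}
Collecting terms, for $\phi\in S_M$ (so $\int_0^T\|\phi_s\|_U^2ds\le M$),
\begin{equation*}
\|\bar X^\phi_t\|_H^2+\theta\int_0^t\|\bar X^\phi_s\|_V^\alpha ds\le \|x\|_H^2+C_T\big(1+\sup_{s\in[0,T]}\|X^0_s\|_H^2\big)+\int_0^t C\big(1+\|\phi_s\|_U^2\big)\|\bar X^\phi_s\|_H^2\,ds,
\end{equation*}
and Gronwall's inequality with the $L^1([0,T])$ weight $g(s):=C(1+\|\phi_s\|_U^2)$, whose integral is at most $C_T(1+M)$ uniformly over $\phi\in S_M$, yields
\begin{equation*}
\sup_{t\in[0,T]}\|\bar X^\phi_t\|_H^2\le e^{C_T(1+M)}\Big(\|x\|_H^2+C_T\big(1+\sup_{s\in[0,T]}\|X^0_s\|_H^2\big)\Big)=:C_{T,M}\big(1+\|x\|_H^2+\sup_{s\in[0,T]}\|X^0_s\|_H^2\big).
\end{equation*}
Feeding this back into the dissipation term bounds $\theta\int_0^T\|\bar X^\phi_t\|_V^\alpha dt$ by the same right-hand side, and the supremum over $\phi\in S_M$ is harmless since every constant produced is a function of $T,M$ only. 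Note $\sup_{t}\|X^0_t\|_H^2<\infty$ by Proposition \ref{p2}(i), so the bound is genuinely finite.

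The main obstacle, and the only place requiring real care rather than bookkeeping, is the treatment of the control term $B(t,\bar X^\phi_t,\delta_{X^0_t})\phi_t$ in the coercivity/boundedness verification: because $\phi$ is merely $L^2$ in time, the effective lower-order coefficients are only $L^1([0,T])$, not bounded, so one must use the version of the variational well-posedness theorem that permits time-dependent $L^1$ coefficients in $(\mathbf{H2})$-type estimates, and correspondingly the Gronwall step must be the integral ($L^1$-weight) version; the uniformity in $\phi\in S_M$ then comes for free from the uniform $L^2$-bound $M$. A secondary point worth a line is checking that $s\mapsto B(s,\bar X^\phi_s,\delta_{X^0_s})\phi_s$ is genuinely in $L^2([0,T];V^*)$ (indeed in $L^2([0,T];H)$, hence in $V^*$) so that the equation and the chain rule make sense — this follows from the linear-growth bound on $\|B\|_{L_2(U,H)}$ together with $\bar X^\phi\in L^\infty([0,T];H)$ and $\phi\in L^2([0,T];U)$.
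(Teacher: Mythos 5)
Your a priori estimate is essentially the paper's: apply the chain rule to $\|\bar X^\phi_t\|_H^2$, use $({\mathbf{H}}{\mathbf{2}})$ on the drift, absorb the control term, and run Gronwall with the $L^1$-weight $C(1+\|\phi_t\|_U^2)$, whose integral is bounded by $C(T+M)$ uniformly over $S_M$; this reproduces the paper's estimate (\ref{2}) and hence (\ref{a1}). (A small simplification the paper uses and you could adopt: bound $2\langle B(t,u,\mu^0_t)\phi_t,u\rangle_H\le\|B(t,u,\mu^0_t)\|_{L_2(U,H)}^2+\|\phi_t\|_U^2\|u\|_H^2$ and let the $\|B\|^2$ term be eaten by the one already present on the left of $({\mathbf{H}}{\mathbf{2}})$; then no linear-growth bound on $B$, and in particular no pointwise control of $\|B(t,0,\delta_0)\|_{L_2(U,H)}$, is needed at this stage — your displayed bound $\|B(s,\bar X^\phi_s,\delta_{X^0_s})\|_{L_2(U,H)}\le C(1+\|\bar X^\phi_s\|_H+\|X^0_s\|_H)$ silently assumes $\sup_t\|B(t,0,\delta_0)\|_{L_2(U,H)}<\infty$, which is not among the hypotheses of this lemma.)

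The genuine gap is in the existence step. You apply the variational well-posedness theorem directly to $\tilde A(t,u)=A(t,u,\mu^0_t)+B(t,u,\mu^0_t)\phi_t$ for $\phi\in L^2([0,T];U)$, but the perturbation only yields monotonicity and coercivity with time-dependent coefficients of the form $C\|\phi_t\|_U\|u-v\|_H^2$ and $C(1+\|\phi_t\|_U^2)\|u\|_H^2$, i.e. merely $L^1$ in time, whereas \cite[Theorem 4.2.5]{LR1} — the only well-posedness result invoked in this paper — requires genuine constants in front of $\|u-v\|_H^2$ and $\|u\|_H^2$ (only the additive term may be an $L^1$ function). Your sentence that ``the variational framework still applies'' with $L^1$-in-time coefficients is exactly the nontrivial claim that needs either a proof (redoing the Galerkin/monotonicity argument with the weighted Gronwall inequality) or a precise citation; as written it is an appeal to a theorem you have not established. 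The paper circumvents this: it first takes $\phi\in L^\infty([0,T];U)$, for which $\tilde A_\mu$ does satisfy the hypotheses of \cite[Theorem 4.2.5]{LR1} with constant coefficients, and then treats general $\phi\in L^2$ by approximating with $\phi^n\in L^\infty$, showing via $({\mathbf{H}}{\mathbf{3}})$, the uniform estimate (\ref{2})–(\ref{3}) and Gronwall that $\{\bar X^{\phi^n}\}$ is Cauchy in $C([0,T];H)$, and identifying the limit as a solution by the standard monotonicity argument (cf.\ \cite[Theorem 30.A]{Z}). Either you supply that approximation/limit argument (or an honest proof of the $L^1$-coefficient extension), or the existence part of your proposal is incomplete; the uniqueness and the bound (\ref{a1}) themselves are fine.
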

\begin{proof}
To verify the well-posedness result of Eq.~(\ref{e4}), we first consider $\phi\in L^{\infty}([0,T];U)$ and let
$$\widetilde{A}_{\mu}(t,u):= A(t,u,\mu)+B(t,u,\mu)\phi_t.$$
In terms of ({\textbf{H}}{\textbf{1}})-({\textbf{H}}{\textbf{4}}), it is enough to check that the conditions presented in \cite[Theorem 4.2.5]{LR1} hold for $\widetilde{A}_{\mu}$. Therefore Eq.~(\ref{e4}) has a unique strong solution $\bar{X}^{\phi}$ fulfilling
$$\sup_{t\in[0,T]}\|\bar{X}^{\phi}_t\|_{H}^2+\int_0^T\|\bar{X}^{\phi}_t\|_{V}^{\alpha}dt<\infty,~\phi\in L^{\infty}([0,T];U).$$
For any element $\phi\in L^2([0,T];U)$, it is easy to find a sequence
$\phi^n\in L^\infty([0,T];U)$ such that $\phi^n$ converges strongly to $\phi$ in $L^2([0,T];U)$ as $n\to\infty$. Let us denote by $\bar{X}^{\phi^n}$ the unique solution to Eq.~(\ref{e4}) with $\phi^n\in L^{\infty}([0,T];U)$. For simplicity of notations, we denote $\mu^0_t:=\mathscr{L}_{X^0_t}$. Using ({\textbf{H}}{\textbf{3}}),  for any $n,m\in\mathbb{N}$, there is a  constant $C>0$ such that
\begin{eqnarray}\label{1}
\!\!\!\!\!\!\!\!&&\frac{d}{dt}\|\bar{X}^{\phi^n}_t-\bar{X}^{\phi^m}_t\|_{H}^2
\nonumber\\
~=\!\!\!\!\!\!\!\!&&2{}_{V^*}\langle A(t,\bar{X}^{\phi^n}_t,\mu^0_t)-A(t,\bar{X}^{\phi^m}_t,\mu^0_t),\bar{X}^{\phi^n}_t-\bar{X}^{\phi^m}_t\rangle_{V}
\nonumber\\
\!\!\!\!\!\!\!\!&&+2\langle B(t,\bar{X}^{\phi^n}_t,\mu^0_t)\phi^n_t-B(t,\bar{X}^{\phi^m}_t,\mu^0_t)\phi^m_t,\bar{X}^{\phi^n}_t-\bar{X}^{\phi^m}_t\rangle_{H}
\nonumber\\
~\leq\!\!\!\!\!\!\!\!&&2{}_{V^*}\langle A(t,\bar{X}^{\phi^n}_t,\mu^0_t)-A(t,\bar{X}^{\phi^m}_t,\mu^0_t),\bar{X}^{\phi^n}_t-\bar{X}^{\phi^m}_t\rangle_{V}
\nonumber\\
\!\!\!\!\!\!\!\!&&
+\|B(t,\bar{X}^{\phi^n}_t,\mu^0_t)-B(t,\bar{X}^{\phi^m}_t,\mu^0_t)\|_{L_2(U,H)}^2+\|\phi^n_t\|_U^2\|\bar{X}^{\phi^n}_t-\bar{X}^{\phi^m}_t\|_{H}^2
\nonumber\\
\!\!\!\!\!\!\!\!&&+2\langle B(t,\bar{X}^{\phi^m}_t,\mu^0_t)(\phi^n_t-\phi^m_t),\bar{X}^{\phi^n}_t-\bar{X}^{\phi^m}_t\rangle_{H}
\nonumber\\
~\leq\!\!\!\!\!\!\!\!&&\|\phi^n_t-\phi^m_t\|_U^2+(C+\|\phi^n_t\|_U^2+\|B(t,\bar{X}^{\phi^m}_t,\mu^0_t)\|_{L_2(U,H)}^2)\|\bar{X}^{\phi^n}_t-\bar{X}^{\phi^m}_t\|_{H}^2.
\end{eqnarray}
 Applying Gronwall' inequality and ({\textbf{H}}{\textbf{3}}) gives that
\begin{eqnarray}\label{4}
\!\!\!\!\!\!\!\!&&\|\bar{X}^{\phi^n}_t-\bar{X}^{\phi^m}_t\|_{H}^2
\nonumber\\
~\leq\!\!\!\!\!\!\!\!&&\exp\Big\{\int_0^T\Big(C+\|\phi^n_t\|_U^2+\|B(t,\bar{X}^{\phi^m}_t,\mu^0_t)\|_{L_2(U,H)}^2\Big)dt\Big\}\int_0^T\|\phi^n_t-\phi^m_t\|_U^2dt
\nonumber\\
~\leq\!\!\!\!\!\!\!\!&&C\exp\Big\{\int_0^T\Big(1+\|\phi^n_t\|_U^2+\|\bar{X}^{\phi^m}_t\|_H^2+\mu^0_t(\|\cdot\|_H^2)\Big)dt\Big\}\int_0^T\|\phi^n_t-\phi^m_t\|_U^2dt.
\end{eqnarray}
Repeating the similar arguments as in (\ref{1}), one can get
\begin{eqnarray*}
\frac{d}{dt}\|\bar{X}^{\phi^m}_t\|_{H}^2=\!\!\!\!\!\!\!\!&&2{}_{V^*}\langle A(t,\bar{X}^{\phi^m}_t,\mu^0_t),\bar{X}^{\phi^m}_t\rangle_{V}+2\langle B(t,\bar{X}^{\phi^m}_t,\mu^0_t)\phi^m_t,\bar{X}^{\phi^m}_t\rangle_{H}
\nonumber\\
\leq\!\!\!\!\!\!\!\!&&-\theta\|\bar{X}^{\phi^m}_t\|_{V}^{\alpha}+C(1+\|\phi^m_t\|_U^2)\|\bar{X}^{\phi^m}_t\|_{H}^2+C(1+\mu^0_t(\|\cdot\|_H^2)).
\end{eqnarray*}
Note that $X^0\in C([0,T];H)$, and $\mu^0_t(\|\cdot\|_H^2)=\|X_t^0\|_H^2$. Then, for any $\phi^m\in S_M$, Gronwall' inequality yields that
\begin{eqnarray}\label{2}
\!\!\!\!\!\!\!\!&&\sup_{t\in[0,T]}\|\bar{X}^{\phi^m}_t\|_{H}^2+\theta\int_0^T\|\bar{X}^{\phi^m}_t\|_{V}^{\alpha}dt
\nonumber\\
\leq\!\!\!\!\!\!\!\!&&C_T\exp\Big\{\int_0^T\Big(1+\|\phi^m_t\|_U^2\Big)dt\Big\}(1+\|x\|_{H}^2+\sup_{t\in[0,T]}\|X^0_t\|_H^2)
\nonumber\\
\leq\!\!\!\!\!\!\!\!&&C_{T,M}(1+\|x\|_{H}^2+\sup_{t\in[0,T]}\|X^0_t\|_H^2),
\end{eqnarray}
where the constant $C_{T,M}$ only depends on $T,M$.

Combining ({\textbf{H}}{\textbf{3}}) with (\ref{2}) leads to
\begin{eqnarray}\label{3}
\int_0^T\|B(t,\bar{X}^{\phi^m}_t,\mu^0_t)\|_{L_2(U,H)}^2dt\leq\!\!\!\!\!\!\!\!&& C\int_0^T\big(1+\|\bar{X}^{\phi^m}_t\|_{H}^2+\mu^0_t(\|\cdot\|_H^2)\big)dt
\nonumber\\
\leq\!\!\!\!\!\!\!\!&&C_{T,M}(1+\|x\|_{H}^2+\sup_{t\in[0,T]}\|X^0_t\|_H^2).
\end{eqnarray}

Consequently, substituting (\ref{2}) and (\ref{3}) into (\ref{4}) and then letting $n,m\to\infty$, it is easy to see that $\{\bar{X}^{\phi^n}\}_{n\geq1}$ is a Cauchy sequence in $C([0,T];H)$ and we denote the limit by $\bar{X}^{\phi}$. Making use of the standard monotonicity argument (cf. e.g.~\cite[Theorem 30.A]{Z}), one can get that $\bar{X}^{\phi}$ is a solution to Eq.~(\ref{e4}) corresponding to $\phi$. The pathwise uniqueness follows from the condition ({\textbf{H}}{\textbf{3}}) and Gronwall's inequality. In particular, the energy estimate (\ref{a1}) is a direct consequence of (\ref{2}).

The proof of Lemma \ref{l4} is completed. \hspace{\fill}$\Box$
\end{proof}

In the following, we devote to presenting that $\{X^\varepsilon:\varepsilon>0\}$ satisfies the LDP on $C([0,T]; H)$ by proving \textbf{Condition (A)} (i) and (ii). The proof of \textbf{Condition (A)} (i) will be given in Theorem \ref{t2} and \textbf{Condition (A)} (ii) will be established in Theorem \ref{t3}.

For any $\phi^\varepsilon\in \mathcal{A}_M$, $\varepsilon>0$, we recall that
$$X^{\varepsilon,\phi^\varepsilon}:=\mathcal{G}^\varepsilon\Big(\sqrt{\varepsilon}W_{\cdot}+\int_0^{\cdot}\phi^\varepsilon_sds\Big)$$
solves the following stochastic control equation
\begin{equation}\label{e7}
\left\{ \begin{aligned}
X^{\varepsilon,\phi^\varepsilon}_t=&~A(t,X^{\varepsilon,\phi^\varepsilon}_t,\mathscr{L}_{X^{\varepsilon}_t})dt+ B(t,X^{\varepsilon,\phi^\varepsilon}_t,\mathscr{L}_{X^{\varepsilon}_t})\phi^\varepsilon_sdt,\\
&+\sqrt{\varepsilon}B(t,X^{\varepsilon,\phi^\varepsilon}_t,\mathscr{L}_{X^{\varepsilon}_t})dW_t,\\
X^{\varepsilon,\phi^\varepsilon}_0=&~x\in H,
\end{aligned} \right.
\end{equation}
where $X^{\varepsilon}$ is the unique solution defined in Eq.~(\ref{e2}).

Before we give the proof of \textbf{Condition (A)} (i), we first introduce the following lemma which characterizes the difference between $X^\varepsilon$ and $X^0$.
\begin{lemma}\label{l5}
There is a constant $C_T>0$ such that
$$\mathbb{E}\Big[\sup_{t\in[0,T]}\|X^{\varepsilon}_t-X^0_t\|_H^2\Big]\leq C_T\varepsilon\big(\sup_{t\in[0,T]}\|X^0_t\|_H^2\big).$$
\end{lemma}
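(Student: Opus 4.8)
The plan is to estimate the difference $Z_t^\varepsilon := X_t^\varepsilon - X_t^0$ directly via Itô's formula applied to $\|Z_t^\varepsilon\|_H^2$, using the monotonicity and growth hypotheses $(\mathbf{H2})$--$(\mathbf{H4})$, and then close the estimate with a Gronwall argument. Since $X^0$ solves the deterministic equation $(\ref{e6})$ with $\mathscr{L}_{X_s^0} = \delta_{X_s^0}$ while $X^\varepsilon$ solves $(\ref{e2})$ with marginal law $\mathscr{L}_{X_s^\varepsilon}$, the drift difference $A(s, X_s^\varepsilon, \mathscr{L}_{X_s^\varepsilon}) - A(s, X_s^0, \delta_{X_s^0})$ must be handled by $(\mathbf{H3})$, which produces the two terms $C\|Z_s^\varepsilon\|_H^2$ and $C\,\mathbb{W}_{2,H}(\mathscr{L}_{X_s^\varepsilon}, \delta_{X_s^0})^2$. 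The key point is that $\mathbb{W}_{2,H}(\mathscr{L}_{X_s^\varepsilon}, \delta_{X_s^0})^2 \le \mathbb{E}\|X_s^\varepsilon - X_s^0\|_H^2 = \mathbb{E}\|Z_s^\varepsilon\|_H^2$, because the Dirac mass forces the only coupling to be the product, so the Wasserstein term is itself controlled by the quantity we are estimating.

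The key steps, in order, are: (1) apply Itô's formula to $\|Z_t^\varepsilon\|_H^2$, obtaining a drift term $2\,{}_{V^*}\langle A(s,X_s^\varepsilon,\mathscr{L}_{X_s^\varepsilon}) - A(s,X_s^0,\delta_{X_s^0}), Z_s^\varepsilon\rangle_V$, a noise bracket term $\varepsilon\|B(s,X_s^\varepsilon,\mathscr{L}_{X_s^\varepsilon})\|_{L_2(U,H)}^2\,ds$, and a stochastic integral term $2\sqrt{\varepsilon}\langle Z_s^\varepsilon, B(s,X_s^\varepsilon,\mathscr{L}_{X_s^\varepsilon})\,dW_s\rangle_H$; (2) bound the drift term via $(\mathbf{H3})$ by $C(\|Z_s^\varepsilon\|_H^2 + \mathbb{W}_{2,H}(\mathscr{L}_{X_s^\varepsilon},\delta_{X_s^0})^2) \le C(\|Z_s^\varepsilon\|_H^2 + \mathbb{E}\|Z_s^\varepsilon\|_H^2)$; (3) bound the $\varepsilon$-bracket term using $(\mathbf{H3})$ again (writing $B(s,X_s^\varepsilon,\mathscr{L}_{X_s^\varepsilon}) = [B(s,X_s^\varepsilon,\mathscr{L}_{X_s^\varepsilon}) - B(s,X_s^0,\delta_{X_s^0})] + B(s,X_s^0,\delta_{X_s^0})$, so its square is $\le C\|Z_s^\varepsilon\|_H^2 + C\,\mathbb{E}\|Z_s^\varepsilon\|_H^2 + C\|B(s,X_s^0,\delta_{X_s^0})\|_{L_2(U,H)}^2$) together with the a priori bound from Proposition \ref{p1} on $\mathbb{E}[\sup_t \|X_t^\varepsilon\|_H^2]$ and the $C([0,T];H)$-bound on $X^0$ from Proposition \ref{p2}, so that $\int_0^T \|B(s,X_s^0,\delta_{X_s^0})\|_{L_2(U,H)}^2\,ds \le C_T(1 + \sup_t\|X_t^0\|_H^2)$; (4) take the supremum over $t$ and then expectation, using the Burkholder–Davis–Gundy inequality on the stochastic integral to absorb it into $\tfrac12\mathbb{E}[\sup_{s\le t}\|Z_s^\varepsilon\|_H^2]$ plus a term $C\varepsilon \mathbb{E}\int_0^t \|B(s,X_s^\varepsilon,\mathscr{L}_{X_s^\varepsilon})\|_{L_2(U,H)}^2\,ds$; (5) set $g(t) := \mathbb{E}[\sup_{s\le t}\|Z_s^\varepsilon\|_H^2]$, collect all terms into the inequality $g(t) \le C\varepsilon(1 + \sup_s\|X_s^0\|_H^2) + C\int_0^t g(s)\,ds$ (noting $\mathbb{E}\|Z_s^\varepsilon\|_H^2 \le g(s)$), and apply Gronwall to conclude $g(T) \le C_T\varepsilon(1 + \sup_s\|X_s^0\|_H^2)$, which after absorbing the constant $1$ into the bound (or keeping it, matching the stated form up to the precise constant) gives the claim.

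The main obstacle I expect is handling the noise-bracket term correctly: because of the $\sqrt{\varepsilon}$ scaling of the noise, the $\varepsilon\|B\|_{L_2}^2$ term is exactly what produces the $\varepsilon$ on the right-hand side, so one must be careful that every other contribution is already proportional to $g(s)$ (hence absorbed by Gronwall) rather than contributing an $O(1)$ term; in particular the decomposition of $B$ around $(X_s^0,\delta_{X_s^0})$ and the hypothesis $\int_0^T\|B(s,0,\delta_0)\|_{L_2(U,H)}^2\,ds < \infty$ (or, as written in the lemma's ambient assumptions, the finiteness of $\int_0^T\|B(s,X_s^0,\delta_{X_s^0})\|_{L_2(U,H)}^2\,ds$, which follows from $(\mathbf{H3})$ plus that integrability of $B(\cdot,0,\delta_0)$ and $X^0 \in C([0,T];H)$) is what keeps that term $O(\varepsilon)$. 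A secondary technical point is justifying the use of Itô's formula for $\|\cdot\|_H^2$ in the variational/Gelfand-triple setting, which is standard (Itô's formula in the triple $V \subset H \subset V^*$, cf.\ \cite[Theorem 4.2.5]{LR1}) once one knows both $X^\varepsilon$ and $X^0$ lie in $L^\alpha([0,T];V)\cap C([0,T];H)$ with the appropriate dual-space integrability of the drift, which is guaranteed by $(\mathbf{H4})$ and Propositions \ref{p1}--\ref{p2}.
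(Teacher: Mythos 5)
Your proposal is correct and follows essentially the same route as the paper: Itô's formula for $\|Z^\varepsilon_t\|_H^2$ in the Gelfand triple, the bound $\mathbb{W}_{2,H}(\mathscr{L}_{X^\varepsilon_s},\delta_{X^0_s})^2\le\mathbb{E}\|Z^\varepsilon_s\|_H^2$, the decomposition of $B$ around $(X^0_s,\delta_{X^0_s})$ for both the $\varepsilon$-bracket term and the BDG estimate of the stochastic integral, and a final Gronwall argument yielding $C_T\varepsilon(1+\sup_t\|X^0_t\|_H^2)$. The only cosmetic differences (invoking Proposition \ref{p1} for finiteness and noting the harmless ``$1+$'' in the final constant) do not change the argument.
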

\begin{proof}
For simplicity of notations, we denote $Z^\varepsilon_t:=X^{\varepsilon}_t-X^0_t$ which solves the following SPDE
\begin{equation*}\label{e8}
\left\{ \begin{aligned}
&Z^\varepsilon_t=\big[A(t,X^{\varepsilon}_t,\mu^\varepsilon_t)-A(t,X^0_t,\mu^0_t)]dt
+\sqrt{\varepsilon}B(t,X^{\varepsilon}_t,\mu^\varepsilon_t)dW_t,\\
&Z^\varepsilon_0=0,
\end{aligned} \right.
\end{equation*}
here we denote $\mu^\varepsilon_t:=\mathscr{L}_{X^{\varepsilon}_t}$ and $\mu^0_t:=\mathscr{L}_{X^0_t}$. Applying It\^{o}'s formula yields that
\begin{eqnarray}\label{12}
\|Z^\varepsilon_t\|_H^2=\!\!\!\!\!\!\!\!&&2\int_0^t{}_{V^*}\langle A(s,X^{\varepsilon}_s,\mu^\varepsilon_s)-A(s,X^0_s,\mu^0_s),Z^\varepsilon_s\rangle_Vds
\nonumber\\
\!\!\!\!\!\!\!\!&&+2\sqrt{\varepsilon}\int_0^t\langle B(s,X^{\varepsilon}_s,\mu^\varepsilon_s)dW_s,Z^\varepsilon_s\rangle_H+\varepsilon\int_0^t\|B(s,X^{\varepsilon}_s,\mu^\varepsilon_s)\|_{L_2(U,H)}^2ds
\nonumber\\
=:\!\!\!\!\!\!\!\!&&\sum_{i=1}^3 I_i(t).
\end{eqnarray}
In terms of the condition ({\textbf{H}}{\textbf{3}}), it follows that
\begin{eqnarray}\label{9}
I_1(t)\leq C\int_0^T\Big(\|Z^\varepsilon_t\|_H^2+\mathbb{W}_{2,H}(\mu^\varepsilon_t,\mu^0_t)^2\Big)dt.
\end{eqnarray}
The term $I_3(t)$ will be estimated as follows
\begin{eqnarray}\label{10}
I_3(t)\leq\!\!\!\!\!\!\!\!&&2\varepsilon\int_0^T\|B(t,X^{\varepsilon}_t,\mu^\varepsilon_t)-B(t,X^0_t,\mu^0_t)\|_{L_2(U,H)}^2dt
+2\varepsilon\int_0^T\|B(t,X^0_t,\mu^0_t)\|_{L_2(U,H)}^2dt
\nonumber\\
\leq\!\!\!\!\!\!\!\!&&C\varepsilon\int_0^T\Big(\|Z^\varepsilon_t\|_H^2+\mathbb{W}_{2,H}(\mu^\varepsilon_t,\mu^0_t)^2\Big)dt+C\varepsilon\int_0^T\Big(1+\|X^0_t\|_H^2+\mu^0_t(\|\cdot\|_H^2)\Big)dt
\nonumber\\
\leq\!\!\!\!\!\!\!\!&&C\varepsilon\int_0^T\Big(\|Z^\varepsilon_t\|_H^2+\mathbb{W}_{2,H}(\mu^\varepsilon_t,\mu^0_t)^2\Big)dt+C\varepsilon\Big(1+\sup_{t\in[0,T]}\|X^0_t\|_H^2\Big).
\end{eqnarray}
For the term $I_2(t)$, note that $\mathbb{W}_{2,H}(\mu^\varepsilon_t,\mu^0_t)^2\leq\mathbb{E}\|X^\varepsilon_t-X^0_t\|_H^2$, applying Burkholder-Davis-Gundy's inequality leads to
\begin{eqnarray}\label{11}
\!\!\!\!\!\!\!\!&&\mathbb{E}\Big[\sup_{t\in[0,T]}|I_2(t)|\Big]
\nonumber\\
\leq\!\!\!\!\!\!\!\!&&2\sqrt{\varepsilon}\mathbb{E}\Big[\sup_{t\in[0,T]}\Big|\int_0^t\langle \big(B(s,X^{\varepsilon}_s,\mu^\varepsilon_s)-B(s,X^0_s,\mu^0_s)\big)dW_s,Z^\varepsilon_s\rangle_H\Big|\Big]
\nonumber\\
\!\!\!\!\!\!\!\!&&+2\sqrt{\varepsilon}\mathbb{E}\Big[\sup_{t\in[0,T]}\Big|\int_0^t\langle B(s,X^0_s,\mu^0_s)dW_s,Z^\varepsilon_s\rangle_H\Big|\Big]
\nonumber\\
\leq\!\!\!\!\!\!\!\!&&C\sqrt{\varepsilon}\mathbb{E}\Big[\int_0^T\|B(t,X^{\varepsilon}_t,\mu^\varepsilon_t)-B(t,X^0_t,\mu^0_t)\|_{L_2(U,H)}^2\|Z^\varepsilon_t\|_H^2dt\Big]^{\frac{1}{2}}
\nonumber\\
\!\!\!\!\!\!\!\!&&+C\sqrt{\varepsilon}\mathbb{E}\Big[\int_0^T\|B(t,X^0_t,\mu^0_t)\|_{L_2(U,H)}^2\|Z^\varepsilon_t\|_H^2dt\Big]^{\frac{1}{2}}
\nonumber\\
\leq\!\!\!\!\!\!\!\!&&\frac{1}{2}\mathbb{E}\Big[\sup_{t\in[0,T]}\|Z^\varepsilon_t\|_H^2\Big]+C\varepsilon\int_0^T\mathbb{E}\|Z^\varepsilon_t\|_H^2dt+C\varepsilon\Big(1+\sup_{t\in[0,T]}\|X^0_t\|_H^2\Big).
\end{eqnarray}
Thus combining (\ref{9})-(\ref{11}) with (\ref{12}) and using Gronwall's inequality, we have
\begin{eqnarray*}
\mathbb{E}\Big[\sup_{t\in[0,T]}\|Z^\varepsilon_t\|_H^2\Big]\leq C_T\varepsilon\Big(1+\sup_{t\in[0,T]}\|X^0_t\|_H^2\Big),
\end{eqnarray*}
which completes the proof of Lemma \ref{l5}. \hspace{\fill}$\Box$
\end{proof}

We would like to recall the map $\mathcal{G}^0$ defined in (\ref{g1}) and get the following result.
\begin{theorem}\label{t2}
Suppose that $({\mathbf{H}}{\mathbf{1}})$-$({\mathbf{H}}{\mathbf{4}})$ and $\int_0^T\|B(s,0,\delta_0)\|_{L_2(U,H)}^2ds<\infty$ hold. Let $\{\phi^\varepsilon: \varepsilon>0\}\subset \mathcal{A}_M$ for
any $M<\infty$. Then for any $\delta>0$,
$$\lim_{\varepsilon\to 0}\mathbb{P}\Big(d\Big(X^{\varepsilon,\phi^\varepsilon}_\cdot,\mathcal{G}^0\big(\int_0^{\cdot}\phi^\varepsilon_sds\big)\Big)>\delta\Big)=0.$$
\end{theorem}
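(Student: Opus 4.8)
The plan is to compare the controlled process $X^{\varepsilon,\phi^\varepsilon}$ (solving \eqref{e7}) with the skeleton process $\bar{X}^{\phi^\varepsilon}:=\mathcal{G}^0\big(\int_0^\cdot\phi^\varepsilon_s\,ds\big)$ (solving \eqref{e4} with control $\phi^\varepsilon$, which is legitimate since $\phi^\varepsilon(\omega)\in S_M$ a.s.), and to show their difference converges to zero in $C([0,T];H)$ in probability. Write $N^\varepsilon_t:=X^{\varepsilon,\phi^\varepsilon}_t-\bar{X}^{\phi^\varepsilon}_t$, so that $N^\varepsilon_0=0$ and, by subtracting the two equations, $N^\varepsilon$ satisfies an (It\^o) equation whose drift is $A(t,X^{\varepsilon,\phi^\varepsilon}_t,\mathscr{L}_{X^\varepsilon_t})-A(t,\bar X^{\phi^\varepsilon}_t,\mathscr{L}_{X^0_t})$, plus the control term $\big(B(t,X^{\varepsilon,\phi^\varepsilon}_t,\mathscr{L}_{X^\varepsilon_t})-B(t,\bar X^{\phi^\varepsilon}_t,\mathscr{L}_{X^0_t})\big)\phi^\varepsilon_t$, plus the stochastic term $\sqrt{\varepsilon}B(t,X^{\varepsilon,\phi^\varepsilon}_t,\mathscr{L}_{X^\varepsilon_t})\,dW_t$. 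Apply It\^o's formula to $\|N^\varepsilon_t\|_H^2$. The key structural point is that the distribution arguments differ: one is $\mathscr{L}_{X^\varepsilon_t}$, the other $\mathscr{L}_{X^0_t}=\delta_{X^0_t}$, and $\mathbb{W}_{2,H}(\mathscr{L}_{X^\varepsilon_t},\delta_{X^0_t})^2\le \mathbb{E}\|X^\varepsilon_t-X^0_t\|_H^2$, which is $O(\varepsilon)$ uniformly in $t$ by Lemma~\ref{l5}. So the law-dependence contributes only an $O(\varepsilon)$ error, not a term in $\|N^\varepsilon\|_H$.

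The estimate then proceeds as follows. First split each coefficient difference using the triangle inequality to separate the genuinely monotone/Lipschitz part (same law on both sides, handled by inserting and subtracting $A(t,\bar X^{\phi^\varepsilon}_t,\mathscr{L}_{X^\varepsilon_t})$ and $B(t,\bar X^{\phi^\varepsilon}_t,\mathscr{L}_{X^\varepsilon_t})$) from the law-perturbation part. By $({\mathbf H}{\mathbf 3})$ the monotone part of the drift inner product is bounded by $C\|N^\varepsilon_t\|_H^2+C\,\mathbb{W}_{2,H}(\mathscr{L}_{X^\varepsilon_t},\delta_{X^0_t})^2$. For the control term, use Cauchy--Schwarz and $({\mathbf H}{\mathbf 3})$ to bound $2\langle(B(t,X^{\varepsilon,\phi^\varepsilon}_t,\cdot)-B(t,\bar X^{\phi^\varepsilon}_t,\cdot))\phi^\varepsilon_t,N^\varepsilon_t\rangle_H$ by $\|B(\cdots)-B(\cdots)\|_{L_2(U,H)}^2 + \|\phi^\varepsilon_t\|_U^2\|N^\varepsilon_t\|_H^2 \le C(1+\|\phi^\varepsilon_t\|_U^2)\|N^\varepsilon_t\|_H^2 + C\,\mathbb{W}_{2,H}(\mathscr{L}_{X^\varepsilon_t},\delta_{X^0_t})^2(1+\|\phi^\varepsilon_t\|_U^2)$; since $\int_0^T\|\phi^\varepsilon_t\|_U^2\,dt\le M$, the running prefactor $C(1+\|\phi^\varepsilon_t\|_U^2)$ is integrable in time with a bound depending only on $M$, which is exactly what a Gr\"onwall argument needs. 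The stochastic integral term is controlled in $\mathbb{E}\big[\sup_{t}|\cdot|\big]$ by Burkholder--Davis--Gundy plus the growth bound $({\mathbf H}{\mathbf 2})$/$({\mathbf H}{\mathbf 4})$ and the a priori bound $\mathbb{E}\sup_t\|X^{\varepsilon,\phi^\varepsilon}_t\|_H^2<\infty$ (which must first be established, uniformly in $\varepsilon\le 1$ and in $\phi^\varepsilon\in\mathcal{A}_M$, via an It\^o estimate of $\|X^{\varepsilon,\phi^\varepsilon}_t\|_H^2$ using $({\mathbf H}{\mathbf 2})$ to absorb $-\theta\|\cdot\|_V^\alpha$ and the control term; the law argument $\mathscr{L}_{X^\varepsilon_t}$ there is controlled by Proposition~\ref{p1}); this yields a factor $\sqrt{\varepsilon}$ times bounded quantities, absorbed after a standard $\tfrac12\mathbb{E}\sup\|N^\varepsilon\|_H^2$ splitting. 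Collecting terms and applying the stochastic Gr\"onwall lemma gives
$$\mathbb{E}\Big[\sup_{t\in[0,T]}\|N^\varepsilon_t\|_H^2\Big]\le C_{T,M}\,\varepsilon\Big(1+\sup_{t\in[0,T]}\|X^0_t\|_H^2\Big)\longrightarrow 0,$$
and Chebyshev's inequality finishes the proof of the claimed convergence in probability.

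The main obstacle is the absence of any compactness in the Gelfand triple: one cannot pass to the limit in the nonlinear term $A$ by a compactness-plus-monotonicity (Minty) argument on subsequences, which is the usual route for Condition~(A)(i) in the variational setting. This forces the direct, quantitative comparison above, and the delicate point within it is handling the $V^*$--$V$ duality of the drift difference together with the control term $B(t,\cdot)\phi^\varepsilon_t$ while only having an $L^2$-in-time (not $L^\infty$) bound on $\phi^\varepsilon$ — one must be careful that all applications of Young's inequality put the $\|\phi^\varepsilon_t\|_U^2$ on the $\|N^\varepsilon_t\|_H^2$ factor (never on a term that is not yet known to be small) so that Gr\"onwall's exponential constant depends only on $M$ and $T$. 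A secondary technical wrinkle, flagged in the introduction, is that the authors invoke time-discretization techniques (inspired by \cite{CM,K2}): if the straightforward It\^o/Gr\"onwall argument sketched here does not close directly — for instance because the coercivity is needed to control $\int_0^T\|X^{\varepsilon,\phi^\varepsilon}_t-\bar X^{\phi^\varepsilon}_t\|_V^\alpha\,dt$ rather than just the $H$-norm — one discretizes the control, replaces $\phi^\varepsilon$ by its piecewise-averaged version on a mesh, controls the discretization error uniformly, and runs the estimate on each subinterval; the hypothesis $({\mathbf H}{\mathbf 5})$ on the time-regularity of $B$ and the integrability $\int_0^T\|B(s,0,\delta_0)\|_{L_2(U,H)}^2\,ds<\infty$ are there precisely to make that discretization error vanish.
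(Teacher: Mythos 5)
Your proposal is correct and follows essentially the paper's own route: set $\widetilde{Z}^{\varepsilon}_t:=X^{\varepsilon,\phi^\varepsilon}_t-\bar{X}^{\phi^\varepsilon}_t$, apply It\^o's formula to $\|\widetilde{Z}^{\varepsilon}_t\|_H^2$, absorb the drift and control terms via $({\mathbf{H}}{\mathbf{3}})$ with the factor $\|\phi^\varepsilon_t\|_U^2$ always attached to $\|\widetilde{Z}^{\varepsilon}_t\|_H^2$ so that Gronwall's constant depends only on $T,M$, control the law-dependence through $\mathbb{W}_{2,H}(\mu^\varepsilon_t,\mu^0_t)^2\leq\mathbb{E}\|X^{\varepsilon}_t-X^0_t\|_H^2=O(\varepsilon)$ by Lemma \ref{l5}, treat the martingale term by Burkholder--Davis--Gundy, and conclude $\mathbb{E}\sup_{t}\|\widetilde{Z}^{\varepsilon}_t\|_H^2=O(\varepsilon)$. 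The one place where your route genuinely differs is the treatment of the $\varepsilon$-quadratic-variation and stochastic-integral terms: you propose to first establish a uniform-in-$\varepsilon$ bound $\mathbb{E}\sup_t\|X^{\varepsilon,\phi^\varepsilon}_t\|_H^2<\infty$, whereas the paper avoids any moment estimate on the controlled process by splitting $B(s,X^{\varepsilon,\phi^\varepsilon}_s,\mu^\varepsilon_s)$ as $\big(B(s,X^{\varepsilon,\phi^\varepsilon}_s,\mu^\varepsilon_s)-B(s,\bar{X}^{\phi^\varepsilon}_s,\mu^0_s)\big)+\big(B(s,\bar{X}^{\phi^\varepsilon}_s,\mu^0_s)-B(s,0,\delta_0)\big)+B(s,0,\delta_0)$, using only $({\mathbf{H}}{\mathbf{3}})$, the deterministic skeleton bound (\ref{a1}) of Lemma \ref{l4}, and $\int_0^T\|B(s,0,\delta_0)\|_{L_2(U,H)}^2ds<\infty$; this is exactly where that integrability hypothesis enters, not (as your closing paragraph suggests) in a discretization error. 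Indeed, no time discretization is needed for Condition (A)(i): the discretization (Lemma \ref{l6}) and the H\"older condition $({\mathbf{H}}{\mathbf{5}})$ are used only for Condition (A)(ii) in Theorem \ref{t3}. Your alternative, proving the uniform a priori bound for $X^{\varepsilon,\phi^\varepsilon}$ first (which is feasible since the controlled equation carries the frozen law $\mu^\varepsilon$, whose second moments are uniformly bounded by Lemma \ref{l5}), also closes the argument at the cost of that extra estimate, while the paper's splitting buys a shorter proof that leans only on the skeleton estimates.
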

\begin{proof}
Let us denote $\widetilde{Z}^{\varepsilon}_t:=X^{\varepsilon,\phi^\varepsilon}_t-\bar{X}^{\phi^\varepsilon}_t$, which satisfies the following SPDE
\begin{eqnarray*}
\left\{ \begin{aligned}
d\widetilde{Z}^{\varepsilon}_t=&\big[A(t,X^{\varepsilon,\phi^\varepsilon}_t,\mu^\varepsilon_t)-A(t,\bar{X}^{\phi^\varepsilon}_t,\mu^0_t)\big]dt
+\sqrt{\varepsilon}B(t,X^{\varepsilon,\phi^\varepsilon}_t,\mu^\varepsilon_t)dW_t\\
&+\big[B(t,X^{\varepsilon,\phi^\varepsilon}_t,\mu^\varepsilon_t)\phi^\varepsilon_t-B(t,\bar{X}^{\phi^\varepsilon}_t,\mu^0_t)\phi^\varepsilon_t\big]dt,\\
\widetilde{Z}^{\varepsilon}_0=&0.
\end{aligned}\right.
\end{eqnarray*}
Applying It\^{o}'s formula to $\|\widetilde{Z}^{\varepsilon}_t\|_{H}^2$ gives
\begin{eqnarray}\label{5}
\|\widetilde{Z}^{\varepsilon}_t\|_H^2=\!\!\!\!\!\!\!\!&&2\int_0^t{}_{V^*}\langle A(s,X^{\varepsilon,\phi^\varepsilon}_s,\mu^\varepsilon_s)-A(s,\bar{X}^{\phi^\varepsilon}_s,\mu^0_s),\widetilde{Z}^{\varepsilon}_s\rangle_{V}ds
\nonumber\\
\!\!\!\!\!\!\!\!&&+2\int_0^t\langle\big[B(s,X^{\varepsilon,\phi^\varepsilon}_s,\mu^\varepsilon_s)-B(s,\bar{X}^{\phi^\varepsilon}_s,\mu^0_s)\big]\phi^\varepsilon_s,\widetilde{Z}^{\varepsilon}_s\rangle_{H}ds
\nonumber\\
\!\!\!\!\!\!\!\!&&+\varepsilon\int_0^t
\|B(s,X^{\varepsilon,\phi^\varepsilon}_s,\mu^\varepsilon_s)\|_{L_2(U,H)}^2ds+2\sqrt{\varepsilon}\int_0^t\langle B(s,X^{\varepsilon,\phi^\varepsilon}_s,\mu^\varepsilon_s)dW_s,\widetilde{Z}^{\varepsilon}_s\rangle_H
\nonumber\\
=:\!\!\!\!\!\!\!\!&&\sum_{i=1}^{4}\overline{I}_i(t).
\end{eqnarray}
Below we aim to estimate the terms $\overline{I}_i(t)$, $i=1,2,3,4$, respectively.
\begin{eqnarray}\label{6}
\overline{I}_1(t)+\overline{I}_2(t)\leq\!\!\!\!\!\!\!\!&&2\int_0^t{}_{V^*}\langle A(s,X^{\varepsilon,\phi^\varepsilon}_s,\mu^\varepsilon_s)-A(s,\bar{X}^{\phi^\varepsilon}_s,\mu^0_s),\widetilde{Z}^{\varepsilon}_s\rangle_{V}ds
\nonumber\\
\!\!\!\!\!\!\!\!&&
+\int_0^t\|B(s,X^{\varepsilon,\phi^\varepsilon}_s,\mu^\varepsilon_s)-B(s,\bar{X}^{\phi^\varepsilon}_s,\mu^0_s)\|_{L_2(U,H)}^2ds+\int_0^t\|\phi^\varepsilon_s\|_U^2\|\widetilde{Z}^{\varepsilon}_s\|_H^2ds
\nonumber\\
\leq\!\!\!\!\!\!\!\!&&C\int_0^t\Big(\|\widetilde{Z}^{\varepsilon}_s\|_H^2+\mathbb{W}_{2,H}(\mu^\varepsilon_s,\mu^0_s)^2\Big)ds+\int_0^t\|\phi^\varepsilon_s\|_U^2\|\widetilde{Z}^{\varepsilon}_s\|_H^2ds.
\end{eqnarray}
Combining (\ref{6}) with (\ref{5}), one can get that
\begin{eqnarray*}
\|\widetilde{Z}^{\varepsilon}_t\|_H^2\leq\!\!\!\!\!\!\!\!&&C\int_0^t(1+\|\phi^\varepsilon_s\|_U^2)\|\widetilde{Z}^{\varepsilon}_s\|_H^2ds+C\int_0^t\mathbb{W}_{2,H}(\mu^\varepsilon_s,\mu^0_s)^2ds+\overline{I}_3(t)+\overline{I}_4(t).
\end{eqnarray*}
By Gronwall's inequality,
\begin{eqnarray}\label{13}
\|\widetilde{Z}^{\varepsilon}_t\|_H^2\leq\!\!\!\!\!\!\!\!&&\Big[C\int_0^t\mathbb{W}_{2,H}(\mu^\varepsilon_s,\mu^0_s)^2ds+
\overline{I}_3(t)+\overline{I}_4(t)\Big]
\times \exp\Big\{\int_0^t(1+\|\phi^\varepsilon_s\|_U^2)ds\Big\}.
\end{eqnarray}
The term $I_3(t)$ can be controlled as follows
\begin{eqnarray}\label{7}
I_3(t)\leq\!\!\!\!\!\!\!\!&&C\varepsilon\int_0^t
\|B(s,X^{\varepsilon,\phi^\varepsilon}_s,\mu^\varepsilon_s)-B(s,\bar{X}^{\phi^\varepsilon}_s,\mu^0_s)\|_{L_2(U,H)}^2ds
\nonumber\\
\!\!\!\!\!\!\!\!&&+C\varepsilon\int_0^t
\|B(s,\bar{X}^{\phi^\varepsilon}_s,\mu^0_s)-B(s,0,\delta_0)\|_{L_2(U,H)}^2ds+C\varepsilon\int_0^t\|B(s,0,\delta_0)\|_{L_2(U,H)}^2ds
\nonumber\\
\leq\!\!\!\!\!\!\!\!&&C\varepsilon\int_0^t\Big(\|\widetilde{Z}^{\varepsilon}_s\|_H^2+\mathbb{W}_{2,H}(\mu^\varepsilon_s,\mu^0_s)^2\Big)ds
+C\varepsilon\int_0^t
\Big(\|\bar{X}^{\phi^\varepsilon}_s\|_H^2+\mu^0_s(\|\cdot\|_H^2)\Big)ds+C_T\varepsilon
\nonumber\\
\leq\!\!\!\!\!\!\!\!&&C\varepsilon\int_0^t\Big(\|\widetilde{Z}^{\varepsilon}_s\|_H^2+\mathbb{W}_{2,H}(\mu^\varepsilon_s,\mu^0_s)^2\Big)ds
\nonumber\\
\!\!\!\!\!\!\!\!&&
+C_T\varepsilon\Big(1+\sup_{\phi\in S_M}\{\sup_{t\in[0,T]}\|\bar{X}^{\phi}_t\|_H^2\}+\sup_{t\in[0,T]}\|X^0_t\|_H^2\Big).
\end{eqnarray}
Note that $\mathbb{W}_{2,H}(\mu^\varepsilon_t,\mu^0_t)^2\leq\mathbb{E}\|X^\varepsilon_t-X^0_t\|_H^2$, making use of Burkholder-Davis-Gundy's inequality implies that
\begin{eqnarray}\label{8}
\!\!\!\!\!\!\!\!&&\mathbb{E}\big[\sup_{t\in[0,T]}|\overline{I}_4(t)|\big]
\nonumber\\
\leq\!\!\!\!\!\!\!\!&&C\sqrt{\varepsilon}\mathbb{E}\Big[\sup_{t\in[0,T]}\Big|\int_0^t\langle\big(B(s,X^{\varepsilon,\phi^\varepsilon}_s,\mu^\varepsilon_s)
-B(s,\bar{X}^{\phi^\varepsilon}_s,\mu^0_s)\big)dW_s,\widetilde{Z}^{\varepsilon}_s\rangle_H\Big|\Big]
\nonumber\\
\!\!\!\!\!\!\!\!&&+C\sqrt{\varepsilon}\mathbb{E}\Big[\sup_{t\in[0,T]}\Big|\int_0^t\langle \big(B(s,\bar{X}^{\phi^\varepsilon}_s,\mu^0_s)-B(s,0,\delta_0)\big)dW_s,\widetilde{Z}^{\varepsilon}_s\rangle_H\Big|\Big]
\nonumber\\
\!\!\!\!\!\!\!\!&&+C\sqrt{\varepsilon}\mathbb{E}\Big[\sup_{t\in[0,T]}\Big|\int_0^t\langle B(s,0,\delta_0)dW_s,\widetilde{Z}^{\varepsilon}_s\rangle_H\Big|\Big]
\nonumber\\
\leq\!\!\!\!\!\!\!\!&&C\sqrt{\varepsilon}\mathbb{E}\Big[\int_0^T\|B(s,X^{\varepsilon,\phi^\varepsilon}_s,\mu^\varepsilon_s)
-B(s,\bar{X}^{\phi^\varepsilon}_s,\mu^0_s)\|_{L_2(U,H)}^2\|\widetilde{Z}^{\varepsilon}_s\|_H^2ds\Big]^{\frac{1}{2}}
\nonumber\\
\!\!\!\!\!\!\!\!&&+C\sqrt{\varepsilon}\mathbb{E}\Big[\int_0^T\|B(s,\bar{X}^{\phi^\varepsilon}_s,\mu^0_s)-B(s,0,\delta_0)\|_{L_2(U,H)}^2\|\widetilde{Z}^{\varepsilon}_s\|_H^2ds\Big]^{\frac{1}{2}}
\nonumber\\
\!\!\!\!\!\!\!\!&&+C\sqrt{\varepsilon}\mathbb{E}\Big[\int_0^T\|B(s,0,\delta_0)\|_{L_2(U,H)}^2\|\widetilde{Z}^{\varepsilon}_s\|_H^2ds\Big]^{\frac{1}{2}}
\nonumber\\
\leq\!\!\!\!\!\!\!\!&&\varepsilon_0\mathbb{E}\big[\sup_{t\in[0,T]}\|\widetilde{Z}^{\varepsilon}_t\|_H^2\big]+C\varepsilon\mathbb{E}\int_0^T\Big(\|\widetilde{Z}^{\varepsilon}_t\|_H^2+\mathbb{W}_{2,H}(\mu^\varepsilon_t,\mu^0_t)^2\Big)dt
\nonumber\\
\!\!\!\!\!\!\!\!&&+C\varepsilon\mathbb{E}\int_0^T\Big(1+\|\bar{X}^{\phi^\varepsilon}_t\|_H^2+\mu^0_t(\|\cdot\|_H^2)\Big)dt
\nonumber\\
\leq\!\!\!\!\!\!\!\!&&\varepsilon_0\mathbb{E}\big[\sup_{t\in[0,T]}\|\widetilde{Z}^{\varepsilon}_t\|_H^2\big]+C\mathbb{E}\int_0^T\|\widetilde{Z}^{\varepsilon}_t\|_H^2dt
+C\varepsilon\int_0^T\mathbb{E}\|X^\varepsilon_t-X^0_t\|_H^2dt
\nonumber\\
\!\!\!\!\!\!\!\!&&
+C_T\varepsilon\Big(1+\sup_{\phi\in S_M}\{\sup_{t\in[0,T]}\|\bar{X}^{\phi}_t\|_H^2\}+\sup_{t\in[0,T]}\|X^0_t\|_H^2\Big),
\end{eqnarray}
where $\varepsilon_0>0$ is a small enough constant which will be chosen later.

Then substituting (\ref{7})-(\ref{8}) into (\ref{13}) and using $\phi^\varepsilon\in\mathcal{A}_M$ that
\begin{eqnarray*}
\!\!\!\!\!\!\!\!&&\mathbb{E}\Big[\sup_{t\in[0,T]}\|\widetilde{Z}^{\varepsilon}_t\|_H^2\Big]
\nonumber\\
\leq\!\!\!\!\!\!\!\!&&C_{T,M}\mathbb{E}\Big[\int_0^T\mathbb{W}_{2,H}(\mu^\varepsilon_s,\mu^0_s)^2ds+
\sup_{t\in[0,T]}|\overline{I}_3(t)|+\sup_{t\in[0,T]}|\overline{I}_4(t)|\Big]
\nonumber\\
\leq\!\!\!\!\!\!\!\!&&\frac{1}{2}\mathbb{E}\big[\sup_{t\in[0,T]}\|\widetilde{Z}^{\varepsilon}_t\|_H^2\big]+C_{T,M}\mathbb{E}\int_0^T\|\widetilde{Z}^{\varepsilon}_t\|_H^2dt+C_{T,M}\mathbb{E}\Big[\sup_{t\in[0,T]}\|X^\varepsilon_t-X^0_t\|_H^2\Big]
\nonumber\\
\!\!\!\!\!\!\!\!&&
+C_T\varepsilon\Big(1+\sup_{\phi\in S_M}\{\sup_{t\in[0,T]}\|\bar{X}^{\phi}_t\|_H^2\}+\sup_{t\in[0,T]}\|X^0_t\|_H^2\Big),
\end{eqnarray*}
where in the last step we take $\varepsilon_0=\frac{1}{2C_{T,M}}$.

From Gronwall's inequality and Lemma \ref{l5}, we obtain
\begin{eqnarray*}
\!\!\!\!\!\!\!\!&&\mathbb{E}\Big[\sup_{t\in[0,T]}\|\widetilde{Z}^{\varepsilon}_t\|_H^2\Big]
\nonumber\\
\leq\!\!\!\!\!\!\!\!&&C_{T,M}\mathbb{E}\Big[\sup_{t\in[0,T]}\|X^\varepsilon_t-X^0_t\|_H^2\Big]
+C_{T,M}\varepsilon\Big(1+\sup_{\phi\in S_M}\{\sup_{t\in[0,T]}\|\bar{X}^{\phi}_t\|_H^2\}+\sup_{t\in[0,T]}\|X^0_t\|_H^2\Big)
\nonumber\\
\leq\!\!\!\!\!\!\!\!&&C_{T,M}\varepsilon\Big(1+\|x\|_H^2+\sup_{t\in[0,T]}\|X^0_t\|_H^2\Big),
\end{eqnarray*}
where we used the priori estimate (\ref{a1}) in the last step.

Consequently, it is easy to see that
$$\lim_{\varepsilon\to 0}\mathbb{E}\Big[\sup_{t\in[0,T]}\Big\|X^{\varepsilon,\phi^\varepsilon}_t-\mathcal{G}^0\Big(\int_0^{t}\phi^\varepsilon_sds\Big)\Big\|_H^2\Big]=0,$$
which implies the assertion.  \hspace{\fill}$\Box$
\end{proof}

In order to prove \textbf{Condition (A)} (ii), we need the following crucial lemma that concerns the time
increments of solution to the deterministic skeleton equation (\ref{e4}).
\begin{lemma}\label{l6}
For any $x\in H$ and $\phi\in S_M$, there exists a constant $C>0$ such that
$$\int_0^{T}\|\bar{X}^{\phi}_t-\bar{X}^{\phi}_{t(\delta)}\|_{H}^2dt\leq C_{T,M}\delta(1+\|x\|_{H}^2+\sup_{t\in[0,T]}\|X^0_t\|_H^2),$$
here $\delta>0$ is a small enough constant, $t(\delta):=[\frac{t}{\delta}]\delta$ and $[s]$ is the largest integer smaller than $s$.
\end{lemma}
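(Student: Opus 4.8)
The plan is to control the time increment $\bar X^{\phi}_t-\bar X^{\phi}_{t(\delta)}$ by splitting the difference according to the three terms in the integral form of the skeleton equation~(\ref{e4}). Since $\bar X^{\phi}$ is differentiable in $t$ with values in $V^*$, we have
\[
\bar X^{\phi}_t-\bar X^{\phi}_{t(\delta)}=\int_{t(\delta)}^{t}A(s,\bar X^{\phi}_s,\mu^0_s)\,ds+\int_{t(\delta)}^{t}B(s,\bar X^{\phi}_s,\mu^0_s)\phi_s\,ds
\]
in $V^*$. The natural approach is \emph{not} to estimate $\|\bar X^{\phi}_t-\bar X^{\phi}_{t(\delta)}\|_H$ pointwise (we lack compactness/regularity for that), but to pair the difference with itself using the Gelfand-triple duality: write
\[
\|\bar X^{\phi}_t-\bar X^{\phi}_{t(\delta)}\|_H^2={}_{V^*}\!\big\langle \bar X^{\phi}_t-\bar X^{\phi}_{t(\delta)},\,\bar X^{\phi}_t-\bar X^{\phi}_{t(\delta)}\big\rangle_V
\]
and substitute the above representation, so that the $A$-term becomes $\int_{t(\delta)}^t {}_{V^*}\langle A(s,\bar X^{\phi}_s,\mu^0_s),\bar X^{\phi}_t-\bar X^{\phi}_{t(\delta)}\rangle_V\,ds$, to which the growth bound~(\textbf{H4}) applies after Young's inequality, and the $B$-term is handled by Cauchy--Schwarz in $H$ together with the Lipschitz/growth bound from~(\textbf{H3}).

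The key steps, in order, would be: (1) Fix $t$ and use the representation above; bound the $A$-contribution by $\int_{t(\delta)}^t\|A(s,\bar X^{\phi}_s,\mu^0_s)\|_{V^*}\|\bar X^{\phi}_t-\bar X^{\phi}_{t(\delta)}\|_V\,ds$, then by Young, $\le C\int_{t(\delta)}^t\|A(s,\bar X^{\phi}_s,\mu^0_s)\|_{V^*}^{\alpha/(\alpha-1)}\,ds+\tfrac{\theta}{?}\int_{t(\delta)}^t\|\bar X^{\phi}_s\|_V^{\alpha}\,ds$ after also using $\|\bar X^{\phi}_t-\bar X^{\phi}_{t(\delta)}\|_V^{\alpha}\le 2^{\alpha-1}(\|\bar X^{\phi}_t\|_V^\alpha+\|\bar X^{\phi}_{t(\delta)}\|_V^\alpha)$ — wait, here one must be slightly careful since $\|\bar X^{\phi}_t-\bar X^{\phi}_{t(\delta)}\|_V$ is the ``large'' quantity, so it is cleaner to keep one factor and integrate in $t$ first. (2) For the $B$-contribution, use $\big\|\int_{t(\delta)}^tB(s,\bar X^{\phi}_s,\mu^0_s)\phi_s\,ds\big\|_H\le\big(\int_{t(\delta)}^t\|B(s,\bar X^{\phi}_s,\mu^0_s)\|_{L_2(U,H)}^2\,ds\big)^{1/2}\big(\int_{t(\delta)}^t\|\phi_s\|_U^2\,ds\big)^{1/2}$, giving a factor that is small because $\phi\in S_M$ makes $s\mapsto\int_0^s\|\phi_r\|_U^2dr$ absolutely continuous. (3) Integrate over $t\in[0,T]$, use $\int_0^T(\cdot)_{t(\delta)}\,dt$-type estimates: the point is that $\int_0^T\big(\int_{t(\delta)}^t g(s)\,ds\big)dt\le\delta\int_0^T g(s)\,ds$ for nonnegative $g$, since each $s$ lies in at most an interval of $t$'s of length $\le\delta$. (4) Finally invoke the a priori bound~(\ref{a1}) from Lemma~\ref{l4}, which controls $\sup_t\|\bar X^{\phi}_t\|_H^2$, $\int_0^T\|\bar X^{\phi}_t\|_V^\alpha\,dt$, and hence (via~(\textbf{H4}) or~(\textbf{H3})) $\int_0^T\|A(s,\bar X^{\phi}_s,\mu^0_s)\|_{V^*}^{\alpha/(\alpha-1)}\,ds$ and $\int_0^T\|B(s,\bar X^{\phi}_s,\mu^0_s)\|_{L_2(U,H)}^2\,ds$, all uniformly in $\phi\in S_M$ by the stated $C_{T,M}(1+\|x\|_H^2+\sup_t\|X^0_t\|_H^2)$; note $\mu^0_s(\|\cdot\|_H^2)=\|X^0_s\|_H^2\le\sup_t\|X^0_t\|_H^2$ since $X^0$ is deterministic.

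The main obstacle I anticipate is handling the $A$-term cleanly, because $\|\bar X^{\phi}_t-\bar X^{\phi}_{t(\delta)}\|_V$ appears paired against $\|A\|_{V^*}$ and is not a priori small in $V$-norm — only the $H$-norm of the increment is expected to be $O(\sqrt\delta)$. The resolution is that after Young's inequality one distributes $\|\bar X^{\phi}_t-\bar X^{\phi}_{t(\delta)}\|_V^{\alpha}\le C(\|\bar X^{\phi}_t\|_V^{\alpha}+\|\bar X^{\phi}_{t(\delta)}\|_V^{\alpha})$ and integrates in $t$; the double integral $\int_0^T\int_{t(\delta)}^t\|A(s,\cdot)\|_{V^*}^{\alpha/(\alpha-1)}\,ds\,dt\le\delta\int_0^T\|A(s,\cdot)\|_{V^*}^{\alpha/(\alpha-1)}\,ds$ produces the factor $\delta$, while the leftover $\int_0^T(\|\bar X^{\phi}_t\|_V^\alpha+\|\bar X^{\phi}_{t(\delta)}\|_V^\alpha)\,dt$ is $O(1)$ uniformly in $\phi$ by~(\ref{a1}); choosing the Young exponents so the $V$-norm powers are absorbed into that already-bounded integral (rather than needing to be small) is the delicate bookkeeping point. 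Once that is arranged, collecting all contributions yields the claimed bound $\int_0^T\|\bar X^{\phi}_t-\bar X^{\phi}_{t(\delta)}\|_H^2\,dt\le C_{T,M}\delta(1+\|x\|_H^2+\sup_t\|X^0_t\|_H^2)$, uniformly over $\phi\in S_M$.
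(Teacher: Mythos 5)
Your overall strategy (represent the increment through the equation, extract the factor $\delta$ from the double integral via $\int_0^T\int_{t(\delta)}^t g(s)\,ds\,dt\le\delta\int_0^Tg(s)\,ds$, and close with the a priori bound (\ref{a1})) is the right spirit, and your treatment of the $B$-term is fine. However, your handling of the $A$-term has a genuine gap. After Young's inequality you are left with a term of the form $\eta\,\delta\int_0^T\|\bar X^{\phi}_t-\bar X^{\phi}_{t(\delta)}\|_V^{\alpha}\,dt$, which you propose to bound by $C\int_0^T\big(\|\bar X^{\phi}_t\|_V^{\alpha}+\|\bar X^{\phi}_{t(\delta)}\|_V^{\alpha}\big)\,dt$ and then declare $O(1)$ by (\ref{a1}). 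The second summand, $\int_0^T\|\bar X^{\phi}_{t(\delta)}\|_V^{\alpha}\,dt=\delta\sum_k\|\bar X^{\phi}_{k\delta}\|_V^{\alpha}+\cdots$, is a Riemann sum of $V$-norms at the fixed grid points $k\delta$, and (\ref{a1}) gives no control over it: the solution is only $dt$-a.e.\ $V$-valued ($\bar X^{\phi}\in L^{\alpha}([0,T];V)$ with a continuous $H$-valued version), so $\bar X^{\phi}_{k\delta}$ need not even belong to $V$, and even when it does the Riemann sum need not be comparable to $\int_0^T\|\bar X^{\phi}_s\|_V^{\alpha}\,ds$ (no bound on $\sup_t\|\bar X^{\phi}_t\|_V$ is available). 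The same difficulty already infects your very first identity $\|\bar X^{\phi}_t-\bar X^{\phi}_{t(\delta)}\|_H^2={}_{V^*}\langle \bar X^{\phi}_t-\bar X^{\phi}_{t(\delta)},\,\bar X^{\phi}_t-\bar X^{\phi}_{t(\delta)}\rangle_V$, whose right-hand slot must lie in $V$. So the ``delicate bookkeeping point'' you flagged is exactly where the argument breaks.

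The paper avoids any pointwise $V$-evaluation at grid times. It first bounds $\int_0^{T}\|\bar X^{\phi}_t-\bar X^{\phi}_{t(\delta)}\|_H^2\,dt$ by an $O(\delta)$ term plus $2\int_\delta^T\|\bar X^{\phi}_t-\bar X^{\phi}_{t-\delta}\|_H^2\,dt+2\int_\delta^T\|\bar X^{\phi}_{t(\delta)}-\bar X^{\phi}_{t-\delta}\|_H^2\,dt$, and then applies the chain rule (energy identity) on $[t-\delta,t]$, so that $A(s,\bar X^{\phi}_s,\mu^0_s)$ is paired with $\bar X^{\phi}_s-\bar X^{\phi}_{t-\delta}$, where both time indices range over a continuum. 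After H\"older in the double integral and Fubini, only quantities of the form $\delta\int_0^T\|A(s,\bar X^{\phi}_s,\mu^0_s)\|_{V^*}^{\alpha/(\alpha-1)}ds$ and $\delta\int_0^{T}\|\bar X^{\phi}_u\|_V^{\alpha}du$ appear (the shift $u=t-\delta$ is a continuum shift, hence an honest $L^{\alpha}$ integral), all controlled by $({\mathbf{H}}{\mathbf{4}})$ and (\ref{a1}); the residual piece $\|\bar X^{\phi}_{t(\delta)}-\bar X^{\phi}_{t-\delta}\|_H^2$ is handled the same way via the chain rule on $[t-\delta,t(\delta)]$, so grid values only ever enter through the $H$-norm, where $\sup_t\|\bar X^{\phi}_t\|_H$ is available. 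To repair your proof you would need to replace the direct pairing against $\bar X^{\phi}_t-\bar X^{\phi}_{t(\delta)}$ by such a continuum-shift argument, or otherwise justify $V$-regularity of $\bar X^{\phi}$ at the grid times, which the hypotheses do not provide.
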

\begin{proof}
First according to Lemma \ref{l4}, it is obvious that
\begin{eqnarray}\label{14}
\int_0^{T}\|\bar{X}^{\phi}_t-\bar{X}^{\phi}_{t(\delta)}\|_{H}^2dt\leq\!\!\!\!\!\!\!\!&&\int_0^{\delta}\|\bar{X}^{\phi}_t-x\|_{H}^2dt+
\int_{\delta}^{T}\|\bar{X}^{\phi}_t-\bar{X}^{\phi}_{t(\delta)}\|_{H}^2dt
\nonumber\\
\leq\!\!\!\!\!\!\!\!&&2\delta\|x\|_{H}^2+2\delta\sup_{\psi\in S_M}\Big\{\sup_{t\in[0,T]}\|\bar{X}^{\psi}_t\|_H^2\Big\}+2\int_{\delta}^{T}\|\bar{X}^{\phi}_t-\bar{X}^{\phi}_{t-\delta}\|_{H}^2dt
\nonumber\\
\!\!\!\!\!\!\!\!&&+2\int_{\delta}^{T}\|\bar{X}^{\phi}_{t(\delta)}-\bar{X}^{\phi}_{t-\delta}\|_{H}^2dt.
\nonumber\\
\leq\!\!\!\!\!\!\!\!&&C_{T,M}\delta(1+\|x\|_{H}^2+\sup_{t\in[0,T]}\|X^0_t\|_H^2)+2\int_{\delta}^{T}\|\bar{X}^{\phi}_t-\bar{X}^{\phi}_{t-\delta}\|_{H}^2dt
\nonumber\\
\!\!\!\!\!\!\!\!&&+2\int_{\delta}^{T}\|\bar{X}^{\phi}_{t(\delta)}-\bar{X}^{\phi}_{t-\delta}\|_{H}^2dt.
\end{eqnarray}
Let us consider the first integral term on the right side of (\ref{14}). By using chain rule we have
\begin{eqnarray}\label{15}
\|\bar{X}^{\phi}_t-\bar{X}^{\phi}_{t-\delta}\|_{H}^2=\!\!\!\!\!\!\!\!&&2\int_{t-\delta}^{t}{}_{V^*}\langle A(s,\bar{X}^{\phi}_s,\mu^0_s),\bar{X}^{\phi}_s-\bar{X}^{\phi}_{t-\delta}\rangle_Vds
\nonumber\\
\!\!\!\!\!\!\!\!&&+2\int_{t-\delta}^{t}\langle B(s,\bar{X}^{\phi}_s,\mu^0_s)\phi_s,\bar{X}^{\phi}_s-\bar{X}^{\phi}_{t-\delta}\rangle_Hds
\nonumber\\
=:\!\!\!\!\!\!\!\!&&K_1(t)+K_2(t).
\end{eqnarray}
We shall estimate the terms $\int_\delta^{T}K_i(t)dt$, $i=1,2$, respectively.
\begin{eqnarray}\label{16}
\!\!\!\!\!\!\!\!&&\int_\delta^{T}K_1(t)dt
\nonumber\\
\leq\!\!\!\!\!\!\!\!&&2\int_\delta^{T}\int_{t-\delta}^{t}\|A(s,\bar{X}^{\phi}_s,\mu^0_s)\|_{V^*}\|\bar{X}^{\phi}_s-\bar{X}^{\phi}_{t-\delta}\|_Vdsdt
\nonumber\\
\leq\!\!\!\!\!\!\!\!&&2\Big[\int_\delta^{T}\int_{t-\delta}^{t}\|A(s,\bar{X}^{\phi}_s,\mu^0_s)\|_{V^*}^{\frac{\alpha}{\alpha-1}}dsdt\Big]^{\frac{\alpha-1}{\alpha}}
\Big[\int_\delta^{T}\int_{t-\delta}^{t}\|\bar{X}^{\phi}_s-\bar{X}^{\phi}_{t-\delta}\|_{V}^{\alpha}dsdt\Big]^{\frac{1}{\alpha}}
\nonumber\\
\leq\!\!\!\!\!\!\!\!&&C\Big[\delta\int_0^{T}\big(1+\|\bar{X}^{\phi}_s\|_V^{\alpha}+\mu^0_s(\|\cdot\|_H^2)\big)ds\Big]^{\frac{\alpha-1}{\alpha}}\Big[\delta\int_0^{T}\|\bar{X}^{\phi}_s\|_V^{\alpha}ds\Big]^{\frac{1}{\alpha}}
\nonumber\\
\leq\!\!\!\!\!\!\!\!&&C_T\delta\Big[1+\sup_{t\in[0,T]}\|X^0_t\|_H^2+\int_0^{T}\|\bar{X}^{\phi}_s\|_V^{\alpha}ds\Big]^{\frac{\alpha-1}{\alpha}}\Big[\int_0^{T}\|\bar{X}^{\phi}_s\|_V^{\alpha}ds\Big]^{\frac{1}{\alpha}}
\nonumber\\
\leq\!\!\!\!\!\!\!\!&&C_{T,M}\delta(1+\|x\|_{H}^2+\sup_{t\in[0,T]}\|X^0_t\|_H^2),
\end{eqnarray}
where we used Young's inequality and Lemma \ref{l4} in the last step.

Similarly, we have
\begin{eqnarray}\label{17}
\!\!\!\!\!\!\!\!&&\int_\delta^{T}K_2(t)dt
\nonumber\\
\leq\!\!\!\!\!\!\!\!&&2\Big[\int_\delta^{T}\int_{t-\delta}^{t}\|B(s,\bar{X}^{\phi}_s,\mu^0_s)\|_{L_2(U,H)}^2\|\phi_s\|_U^2dsdt\Big]^{\frac{1}{2}}
\Big[\int_\delta^{T}\int_{t-\delta}^{t}\|\bar{X}^{\phi}_s-\bar{X}^{\phi}_{t-\delta}\|_H^2dsdt\Big]^{\frac{1}{2}}
\nonumber\\
\leq\!\!\!\!\!\!\!\!&&C\Big[\delta\int_0^{T}\big(1+\|\bar{X}^{\phi}_s\|_H^2+\mu^0_s(\|\cdot\|_H^2)\big)\|\phi_s\|_U^2ds\Big]^{\frac{1}{2}}
\Big[\delta\int_0^{T}\|\bar{X}^{\phi}_s\|_H^2ds\Big]^{\frac{1}{2}}
\nonumber\\
\leq\!\!\!\!\!\!\!\!&&C_T\delta\Big[\Big(1+\sup_{\psi\in S_M}\Big\{\sup_{t\in[0,T]}\|\bar{X}^{\psi}_t\|_H^2\Big\}+\sup_{t\in[0,T]}\|X^0_t\|_H^2\Big)\int_0^{T}\|\phi_s\|_U^2ds\Big]^{\frac{1}{2}}\Big[\int_0^{T}\|\bar{X}^{\phi}_s\|_H^2ds\Big]^{\frac{1}{2}}
\nonumber\\
\leq\!\!\!\!\!\!\!\!&&C_{T,M}\delta(1+\|x\|_{H}^2+\sup_{t\in[0,T]}\|X^0_t\|_H^2).
\end{eqnarray}
Substituting (\ref{16})-(\ref{17}) into (\ref{15}) gives that
\begin{eqnarray}\label{18}
\!\!\!\!\!\!\!\!&&\int_\delta^{T}\|\bar{X}^{\phi}_t-\bar{X}^{\phi}_{t-\delta}\|_{H}^2dt\leq C_{T,M}\delta(1+\|x\|_{H}^2+\sup_{t\in[0,T]}\|X^0_t\|_H^2).
\end{eqnarray}
Following the similar arguments as in (\ref{18}), one can easily infer that
\begin{eqnarray}\label{19}
\!\!\!\!\!\!\!\!&&\int_\delta^{T}\|\bar{X}^{\phi}_{t(\delta)}-\bar{X}^{\phi}_{t-\delta}\|_{H}^2dt\leq C_{T,M}\delta(1+\|x\|_{H}^2+\sup_{t\in[0,T]}\|X^0_t\|_H^2).
\end{eqnarray}
Finally, combining (\ref{18})-(\ref{19}) with (\ref{14}) implies Lemma \ref{l6}. \hspace{\fill}$\Box$
\end{proof}

After the above  preparations, we are in the position to verify the \textbf{Condition (A)} (ii).

\begin{theorem}\label{t3}
Suppose that $({\mathbf{H}}{\mathbf{1}})$-$({\mathbf{H}}{\mathbf{5}})$ hold. Let $\{\phi^n: n\in\mathbb{N}\}\subset S_M$ for any $M<\infty$ such that $\phi^n$ converges to element $\phi$ in $S_M$ as $n\to\infty$, then
$$\lim_{n\to\infty}\sup_{t\in[0,T]}\Big\|\mathcal{G}^0\Big(\int_0^t\phi^n_sds\Big)-\mathcal{G}^0\Big(\int_0^t\phi_sds\Big)\Big\|_H=0.$$
\end{theorem}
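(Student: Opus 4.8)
The plan is to estimate $Z^n_t:=\bar X^{\phi^n}_t-\bar X^{\phi}_t$ directly in the $H$-norm, exploiting the fact that (unlike in Theorem~\ref{t2}) both skeleton equations carry the \emph{same} frozen measure flow $\mu^0_t:=\mathscr{L}_{X^0_t}$, so that no Wasserstein contributions will survive. First I would collect the a~priori information from Lemma~\ref{l4}: the family $\{\bar X^{\phi^n}\}_n$ is bounded in $C([0,T];H)\cap L^{\alpha}([0,T];V)$ uniformly over $\phi^n\in S_M$, whence, by the growth bound $\|B(s,u,\mu)\|_{L_2(U,H)}^2\le C(1+\|u\|_H^2+\mu(\|\cdot\|_H^2))$ used in the proof of Lemma~\ref{l4} together with $\mu^0_s(\|\cdot\|_H^2)=\|X^0_s\|_H^2$, the maps $\Psi_n(s):=B(s,\bar X^{\phi^n}_s,\mu^0_s)$ and $\Psi(s):=B(s,\bar X^{\phi}_s,\mu^0_s)$ are bounded in $L^{\infty}([0,T];L_2(U,H))$ by a constant $C_{T,M}$.

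Next, applying the chain rule to $\|Z^n_t\|_H^2$ in the deterministic equation~(\ref{e4}), using monotonicity in $({\mathbf{H}}{\mathbf{3}})$ for the drift part (here $\mathbb{W}_{2,H}(\mu^0_s,\mu^0_s)=0$) and writing the diffusion increment as $\big(\Psi_n(s)-\Psi(s)\big)\phi^n_s+\Psi(s)\big(\phi^n_s-\phi_s\big)$, with the first summand absorbed by Young's inequality and the Lipschitz bound in $({\mathbf{H}}{\mathbf{3}})$, I expect to arrive at
$$\|Z^n_t\|_H^2\le\int_0^t\big(C+C\|\phi^n_s\|_U^2\big)\|Z^n_s\|_H^2\,ds+2\int_0^t\big\langle\Psi(s)\big(\phi^n_s-\phi_s\big),Z^n_s\big\rangle_H\,ds.$$
The entire difficulty is concentrated in the last term: it pairs the only \emph{weakly} convergent sequence $\phi^n-\phi$ with the merely \emph{bounded} sequence $Z^n$, and this is precisely the point at which the absence of a compact embedding in the Gelfand triple is felt. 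I would resolve it by the time-discretization estimate of Lemma~\ref{l6}: splitting $Z^n_s=Z^n_{s(\delta)}+(Z^n_s-Z^n_{s(\delta)})$ breaks the cross term into $P^n_\delta(t)+Q^n_\delta(t)$.

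For $P^n_\delta(t)=2\int_0^t\langle\Psi(s)(\phi^n_s-\phi_s),Z^n_s-Z^n_{s(\delta)}\rangle_H\,ds$, the Cauchy--Schwarz inequality together with $\|\Psi\|_{L^{\infty}(L_2(U,H))}\le C_{T,M}$, $\int_0^T\|\phi^n_s-\phi_s\|_U^2\,ds\le 4M$ and Lemma~\ref{l6} (applied both to $\bar X^{\phi^n}$ and to $\bar X^{\phi}$) gives $\sup_{t\in[0,T]}|P^n_\delta(t)|\le C_{T,M}\sqrt{\delta}$, \emph{uniformly in $n$}. For $Q^n_\delta(t)=2\int_0^t\langle\Psi(s)(\phi^n_s-\phi_s),Z^n_{s(\delta)}\rangle_H\,ds$ I would use that $Z^n_{s(\delta)}$ is constant on each grid interval, so that, with $\zeta^n(r):=\int_0^r\Psi(s)(\phi^n_s-\phi_s)\,ds$,
$$Q^n_\delta(t)=2\sum_{k}\big\langle\zeta^n\big((k{+}1)\delta\wedge t\big)-\zeta^n\big(k\delta\wedge t\big),\,Z^n_{k\delta}\big\rangle_H ,$$
a sum with only $\lceil T/\delta\rceil+1$ terms for fixed $\delta$, in which $\sup_{k,n}\|Z^n_{k\delta}\|_H\le C_{T,M}$ by Lemma~\ref{l4}. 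The key observation is that the linear map $L^2([0,T];U)\ni\psi\mapsto\int_0^{\cdot}\Psi(s)\psi_s\,ds\in C([0,T];H)$ sends weakly convergent sequences to uniformly (strongly) convergent ones: since $B$ takes values in the Hilbert--Schmidt operators, $\int_0^T\|\Psi(s)\|_{L_2(U,H)}^2\,ds<\infty$ makes $\psi\mapsto\int_0^r\Psi(s)\psi_s\,ds$ a Hilbert--Schmidt, hence compact, operator for each $r$, while $\|\zeta^n(t)-\zeta^n(t')\|_H\le 2\sqrt{M}\big(\int_{t'\wedge t}^{t\vee t'}\|\Psi(s)\|_{L_2(U,H)}^2\,ds\big)^{1/2}$ gives equicontinuity, so by Arzel\`a--Ascoli $\sup_{r\in[0,T]}\|\zeta^n(r)\|_H\to0$. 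Consequently $\sup_{t}|Q^n_\delta(t)|\le C_{T,M,\delta}\sup_{r}\|\zeta^n(r)\|_H\to0$ as $n\to\infty$. This Hilbert--Schmidt compactness of the convolution-type operator is exactly what replaces the missing compactness of $V\hookrightarrow H$.

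Finally I would insert the two estimates, apply Gronwall's inequality (the factor $\exp\big(\int_0^T(C+C\|\phi^n_s\|_U^2)\,ds\big)$ is bounded by a constant $C_{T,M}$ since $\phi^n\in S_M$), and obtain $\sup_{t\in[0,T]}\|Z^n_t\|_H^2\le C_{T,M}\big(\sup_t|P^n_\delta(t)|+\sup_t|Q^n_\delta(t)|\big)$. Letting $n\to\infty$ gives $\limsup_{n}\sup_{t}\|Z^n_t\|_H^2\le C_{T,M}\sqrt{\delta}$ for every $\delta>0$, and then $\delta\downarrow0$ yields $\sup_{t\in[0,T]}\|\bar X^{\phi^n}_t-\bar X^{\phi}_t\|_H\to0$, which is the claim since $\mathcal{G}^0\big(\int_0^{\cdot}\phi^n_s\,ds\big)=\bar X^{\phi^n}$ and $\mathcal{G}^0\big(\int_0^{\cdot}\phi_s\,ds\big)=\bar X^{\phi}$. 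The only genuinely delicate step is the cross-term analysis; the two ingredients that defuse it are the frozen measure flow (which removes all distribution-dependent contributions and reduces the situation to a classical variational skeleton equation) and the grid-freezing of $Z^n$, which converts the bad pairing into a finite sum of inner products of a bounded vector with the strongly convergent quantity $\zeta^n$.
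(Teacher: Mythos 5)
Your proposal is correct, and up to the Gronwall reduction it coincides with the paper's argument: both apply the chain rule to $\|Z^n_t\|_H^2$, use $({\mathbf{H}}{\mathbf{3}})$ with the Wasserstein terms vanishing because both skeleton equations carry the same frozen flow $\mu^0$, reduce everything to the cross term $2\int_0^t\langle B(s,\bar{X}^{\phi}_s,\mu^0_s)(\phi^n_s-\phi_s),Z^n_s\rangle_H ds$, and handle the fluctuation $Z^n_s-Z^n_{s(\delta)}$ by the grid estimate of Lemma \ref{l6}. Where you genuinely diverge is the frozen part: the paper additionally freezes the coefficient itself, replacing $B(s,\bar{X}^{\phi}_s,\mu^0_s)$ by $B(s(\delta),\bar{X}^{\phi}_{s(\delta)},\mu^0_{s(\delta)})$, which produces the remainder terms $\widetilde{I}_2,\widetilde{I}_3,\widetilde{I}_4$ (the time replacement being exactly where $({\mathbf{H}}{\mathbf{5}})$ enters), and only then uses compactness of the single Hilbert--Schmidt operator $B(k\delta,\bar{X}^{\phi}_{k\delta},\mu^0_{k\delta})$ acting on the weakly null increments $\int_{k\delta}^{(k+1)\delta}(\phi^n_s-\phi_s)ds$. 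You instead keep the coefficient intact and observe that the integrated map $\psi\mapsto\int_0^r B(s,\bar{X}^{\phi}_s,\mu^0_s)\psi_s ds$ is itself Hilbert--Schmidt from $L^2([0,T];U)$ to $H$ (its squared Hilbert--Schmidt norm equals $\int_0^r\|B(s,\bar{X}^{\phi}_s,\mu^0_s)\|_{L_2(U,H)}^2ds$), hence compact, so $\zeta^n(r)\to0$ in $H$ for each $r$, and your equicontinuity bound upgrades this to uniform convergence; pairing with the uniformly bounded $Z^n_{k\delta}$ then kills $Q^n_\delta$ for each fixed $\delta$. This is a correct and in fact leaner route: it eliminates three of the paper's five remainder terms and, notably, never uses the time-H\"older hypothesis $({\mathbf{H}}{\mathbf{5}})$, so your proof of Theorem \ref{t3} works under $({\mathbf{H}}{\mathbf{1}})$--$({\mathbf{H}}{\mathbf{4}})$ alone, whereas the paper's does not. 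Two minor remarks: pointwise convergence plus equicontinuity yields uniform convergence of $\zeta^n$ directly, so the appeal to Arzel\`a--Ascoli is not needed; and your claim that $s\mapsto B(s,\bar{X}^{\phi}_s,\mu^0_s)$ lies in $L^{\infty}([0,T];L_2(U,H))$ tacitly requires a uniform-in-time bound on $\|B(s,0,\delta_0)\|_{L_2(U,H)}$ beyond the Lipschitz condition $({\mathbf{H}}{\mathbf{3}})$ --- but the paper's own estimate (\ref{23}) makes exactly the same implicit use, so this is not a gap relative to the paper's level of rigor.
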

\begin{proof}
Let sequence $\{\bar{X}^{\phi^n}\}$ be the solution of Eq.~(\ref{e4}) with $\phi^n\in S_M$ replacing $\phi$, that is,
\begin{equation*}
\left\{ \begin{aligned}
&\frac{d\bar{X}^{\phi^n}_t}{dt}=A(t,\bar{X}^{\phi^n}_t,\mu^0_t)dt+B(t,\bar{X}^{\phi^n}_t,\mu^0_t)\phi^n_t,\\
&\bar{X}^{\phi^n}_0=x\in H.
\end{aligned} \right.
\end{equation*}

Let us denote $Z^{n}_t:=\bar{X}^{\phi^n}_t-\bar{X}^{\phi}_t$ satisfying
\begin{equation*}
\left\{ \begin{aligned}
&\frac{dZ^{n}_t}{dt}=A(t,\bar{X}^{\phi^n}_t,\mu^0_t)-A(t,\bar{X}^{\phi}_t,\mu^0_t)+B(t,\bar{X}^{\phi^n}_t,\mu^0_t)\phi^n_t-B(t,\bar{X}^{\phi}_t,\mu^0_t)\phi_t,\\
&Z^{n}_0=0.
\end{aligned} \right.
\end{equation*}
By the chain rule, we have
\begin{eqnarray}\label{20}
\|Z^{n}_t\|_H^2=\!\!\!\!\!\!\!\!&&2\int_0^t{}_{V^*}\langle A(s,\bar{X}^{\phi^n}_s,\mu^0_s)-A(s,\bar{X}^{\phi}_s,\mu^0_s),Z^{n}_s\rangle_Vds
\nonumber\\
\!\!\!\!\!\!\!\!&&+2\int_0^t\langle B(s,\bar{X}^{\phi^n}_s,\mu^0_s)\phi^n_s-B(s,\bar{X}^{\phi}_s,\mu^0_s)\phi_s,Z^{n}_s\rangle_Hds
\nonumber\\
=\!\!\!\!\!\!\!\!&&2\int_0^t{}_{V^*}\langle A(s,\bar{X}^{\phi^n}_s,\mu^0_s)-A(s,\bar{X}^{\phi}_s,\mu^0_s),Z^{n}_s\rangle_Vds
\nonumber\\
\!\!\!\!\!\!\!\!&&+2\int_0^t\langle \big(B(s,\bar{X}^{\phi^n}_s,\mu^0_s)-B(s,\bar{X}^{\phi}_s,\mu^0_s)\big)\phi^n_s,Z^{n}_s\rangle_Hds
\nonumber\\
\!\!\!\!\!\!\!\!&&+2\int_0^t\langle B(s,\bar{X}^{\phi}_s,\mu^0_s)(\phi^n_s-\phi_s),Z^{n}_s\rangle_Hds
\nonumber\\
=:\!\!\!\!\!\!\!\!&&\sum_{i=1}^3I_i(t).
\end{eqnarray}
We now aim to estimate terms $I_i$, $i=1,2,3$, respectively. Using Cauchy-Schwarz's inequality and ({\textbf{H}}{\textbf{3}}) leads to
\begin{eqnarray}\label{21}
\!\!\!\!\!\!\!\!&&I_1(t)+I_2(t)
\nonumber\\
\leq\!\!\!\!\!\!\!\!&&\int_0^t2{}_{V^*}\langle A(s,\bar{X}^{\phi^n}_s,\mu^0_s)-A(s,\bar{X}^{\phi}_s,\mu^0_s),Z^{n}_s\rangle_V+\|B(s,\bar{X}^{\phi^n}_s,\mu^0_s)-B(s,\bar{X}^{\phi}_s,\mu^0_s)\|_{L_2(U,H)}^2ds
\nonumber\\
\!\!\!\!\!\!\!\!&&+\int_0^t\|\phi^n_s\|_U^2\|Z^{n}_s\|_H^2ds
\nonumber\\
\leq\!\!\!\!\!\!\!\!&&C\int_0^t(1+\|\phi^n_s\|_U^2)\|Z^{n}_s\|_H^2ds.
\end{eqnarray}
From the fact $\phi^n\in S_M$ and Gronwall's inequality we obtain that
\begin{eqnarray}\label{27}
\sup_{t\in[0,T]}\|Z^{n}_t\|_H^2\leq C_{T,M}\Big[\sup_{t\in[0,T]}|I_3(t)|\Big].
\end{eqnarray}
It is clear that the term $\sup_{t\in[0,T]}|I_3(t)|$ can be controlled as follows
\begin{eqnarray}\label{22}
\sup_{t\in[0,T]}|I_3(t)|\leq\sum_{i=1}^5\widetilde{I}_i(n),
\end{eqnarray}
where
\begin{eqnarray*}
\widetilde{I}_1(n):=\!\!\!\!\!\!\!\!&&\sup_{t\in[0,T]}\Big|\int_0^{t}\langle B(s,\bar{X}^{\phi}_s,\mu^0_s)(\phi^n_s-\phi_s),Z^n_s-Z^n_{s(\delta)}\rangle_{H}ds\Big|,
\nonumber\\
\widetilde{I}_2(n):=\!\!\!\!\!\!\!\!&&\sup_{t\in[0,T]}\Big|\int_0^{t}\langle \big(B(s,\bar{X}^{\phi}_s,\mu^0_s)-B(s(\delta),\bar{X}^{\phi}_{s},\mu^0_s)\big)(\phi^n_s-\phi_s),Z^n_{s(\delta)}\rangle_{H}ds\Big|,
\nonumber\\
\widetilde{I}_3(n):=\!\!\!\!\!\!\!\!&&\sup_{t\in[0,T]}\Big|\int_0^{t}\langle \big(B(s(\delta),\bar{X}^{\phi}_s,\mu^0_s)-B(s(\delta),\bar{X}^{\phi}_{s(\delta)},\mu^0_{s(\delta)})\big)(\phi^n_s-\phi_s),Z^n_{s(\delta)}\rangle_{H}ds\Big|,
\nonumber\\
\widetilde{I}_4(n):=\!\!\!\!\!\!\!\!&&\sup_{t\in[0,T]}\Big|\int_{t(\delta)}^{t}\langle B(s(\delta),\bar{X}^{\phi}_{s(\delta)},\mu^0_{s(\delta)})(\phi^n_s-\phi_s),Z^n_{s(\delta)}\rangle_{H}ds\Big|,
\nonumber\\
\widetilde{I}_5(n):=\!\!\!\!\!\!\!\!&&\sum_{k=0}^{[T/\delta]-1}\Big|\langle B(k\delta,\bar{X}^{\phi}_{k\delta},\mu^0_{k\delta})\int_{k\delta}^{(k+1)\delta}(\phi^n_s-\phi_s)ds,Z^n_{k\delta}\rangle_{H}\Big|.
\end{eqnarray*}
Making use of Cauchy-Schwarz's inequality and H\"{o}lder's inequality, we have
\begin{eqnarray}\label{23}
\widetilde{I}_1(n)\leq\!\!\!\!\!\!\!\!&&\int_0^T\|B(t,\bar{X}^{\phi}_t,\mu^0_t)\|_{L_2(U,H)}\|\phi^n_t-\phi_t\|_U\|Z^n_t-Z^n_{t(\delta)}\|_Hdt
\nonumber\\
\leq\!\!\!\!\!\!\!\!&&\Big\{\int_0^T\|B(t,\bar{X}^{\phi}_t,\mu^0_t)\|_{L_2(U,H)}^2\|\phi^n_t-\phi_t\|_U^2dt\Big\}^{\frac{1}{2}}
\Big\{\int_0^T\|Z^n_t-Z^n_{t(\delta)}\|_H^2dt\Big\}^{\frac{1}{2}}
\nonumber\\
\leq\!\!\!\!\!\!\!\!&&C\Big\{\int_0^T\big(1+\|\bar{X}^{\phi}_t\|_H^2+\mu^0_t(\|\cdot\|_H^2)\big)\|\phi^n_t-\phi_t\|_U^2dt\Big\}^{\frac{1}{2}}
\nonumber\\
\!\!\!\!\!\!\!\!&&\times\Big\{\int_0^T(\|\bar{X}^{\phi^n}_t-\bar{X}^{\phi^n}_{t(\delta)}\|_H^2+\|\bar{X}^{\phi}_t-\bar{X}^{\phi}_{t(\delta)} \|_H^2)dt\Big\}^{\frac{1}{2}}
\nonumber\\
\leq\!\!\!\!\!\!\!\!&&C_{T,M}\Big\{\big(1+\sup_{\psi\in S_M}\{\sup_{t\in[0,T]}\|\bar{X}^{\psi}_t\|_H^2\}+\sup_{t\in[0,T]}\|X^0_t\|_H^2\big)\int_0^T\|\phi^n_t-\phi_t\|_U^2dt\Big\}^{\frac{1}{2}}
\nonumber\\
\!\!\!\!\!\!\!\!&&\times\Big\{\delta(1+\|x\|_{H}^2+\sup_{t\in[0,T]}\|X^0_t\|_H^2)\Big\}^{\frac{1}{2}}
\nonumber\\
\leq\!\!\!\!\!\!\!\!&&C_{T,M}\delta^{\frac{1}{2}}\big(1+\|x\|_{H}^2+\sup_{t\in[0,T]}\|X^0_t\|_H^2\big),
\end{eqnarray}
where we used Lemma \ref{l6} in the forth inequality, Lemma \ref{l4} in the last step and the fact that $\phi^n,\phi\in S_M$.

In terms of the condition ({\textbf{H}}{\textbf{5}}), it follows that
\begin{eqnarray}\label{24}
\widetilde{I}_2(n)\leq\!\!\!\!\!\!\!\!&&\int_0^T\|B(t,\bar{X}^{\phi}_t,\mu^0_t)-B(t(\delta),\bar{X}^{\phi}_{t},\mu^0_t)\|_{L_2(U,H)}\|\phi^n_t-\phi_t\|_U\|Z^n_{t(\delta)}\|_Hdt
\nonumber\\
\leq\!\!\!\!\!\!\!\!&&C\sqrt{\sup_{\psi\in S_M}\{\sup_{t\in[0,T]}\|\bar{X}^{\psi}_t\|_H^2\}}\int_0^T\delta^\gamma\Big(1+\|\bar{X}^{\phi}_{t}\|_H+\sqrt{\mu^0_t(\|\cdot\|_H^2)}\Big)\|\phi^n_t-\phi_t\|_Udt
\nonumber\\
\leq\!\!\!\!\!\!\!\!&&C_{T,M}\delta^{\gamma}\big(1+\|x\|_{H}^2+\sup_{t\in[0,T]}\|X^0_t\|_H^2\big).
\end{eqnarray}
Similar to (\ref{23}), using H\"{o}lder's inequality leads to
\begin{eqnarray*}
\widetilde{I}_3(n)\leq\!\!\!\!\!\!\!\!&&C\int_0^T\Big(\|\bar{X}^{\phi}_t-\bar{X}^{\phi}_{t(\delta)}\|_H+\mathbb{W}_{2,H}(\mu^0_t,\mu^0_{t(\delta)})\Big)\|\phi^n_t-\phi_t\|_U\|Z^n_{t(\delta)}\|_Hdt
\nonumber\\
\leq\!\!\!\!\!\!\!\!&&C\Big\{\int_0^T\|\bar{X}^{\phi^n}_{t(\delta)}-\bar{X}^{\phi}_{t(\delta)}\|_H^2\|\bar{X}^{\phi^n}_t-\bar{X}^{\phi^n}_{t(\delta)}\|_H^2dt\Big\}^{\frac{1}{2}}\Big\{\int_0^T\|\phi^n_t-\phi_t\|_U^2dt\Big\}^{\frac{1}{2}}
\nonumber\\
\!\!\!\!\!\!\!\!&&+C\Big\{\int_0^T\|\bar{X}^{\phi^n}_{t(\delta)}-\bar{X}^{\phi}_{t(\delta)}\|_H^2\mathbb{W}_{2,H}(\mu^0_t,\mu^0_{t(\delta)})^2dt\Big\}^{\frac{1}{2}}\Big\{\int_0^T\|\phi^n_t-\phi_t\|_U^2dt\Big\}^{\frac{1}{2}}
\nonumber\\
\leq\!\!\!\!\!\!\!\!&&C_M\sqrt{\sup_{\psi\in S_M}\{\sup_{t\in[0,T]}\|\bar{X}^{\psi}_t\|_H^2\}}\Big\{\int_0^T\|\bar{X}^{\phi^n}_t-\bar{X}^{\phi^n}_{t(\delta)}\|_H^2dt\Big\}^{\frac{1}{2}}
\nonumber\\
\!\!\!\!\!\!\!\!&&+C_M\sqrt{\sup_{\psi\in S_M}\{\sup_{t\in[0,T]}\|\bar{X}^{\psi}_t\|_H^2\}}\Big\{\int_0^T\|X^0_t-X^0_{t(\delta)}\|_H^2dt\Big\}^{\frac{1}{2}}.
\end{eqnarray*}
Note that $X^0\in C([0,T];H)$, without loss of generality, we assume that $\|X^0_t-X^0_{t(\delta)}\|_H\leq\delta$. By Lemma \ref{l4} and \ref{l6}, it is clear that
\begin{eqnarray}\label{25}
\widetilde{I}_3(n)\leq\!\!\!\!\!\!\!\!&&C_{T,M}\delta^{\frac{1}{2}}\big(1+\|x\|_{H}^2+\sup_{t\in[0,T]}\|X^0_t\|_H^2\big).
\end{eqnarray}
By H\"{o}lder's inequality, one can infer that
\begin{eqnarray}\label{26}
\widetilde{I}_4(n)\leq\!\!\!\!\!\!\!\!&&\sqrt{\sup_{\psi\in S_M}\{\sup_{t\in[0,T]}\|\bar{X}^{\psi}_t\|_H^2\}}\Big\{\sup_{t\in[0,T]}\Big|\int_{t(\delta)}^{t} \|B(s(\delta),\bar{X}^{\phi}_{s(\delta)},\mu^0_{s(\delta)})\|_{L_2(U,H)}\|\phi^n_s-\phi_s\|_Uds\Big|\Big\}
\nonumber\\
\leq\!\!\!\!\!\!\!\!&&\delta^{\frac{1}{2}}\sqrt{\sup_{\psi\in S_M}\{\sup_{t\in[0,T]}\|\bar{X}^{\psi}_t\|_H^2\}}\Big\{\sup_{t\in[0,T]}\Big|\int_{t(\delta)}^{t} \|B(s(\delta),\bar{X}^{\phi}_{s(\delta)},\mu^0_{s(\delta)})\|_{L_2(U,H)}^2\|\phi^n_s-\phi_s\|_U^2ds\Big|\Big\}^{\frac{1}{2}}
\nonumber\\
\leq\!\!\!\!\!\!\!\!&&C\delta^{\frac{1}{2}}\sqrt{\sup_{\psi\in S_M}\{\sup_{t\in[0,T]}\|\bar{X}^{\psi}_t\|_H^2\}}\Big\{\int_0^T\big(1+\|\bar{X}^{\phi}_{t(\delta)}\|_H^2+\mu^0_{t(\delta)}(\|\cdot\|_H^2)\big)\|\phi^n_t-\phi_t\|_U^2dt\Big\}^{\frac{1}{2}}
\nonumber\\
\leq\!\!\!\!\!\!\!\!&&C_{T,M}\delta^{\frac{1}{2}}\big(1+\|x\|_{H}^2+\sup_{t\in[0,T]}\|X^0_t\|_H^2\big).
\end{eqnarray}

Note that $\phi^n\to\phi$ in the weak topology of $S_M$, thus for any $a,b\in[0,T]$,~$a<b$, the integral $\int_a^b\phi_s^n ds\to\int_a^b\phi_sds$ weakly in $U$. For the term $\widetilde{I}_5(n)$, since $B(k\delta,\bar{X}^{\phi}_{k\delta},\mu^0_{k\delta})$ is a compact operator, the sequence $B(k\delta,\bar{X}^{\phi}_{k\delta},\mu^0_{k\delta})\int_{k\delta}^{(k+1)\delta}(\phi^n_s-\phi_s)ds$ strongly converges to $0$ in $H$ for any fixed $k$, as $n\to\infty$. Consequently, the boundedness of $Z^n_{k\delta}$ implies that
\begin{eqnarray}\label{28}
\lim_{n\to\infty}\widetilde{I}_5(n)=0.
\end{eqnarray}
Finally, combining (\ref{27})-(\ref{28}) and taking $\delta\to0$, one can conclude that
$$\lim_{n\to\infty}\sup_{t\in[0,T]}\Big\|\mathcal{G}^0\Big(\int_0^t\phi^n_sds\Big)-\mathcal{G}^0\Big(\int_0^t\phi_sds\Big)\Big\|_H=\lim_{n\to\infty}\sup_{t\in[0,T]}\|Z_t^{n}\|_H=0.$$

The proof is completed.  \hspace{\fill}$\Box$
\end{proof}

Now we are able to complete the proof of our main result of Theorem \ref{t1}.

\vspace{1mm}
\textbf{Proof of Theorem \ref{t1}:} Combining Theorem \ref{t2} with Theorem \ref{t3}, we know that Lemma \ref{l3} implies that $\{X^{\varepsilon}\}$ satisfies the Laplace principle, which is equivalent to the LDP on $C([0,T]; H)$ with a good rate function $I$ defined in (\ref{rf}). \hspace{\fill}$\Box$

\section{Application to examples}
\setcounter{equation}{0}
 \setcounter{definition}{0}
This section is devoted to applying the general result obtained in Theorem \ref{t1} to establish the LDP result for several concrete McKean-Vlasov type SPDEs.

In the sequel, we always assume $\Lambda\subset\mathbb{R}^d$ as a bounded domain with smooth boundary $\partial\Lambda$. Denote by
$C_0^\infty(\Lambda, \mathbb{R}^d)$ the space of all smooth functions from $\Lambda$ to $\mathbb{R}^d$ with compact support.  For each $r\ge 1$, let $L^r(\Lambda, \mathbb{R}^d)$ be the vector valued $L^r$-space endowed with the norm $\|\cdot\|_{L^r}$.
For any integer $m>0$, let us denote by $W_0^{m,r}(\Lambda, \mathbb{R}^d)$ the classical Sobolev space (with Dirichlet boundary condition) from domain $\Lambda$
to $\mathbb{R}^d$ with the following (equivalent) norm:
$$ \|u\|_{W^{m,r}} = \left( \sum_{ |\alpha|= m} \int_{\Lambda} |D^\alpha u|^rd x \right)^\frac{1}{r}.$$

\subsection{McKean-Vlasov stochastic porous media equation}
The first example is the McKean-Vlasov stochastic porous media equation, which is arisen originally as a model for gas flow in a porous medium (cf. e.g.\cite{LR1}).

For any $r\geq 2$, we set the following Gelfand triple
$$V:=L^r(\Lambda)\subset(W_0^{1,2}(\Lambda))^*=:H\subset(L^r(\Lambda))^*=V^*,$$
and consider equation
\begin{equation}\label{e9}
dX^\varepsilon_t=\Delta\Psi(t,X^\varepsilon_t,\mathscr{L}_{X^\varepsilon_t})dt+\sqrt{\varepsilon}B(t,X^\varepsilon_t,\mathscr{L}_{X^\varepsilon_t})dW_t,~X^\varepsilon_0=x\in H,
\end{equation}
where $W_t$ is a cylindrical Wiener process defined
on a probability space $(\Omega,\mathscr {F},\mathscr
{F}_t,\mathbb{P})$ taking values in a sparable Hilbert space $U$, maps
$$\Psi:[0,T]\times V\times\mathscr{P}(H)\to L^{\frac{r}{r-1}}(\Lambda)$$
and
$$B:[0,T]\times V\times\mathscr{P}(H)\to L_2(U,H)$$
are some measurable maps.

We recall an important lemma as follows (see e.g.\cite[Lemma 4.1.13]{LR1}).
\begin{lemma}\label{l7}
The map
$$\Delta:W_0^{1,2}(\Lambda)\to (L^r(\Lambda))^*$$
could be extend to a linear isometry
$$\Delta:L^{\frac{r}{r-1}}(\Lambda)\to (L^r(\Lambda))^*.$$
Furthermore, for any $u\in L^{\frac{r}{r-1}}(\Lambda)$, $v\in L^r(\Lambda)$
$$_{V^*}\langle-\Delta u,v\rangle_V=_{L^{\frac{r}{r-1}}}\langle u,v\rangle_{L^r}=\int_\Lambda u(\xi)v(\xi)d\xi.$$
\end{lemma}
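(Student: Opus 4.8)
The plan is to recognize this as a purely functional-analytic statement: the map $\Delta$, restricted to $W_0^{1,2}(\Lambda)$ but measured with the $L^{r/(r-1)}$-norm, is an isometry into $V^*=(L^r(\Lambda))^*$, and then to extend it by density. First I would record the ingredients of the Gelfand triple $V=L^r(\Lambda)\subset H=(W_0^{1,2}(\Lambda))^*\subset V^*$. Since $r\ge 2$ and $\Lambda$ is bounded, the inclusions $C_c^\infty(\Lambda)\subset W_0^{1,2}(\Lambda)\subset L^2(\Lambda)\subset L^{r/(r-1)}(\Lambda)$ are continuous and dense, and, using Poincar\'e's inequality together with the standard triple $W_0^{1,2}(\Lambda)\subset L^2(\Lambda)\subset (W_0^{1,2}(\Lambda))^*$, also $L^r(\Lambda)\subset L^2(\Lambda)\subset H$ continuously and densely; in particular $\Delta u\in H\subset V^*$ for every $u\in W_0^{1,2}(\Lambda)$, so the map in the statement is well defined. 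I would also recall that $-\Delta\colon W_0^{1,2}(\Lambda)\to (W_0^{1,2}(\Lambda))^*=H$ is exactly the Riesz isomorphism for the inner product $\langle u,v\rangle_{W_0^{1,2}}=\int_\Lambda\nabla u\cdot\nabla v\,d\xi$, which gives the concrete description $\langle f,g\rangle_H=\int_\Lambda f(\xi)\,[(-\Delta)^{-1}g](\xi)\,d\xi$ for $f,g\in L^2(\Lambda)$.

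The heart of the proof is the pairing identity. For $u\in W_0^{1,2}(\Lambda)$ and $v\in C_c^\infty(\Lambda)$ we have $v\in V\cap L^2(\Lambda)$, so by the relation ${}_{V^*}\langle\cdot,\cdot\rangle_V|_{H\times V}=\langle\cdot,\cdot\rangle_H$ and a direct integration by parts using the above representation of $\langle\cdot,\cdot\rangle_H$,
\[ {}_{V^*}\langle \Delta u, v\rangle_V=\langle \Delta u, v\rangle_H=\int_\Lambda u(\xi)v(\xi)\,d\xi. \]
Since ${}_{V^*}\langle \Delta u,\cdot\rangle_V$ is continuous on $V=L^r(\Lambda)$ and $C_c^\infty(\Lambda)$ is dense there, and since $L^p$--$L^{p/(p-1)}$ duality yields $\sup\{\,|\int_\Lambda uw\,d\xi|:w\in C_c^\infty(\Lambda),\ \|w\|_{L^r}\le 1\,\}=\|u\|_{L^{r/(r-1)}}$, we conclude $\|\Delta u\|_{V^*}=\|u\|_{L^{r/(r-1)}}$ for every $u\in W_0^{1,2}(\Lambda)$ (note $u\in L^{r/(r-1)}(\Lambda)$ automatically, as $W_0^{1,2}(\Lambda)\subset L^{r/(r-1)}(\Lambda)$ on the bounded domain $\Lambda$). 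Hence $\Delta$ is a linear isometry from $\bigl(W_0^{1,2}(\Lambda),\|\cdot\|_{L^{r/(r-1)}}\bigr)$ into $V^*$.

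To finish, I would invoke the bounded linear extension theorem: $W_0^{1,2}(\Lambda)$ contains $C_c^\infty(\Lambda)$ and is therefore dense in $L^{r/(r-1)}(\Lambda)$ (since $r/(r-1)<\infty$), so the isometry extends uniquely to a linear isometry $\Delta\colon L^{r/(r-1)}(\Lambda)\to V^*$. Passing to the limit along approximating sequences in the pairing identity above shows ${}_{V^*}\langle\Delta u,v\rangle_V=\int_\Lambda u(\xi)v(\xi)\,d\xi$ for all $u\in L^{r/(r-1)}(\Lambda)$, $v\in L^r(\Lambda)$, which is the asserted formula and simultaneously identifies $\Delta$ with the canonical isometric isomorphism $L^{r/(r-1)}(\Lambda)\cong (L^r(\Lambda))^*$; surjectivity onto $V^*$ is then automatic because $1<r<\infty$.

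The only delicate point is the pairing computation in the second paragraph, where three nested identifications must be handled simultaneously — the Riesz isomorphism $-\Delta\colon W_0^{1,2}(\Lambda)\to H$, the Gelfand embeddings $L^r(\Lambda)\subset H\subset V^*$, and the $L^p$-duality — and the integration by parts carried out with the sign conventions consistent with \cite{LR1}. Once that identity is established, the density extension and the surjectivity are entirely routine.
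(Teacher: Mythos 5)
Your overall architecture is exactly the standard one behind this statement (the paper itself gives no proof, only the citation to \cite[Lemma 4.1.13]{LR1}): compute the action of $\Delta u$, $u\in W_0^{1,2}(\Lambda)$, on $v\in V$ through the $H$-inner product, identify $\|\Delta u\|_{V^*}=\|u\|_{L^{r/(r-1)}}$ by $L^p$-duality and density of $C_c^\infty(\Lambda)$, and extend by continuity; the density, extension and surjectivity remarks are all fine.

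The genuine problem is the sign in the step you yourself single out as the delicate one. With the conventions you set up (Dirichlet inner product on $W_0^{1,2}(\Lambda)$, Riesz isomorphism $-\Delta$, hence $\langle f,g\rangle_H=\int_\Lambda f\,(-\Delta)^{-1}g\,d\xi$ for $f,g\in L^2$), the integration by parts does \emph{not} give $\langle\Delta u,v\rangle_H=\int_\Lambda uv\,d\xi$. Carrying it out, for $u\in W_0^{1,2}(\Lambda)$ and $v\in C_c^\infty(\Lambda)$,
\begin{equation*}
\langle\Delta u,v\rangle_H={}_{H}\langle\Delta u,(-\Delta)^{-1}v\rangle_{W_0^{1,2}}
=-\int_\Lambda\nabla u\cdot\nabla\bigl((-\Delta)^{-1}v\bigr)\,d\xi
=-\int_\Lambda u\,v\,d\xi,
\end{equation*}
so the correct pairing identity is ${}_{V^*}\langle\Delta u,v\rangle_V=-\int_\Lambda u(\xi)v(\xi)\,d\xi$, which is in fact how \cite[Lemma 4.1.13]{LR1} reads; the plus sign in the statement as reproduced in this paper is a typo, and the minus sign is what the paper actually uses, since otherwise $(\Psi2)$ would give $2\,{}_{V^*}\langle\Delta\Psi(t,u,\mu),u\rangle_V\geq\theta\|u\|_V^r-C(\cdots)$, contradicting the coercivity $({\mathbf H}{\mathbf 2})$ needed for $A=\Delta\Psi$. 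So your proof as written asserts an identity that your own computation does not produce; the fix is simply to carry the minus sign through (the isometry statement, the density extension and the limit passage are unaffected, since $|{-}1|=1$), and to note the sign discrepancy with the statement as printed.
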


We formulate the assumptions on $\Psi$.
\begin{hypothesis}\label{h2}
There are some constants $\theta,C>0$ such that the following conditions hold.
\begin{enumerate}
\item [$(\Psi1)$] For all $t\in[0,T]$, $v\in V$, the map
\begin{eqnarray*}
V\times\mathscr{P}_2(H)\ni(u,\mu)\mapsto\int_{\Lambda}\Psi(t,u,\mu)(\xi)v(\xi)d\xi
\end{eqnarray*}
is continuous.
\item [$(\Psi2)$] For all $t\in[0,T]$, $u\in V$ and $\mu\in\mathscr{P}_2(H)$,
\begin{eqnarray*}
\int_{\Lambda}\Psi(t,u,\mu)(\xi)u(\xi)d\xi\geq -C\big(1+\|u\|_H^2+\mu(\|\cdot\|_H^2)\big)+\theta\|u\|_V^r.
\end{eqnarray*}
\item [$(\Psi3)$]For all $t\in[0,T]$, $u,v\in V$ and $\mu,\nu\in\mathscr{P}_2(H)$,
\begin{eqnarray*}
\int_{\Lambda}\big(\Psi(t,u,\mu)(\xi)-\Psi(t,v,\nu)(\xi)\big)\big(u(\xi)-v(\xi)\big)d\xi\geq 0.
\end{eqnarray*}
\item [$(\Psi4)$] For all $t\in[0,T]$, $u\in V$ and $\mu\in\mathscr{P}_2(H)$,
\begin{eqnarray*}
\|\Psi(t,u,\mu)\|_{L^{\frac{r}{r-1}}}^{\frac{r}{r-1}}\leq C\big(1+\|u\|_V^{r}+\mu(\|\cdot\|_H^2)\big).
\end{eqnarray*}
\end{enumerate}
\end{hypothesis}
After the preparations above, we now define map $A:[0,T]\times V\times\mathscr{P}(H)\to V^*$ by
$$A(t,u,\mu):=\Delta\Psi(t,u,\mu).$$
In terms of Lemma \ref{l7}, it is easy to see that $A$ is well-defined and takes value in $V^*$. Moreover, it is clear that the conditions $(\Psi1)$-$(\Psi4)$ imply
$({\mathbf{H}}{\mathbf{1}})$-$({\mathbf{H}}{\mathbf{4}})$. In order to prove the LDP, we further assume that there are some constants $C,\gamma>0$ such that for any $t,s\in[0,T]$, $u,v\in V$ and $\mu,\nu\in\mathscr{P}_2(H)$,
\begin{equation}\label{29}
\|B(t,u,\mu)-B(t,v,\nu)\|_{L_2(U,H)}^2\leq C\big(\|u-v\|_H^2+\mathbb{W}_{2,H}(\mu,\nu)^2\big),
\end{equation}
\begin{equation}\label{30}
\|B(t,u,\mu)-B(s,u,\mu)\|_{L_2(U,H)}\leq C\Big(1+\|u\|_H+\sqrt{\mu(\|\cdot\|_H^2)}\Big)|t-s|^\gamma
\end{equation}
and
\begin{equation}\label{32}
\int_0^T\|B(t,0,\delta_0)\|_{L_2(U,H)}^2dt<\infty.
\end{equation}
Hence we are in the position to derive the following LDP result for the distribution dependent stochastic porous media equation.
\begin{theorem}\label{SPME}
Assume that (\ref{29})-(\ref{32}) hold and $\Psi$ fulfills the conditions $(\Psi1)$-$(\Psi4)$ above,
then  $\{X^{\varepsilon}:\varepsilon>0\}$ in (\ref{e9})
satisfies the LDP on $C([0,T]; H)$ with the
good rate function $I$ given by $(\ref{rf})$.
\end{theorem}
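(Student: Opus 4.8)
The plan is to obtain Theorem \ref{SPME} as a direct consequence of the abstract Theorem \ref{t1}: the whole argument amounts to checking that the concrete drift $A(t,u,\mu):=\Delta\Psi(t,u,\mu)$ and the diffusion $B$ fit the variational framework of Section 2 with $\alpha=r$, and then quoting Theorem \ref{t1}. So first I would record, via Lemma \ref{l7}, that $\Delta$ maps $L^{r/(r-1)}(\Lambda)$ isometrically into $(L^r(\Lambda))^*=V^*$; since $\Psi$ takes values in $L^{r/(r-1)}(\Lambda)$, this shows that $A(t,u,\mu)\in V^*$ is well defined and gives the pairing identity ${}_{V^*}\langle A(t,u,\mu),v\rangle_V=\int_\Lambda\Psi(t,u,\mu)(\xi)v(\xi)\,d\xi$ for $v\in V$, which is the bridge between the conditions $(\Psi1)$--$(\Psi4)$ and the abstract hypotheses.

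Next I would verify $(\mathbf{H1})$--$(\mathbf{H5})$ one by one. Demicontinuity $(\mathbf{H1})$ is literally $(\Psi1)$ after the pairing identity. For coercivity $(\mathbf{H2})$ I would first use (\ref{29}) together with $\mathbb{W}_{2,H}(\mu,\delta_0)^2=\mu(\|\cdot\|_H^2)$ and (\ref{32}) to get $\|B(t,u,\mu)\|_{L_2(U,H)}^2\le C(1+\|u\|_H^2+\mu(\|\cdot\|_H^2))$, and then combine this with $(\Psi2)$ read through the pairing identity, which yields $2{}_{V^*}\langle A(t,u,\mu),u\rangle_V+\|B(t,u,\mu)\|_{L_2(U,H)}^2\le C(1+\|u\|_H^2+\mu(\|\cdot\|_H^2))-\theta\|u\|_V^r$. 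The monotonicity half of $(\mathbf{H3})$ is $(\Psi3)$ through the pairing identity (the $\mathbb{W}_{2,H}(\mu,\nu)^2$ term on the right-hand side being free, as it is nonnegative), while the Lipschitz half of $(\mathbf{H3})$ is exactly (\ref{29}); the growth bound $(\mathbf{H4})$ follows from $(\Psi4)$ together with the isometry $\|A(t,u,\mu)\|_{V^*}^{r/(r-1)}=\|\Psi(t,u,\mu)\|_{L^{r/(r-1)}}^{r/(r-1)}$; and the time-Hölder condition $(\mathbf{H5})$ is exactly (\ref{30}). Since in addition (\ref{32}) supplies the integrability requirement $\int_0^T\|B(s,0,\delta_0)\|_{L_2(U,H)}^2\,ds<\infty$, all hypotheses of Theorem \ref{t1} hold, and the LDP on $C([0,T];H)$ with the good rate function $I$ of (\ref{rf}) follows at once.

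I do not anticipate a genuine obstacle here: the example is designed precisely so that the general theorem applies. The only two points that require a little care are (i) the sign and duality bookkeeping when translating $(\Psi2)$ and $(\Psi3)$, which are stated for $\Psi$, into the coercivity and monotonicity inequalities for $A=\Delta\Psi$ via Lemma \ref{l7}; and (ii) rewriting the ``centred at $B(t,0,\delta_0)$'' Lipschitz bound (\ref{29}) into the $\mu(\|\cdot\|_H^2)$-type growth estimate on $B$ needed in $(\mathbf{H2})$. Both are routine, so the proof is essentially a verification followed by an invocation of Theorem \ref{t1}.
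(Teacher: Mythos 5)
Your proposal is correct and is essentially the paper's own argument: identify $A(t,u,\mu):=\Delta\Psi(t,u,\mu)$ as a well-defined $V^*$-valued map via Lemma \ref{l7}, check that $(\Psi1)$--$(\Psi4)$ together with (\ref{29})--(\ref{32}) yield $({\mathbf{H}}{\mathbf{1}})$--$({\mathbf{H}}{\mathbf{5}})$ (with $\alpha=r$) and the integrability of $B(\cdot,0,\delta_0)$, and then invoke Theorem \ref{t1}. The sign caveat you flag is the right one to watch: in \cite[Lemma 4.1.13]{LR1} the duality carries a minus sign, ${}_{V^*}\langle\Delta u,v\rangle_V=-\int_\Lambda u(\xi)v(\xi)\,d\xi$, and it is precisely this sign that converts the lower bounds in $(\Psi2)$ and $(\Psi3)$ into the coercivity and monotonicity upper bounds required of $A=\Delta\Psi$ in $({\mathbf{H}}{\mathbf{2}})$ and $({\mathbf{H}}{\mathbf{3}})$.
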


\begin{Rem}
In \cite{HL}, the authors have established the strong/weak existence and uniqueness of solutions for more general type of distribution dependent stochastic porous media equations.
 To the best of our knowledge, there is no LDP result in the literature obtained for McKean-Vlasov type quasilinear SPDE such as stochastic porous media equations.
\end{Rem}

\subsection{McKean-Vlasov stochastic $p$-Laplace equation}\label{laplace}
Now we  apply our main result the establish the LDP for the following type McKean-Vlasov stochastic $p$-Laplace equation
\begin{equation}\label{e10}
\left\{ \begin{aligned}
&dX^\varepsilon_t=\Big[div(|\nabla X^\varepsilon_t|^{p-2}\nabla X^\varepsilon_t)+F(t,X^\varepsilon_t,\mathscr{L}_{X^\varepsilon_t})\Big]dt+\sqrt{\varepsilon}B(t,X^\varepsilon_t,\mathscr{L}_{X^\varepsilon_t})dW_t,\\
&X^\varepsilon_0=x\in H,
\end{aligned} \right.
\end{equation}
where $p\geq2$ and $W_t$ is a cylindrical Wiener process defined
on a probability space $(\Omega,\mathscr {F},\mathscr{F}_t,\mathbb{P})$ taking values in $U$.

We set the Gelfand triple as follows
$$V:=W_0^{1,p}(\Lambda)\subset H:=L^2(\Lambda)\subset(W_0^{1,p}(\Lambda))^*=V^*.$$
Suppose that  there is a constant $C>0$ such that for all $t\in[0,T]$, $u,v\in V$ and $\mu,\nu\in\mathscr{P}_2(H)$, the map $F:[0,T]\times V\times\mathscr{P}(H)\to H$ satisfies
\begin{eqnarray}\label{31}
\|F(t,u,\mu)-F(t,v,\nu)\|_H\leq C\big(\|u-v\|_H+\mathbb{W}_{2,H}(\mu,\nu)\big).
\end{eqnarray}

Denote $\bar{A}(u):=div(|\nabla u|^{p-2}\nabla u)$, which is called $p\text{-}Laplacian$ operator.
It is well-known that the operator $\bar{A}+F$ satisfies $({\mathbf{H}}{\mathbf{1}})$-$({\mathbf{H}}{\mathbf{4}})$, interested readers can refer to e.g.~\cite[Example 4.1.9]{LR1} for the detailed proof. Thus, according to Theorem \ref{t1} we have the following LDP result for distribution dependent stochastic $p$-Laplace equations.
\begin{theorem}\label{laplace}
Assume that (\ref{29})-(\ref{32}) and (\ref{31}) hold,
then  $\{X^{\varepsilon}:\varepsilon>0\}$ in (\ref{e10})
satisfies the LDP on $C([0,T]; H)$ with the
good rate function $I$ given by $(\ref{rf})$.
\end{theorem}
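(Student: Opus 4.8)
The plan is to verify that equation~(\ref{e10}) fits into the abstract framework of Theorem~\ref{t1}, so that the LDP follows by a direct application. Concretely, I would set
$$A(t,u,\mu):=\bar A(u)+F(t,u,\mu)=\mathrm{div}(|\nabla u|^{p-2}\nabla u)+F(t,u,\mu),$$
with the Gelfand triple $V=W_0^{1,p}(\Lambda)\subset H=L^2(\Lambda)\subset V^*$, and $B$ as given, and then check Hypotheses~$(\mathbf{H1})$--$(\mathbf{H5})$ together with the integrability condition $\int_0^T\|B(s,0,\delta_0)\|_{L_2(U,H)}^2\,ds<\infty$. Once these are in place, Theorem~\ref{t1} immediately yields the LDP on $C([0,T];H)$ with the rate function~(\ref{rf}).

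The verification splits into the $\bar A$-part and the $F$-part. For $\bar A$ alone, the demicontinuity, coercivity, monotonicity and growth estimates are classical (as cited, see~\cite[Example 4.1.9]{LR1}): in particular ${}_{V^*}\langle\bar A(u),u\rangle_V=-\|\nabla u\|_{L^p}^p\le -\theta\|u\|_V^p$ after using the equivalence of norms on $W_0^{1,p}$, $\|\bar A(u)\|_{V^*}^{p/(p-1)}\le C\|u\|_V^p$, and ${}_{V^*}\langle\bar A(u)-\bar A(v),u-v\rangle_V\le 0$ by the monotonicity of the $p$-Laplacian. Adding the Lipschitz perturbation $F$ preserves all of these: $(\mathbf{H1})$ follows since $u\mapsto F(t,u,\mu)$ is continuous from~(\ref{31}) (Lipschitz $V\hookrightarrow H$ into $H$) and $\mu\mapsto F(t,u,\mu)$ is continuous in $\mathbb{W}_{2,H}$; for $(\mathbf{H2})$ and $(\mathbf{H3})$ one absorbs the cross terms $2{}_{V^*}\langle F(t,u,\mu),u\rangle_V=2\langle F(t,u,\mu),u\rangle_H$ and $2\langle F(t,u,\mu)-F(t,v,\nu),u-v\rangle_H$ by Cauchy--Schwarz and Young, producing only $C(1+\|u\|_H^2+\mu(\|\cdot\|_H^2))$-type terms plus a harmless $\tfrac\theta2\|u\|_V^\alpha$ that is dominated by the coercivity gain (here $\alpha=p$); for $(\mathbf{H4})$ one uses $\|F(t,u,\mu)\|_{V^*}\le C\|F(t,u,\mu)\|_H\le C(1+\|u\|_H+\sqrt{\mu(\|\cdot\|_H^2)})$ and the elementary inequality $(a+b)^{p/(p-1)}\le C(a^{p/(p-1)}+b^{p/(p-1)})$, noting $\|u\|_H\le C\|u\|_V$. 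The diffusion assumptions $(\mathbf{H3})$ (the $B$-part), $(\mathbf{H5})$ and the finiteness of $\int_0^T\|B(s,0,\delta_0)\|_{L_2}^2\,ds$ are exactly~(\ref{29}), (\ref{30}) and~(\ref{32}).

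I do not expect a serious obstacle here; the statement is essentially a corollary. The only point requiring a little care is bookkeeping the exponents: one must make sure the constant $\alpha$ in $(\mathbf{H2})$ is taken to be $p$ and that the lower-order term $\|u\|_V^{p}$ coming from estimating $\langle F,u\rangle$ (after Young's inequality with the $H$-norm) can indeed be dominated --- but since $F$ maps into $H$ and $\|u\|_H\le C\|u\|_V$ with no power of $\|u\|_V$ appearing on the right after Cauchy--Schwarz, the $F$-contribution to the coercivity inequality is only $C(1+\|u\|_H^2+\mu(\|\cdot\|_H^2))$ with no $\|u\|_V$-term at all, so nothing needs to be absorbed. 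Having checked $(\mathbf{H1})$--$(\mathbf{H5})$ and the integrability condition, I would conclude: by Theorem~\ref{t1}, $\{X^\varepsilon:\varepsilon>0\}$ satisfies the LDP on $C([0,T];H)$ with the good rate function $I$ given by~(\ref{rf}), which completes the proof. The same remark applies when $p=2$, covering the distribution-dependent semilinear case.
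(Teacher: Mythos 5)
Your proposal is correct and is essentially the paper's own argument: the paper also proves Theorem~\ref{laplace} by viewing (\ref{e10}) as a special case of Theorem~\ref{t1}, citing \cite[Example 4.1.9]{LR1} for the fact that the $p$-Laplacian satisfies $({\mathbf{H}}{\mathbf{1}})$--$({\mathbf{H}}{\mathbf{4}})$ in the Gelfand triple $W_0^{1,p}(\Lambda)\subset L^2(\Lambda)\subset (W_0^{1,p}(\Lambda))^*$ and using (\ref{31}) for the Lipschitz perturbation $F$ together with (\ref{29})--(\ref{32}) for $B$. You merely spell out the routine perturbation estimates in more detail (note that, exactly as in the paper, turning the Lipschitz condition (\ref{31}) into a growth bound on $F$ tacitly uses a bound on $\|F(t,0,\delta_0)\|_H$), so no genuinely different route or gap is involved.
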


\begin{Rem}
The Freidlin-Wentzell type LDP for stochastic $p$-Laplace equation has been established in the works \cite{L1,RZ2}. In this work, we are able to extend the corresponding LDP result to the distribution dependent case and also impose weaker assumptions. In particular, if we take $p=2$, $\bar{A}$ reduces to the classical Laplace operator. Therefore, our result above also covers a class of distribution dependent semilinear SPDEs.
\end{Rem}

\subsection{McKean-Vlasov stochastic differential equations}
Our main result is also applicable to finite dimensional McKean-Vlasov SDE, if we consider $V=H=\mathbb{R}^d$ with the Euclidean norm $|\cdot|$ and inner product $\langle\cdot,\cdot\rangle$,
\begin{equation}\label{e11}
\left\{ \begin{aligned}
&dX_t^\varepsilon=b(t,X_t^\varepsilon,\mathscr{L}_{X_t^\varepsilon})dt+\sqrt{\varepsilon}\sigma(t,X_t^\varepsilon,\mathscr{L}_{X_t^\varepsilon})dW_t,\\
&X_0^\varepsilon=x\in\mathbb{R}^d,
\end{aligned} \right.
\end{equation}
where $W_t$ is a standard $d$-dimensional Brownian motion defined
on a probability space $(\Omega,\mathscr {F},\mathscr{F}_t,\mathbb{P})$. Suppose the coefficients
$$b:[0,T]\times \mathbb{R}^d\times\mathscr{P}(\mathbb{R}^d)\rightarrow \mathbb{R}^d,~\sigma:[0,T]\times \mathbb{R}^d\times\mathscr{P}(\mathbb{R}^d)\rightarrow \mathbb{R}^{d\times d}$$
 are measurable and satisfy the following conditions, where $\mathbb{R}^{d\times d}$ denotes the set of real $d\times d$ matrices.
\begin{hypothesis}\label{h3}
There are some constants $\gamma,C>0$ such that the following conditions hold.
\begin{enumerate}
\item [$({\mathbf{A}}{\mathbf{1}})$] (Continuity) For all $t\in[0,T]$, the map
\begin{eqnarray*}
\mathbb{R}^d\times\mathscr{P}_2(\mathbb{R}^d)\ni(u,\mu)\mapsto b(t,u,\mu)
\end{eqnarray*}
is continuous.
\item [$({\mathbf{A}}{\mathbf{2}})$] (Monotonicity) For all $t\in[0,T]$, $u,v\in \mathbb{R}^d$ and $\mu,\nu\in\mathscr{P}_2(\mathbb{R}^d)$,
\begin{eqnarray*}
\langle b(t,u,\mu)-b(t,v,\nu),u-v\rangle \leq C\big(|u-v|^2+\mathbb{W}_{2,\mathbb{R}^d}(\mu,\nu)^2\big).
\end{eqnarray*}
\item [$({\mathbf{A}}{\mathbf{3}})$] (Growth) For all $t\in[0,T]$, $u\in \mathbb{R}^d$ and $\mu\in\mathscr{P}_2(\mathbb{R}^d)$,
\begin{eqnarray*}
|b(t,u,\mu)|\leq C\big(1+|u|+\mu(|\cdot|^2)^\frac{1}{2}\big).
\end{eqnarray*}
\item [$({\mathbf{A}}{\mathbf{4}})$] For all $t,s\in[0,T]$, $u,v\in \mathbb{R}^d$ and $\mu,\nu\in\mathscr{P}_2(\mathbb{R}^d)$,
\begin{equation*}
\|\sigma(t,u,\mu)-\sigma(t,v,\nu)\|^2\leq C\big(|u-v|^2+\mathbb{W}_{2,\mathbb{R}^d}(\mu,\nu)^2\big),
\end{equation*}
\begin{equation*}
\|\sigma(t,u,\mu)-\sigma(s,u,\mu)\|\leq C\Big(1+|u|+\sqrt{\mu(|\cdot|^2)}\Big)|t-s|^\gamma
\end{equation*}
and
\begin{equation*}
\int_0^T\|\sigma(t,0,\delta_0)\|^2dt<\infty,
\end{equation*}
where $\|\cdot\|$ denotes the matrix norm.
\end{enumerate}
\end{hypothesis}

Now we are in the position to derive the LDP for MVSDEs.
\begin{theorem}\label{SDE}
Assume that  Hypothesis \ref{h3} hold,
then $\{X^{\varepsilon}:\varepsilon>0\}$ in (\ref{e11})
satisfies the LDP on $C([0,T];\mathbb{R}^d)$ with the
good rate function $I$ given by $(\ref{rf})$.
\end{theorem}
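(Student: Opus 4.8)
The plan is to derive Theorem \ref{SDE} as a direct specialization of the abstract Theorem \ref{t1}. I would work with the degenerate Gelfand triple $V=H=V^{*}=\mathbb{R}^{d}$ (so that $\|\cdot\|_{V}=\|\cdot\|_{H}=|\cdot|$ and ${}_{V^{*}}\langle\cdot,\cdot\rangle_{V}=\langle\cdot,\cdot\rangle$), with $U=U_{1}=\mathbb{R}^{d}$ and exponent $\alpha=2$ in $({\mathbf{H}}{\mathbf{2}})$--$({\mathbf{H}}{\mathbf{4}})$, and I would identify $A(t,\omega,u,\mu):=b(t,\omega,u,\mu)$ and $B(t,\omega,u,\mu):=\sigma(t,\omega,u,\mu)$. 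Here $L_{2}(\mathbb{R}^{d},\mathbb{R}^{d})$ is just $\mathbb{R}^{d\times d}$ with the Hilbert--Schmidt (Frobenius) norm, equivalent to the matrix norm in Hypothesis \ref{h3}; the path space $C([0,T];H)$ becomes $C([0,T];\mathbb{R}^{d})$; a standard $d$-dimensional Brownian motion is in particular a cylindrical Wiener process on $U=\mathbb{R}^{d}$; and the joint measurability of $A,B$ required in the abstract setting is part of the standing assumptions on $b,\sigma$. It then suffices to check that Hypothesis \ref{h3} implies $({\mathbf{H}}{\mathbf{1}})$--$({\mathbf{H}}{\mathbf{5}})$ together with $\int_{0}^{T}\|B(s,0,\delta_{0})\|_{L_{2}(U,H)}^{2}\,ds<\infty$.

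Most of this is a term-by-term transcription. Since ${}_{V^{*}}\langle A(t,u,\mu),v\rangle_{V}=\langle b(t,u,\mu),v\rangle$ for each fixed $v$, the continuity in $({\mathbf{A}}{\mathbf{1}})$ gives the demicontinuity $({\mathbf{H}}{\mathbf{1}})$. The monotonicity $({\mathbf{A}}{\mathbf{2}})$ is exactly the $A$-part of $({\mathbf{H}}{\mathbf{3}})$, the first inequality of $({\mathbf{A}}{\mathbf{4}})$ is exactly its $B$-part, the second inequality of $({\mathbf{A}}{\mathbf{4}})$ is precisely $({\mathbf{H}}{\mathbf{5}})$, and the third displayed bound of $({\mathbf{A}}{\mathbf{4}})$ is the finiteness condition on $B(\cdot,0,\delta_{0})$. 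Finally $({\mathbf{A}}{\mathbf{3}})$ with $\alpha=2$ reads $|b(t,u,\mu)|^{2}\le C(1+|u|^{2}+\mu(|\cdot|^{2}))$, i.e.\ $\|A(t,u,\mu)\|_{V^{*}}^{\alpha/(\alpha-1)}\le C(1+\|u\|_{V}^{\alpha}+\mu(\|\cdot\|_{H}^{2}))$, which is $({\mathbf{H}}{\mathbf{4}})$.

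The only step that needs a small remark -- and which I expect to be the sole delicate point -- is the coercivity $({\mathbf{H}}{\mathbf{2}})$, because Hypothesis \ref{h3} controls $\sigma$ only through its Lipschitz bound and its $L^{2}$-in-time behaviour at $(0,\delta_{0})$. From $({\mathbf{A}}{\mathbf{3}})$ and Young's inequality, $2\langle b(t,u,\mu),u\rangle\le 2|b(t,u,\mu)|\,|u|\le C(1+|u|^{2}+\mu(|\cdot|^{2}))$, and from the first inequality of $({\mathbf{A}}{\mathbf{4}})$ with $(v,\nu)=(0,\delta_{0})$, $\|\sigma(t,u,\mu)\|^{2}\le 2\|\sigma(t,0,\delta_{0})\|^{2}+2C(|u|^{2}+\mu(|\cdot|^{2}))$. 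Adding these and, for an arbitrarily small fixed $\theta>0$, rewriting $\widetilde C(1+|u|^{2}+\mu(|\cdot|^{2}))=(\widetilde C+\theta)|u|^{2}+\widetilde C\mu(|\cdot|^{2})+\widetilde C-\theta|u|^{2}$, one obtains
$$2\langle b(t,u,\mu),u\rangle+\|\sigma(t,u,\mu)\|^{2}\le(\widetilde C+\theta)|u|^{2}+\widetilde C\mu(|\cdot|^{2})-\theta|u|^{2}+g(t),$$
where $g(t):=\widetilde C+2\|\sigma(t,0,\delta_{0})\|^{2}\in L^{1}([0,T])$ by the third bound of $({\mathbf{A}}{\mathbf{4}})$. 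This is $({\mathbf{H}}{\mathbf{2}})$ with the additive constant $C$ replaced by the time-integrable function $g$; since the well-posedness result \cite[Theorem 4.2.5]{LR1} underlying Proposition \ref{p1}, as well as the energy estimates in Lemmas \ref{l4}--\ref{l6} and Theorems \ref{t2}--\ref{t3}, use coercivity only after integrating in time and applying Gronwall, this substitution changes nothing. (If a literal constant is preferred, it suffices to add the harmless assumption $\sup_{t\in[0,T]}\|\sigma(t,0,\delta_{0})\|<\infty$.)

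With $({\mathbf{H}}{\mathbf{1}})$--$({\mathbf{H}}{\mathbf{5}})$ and the integrability condition verified, Theorem \ref{t1} applies verbatim and yields the LDP for $\{X^{\varepsilon}:\varepsilon>0\}$ on $C([0,T];\mathbb{R}^{d})$ with the good rate function $I$ of $(\ref{rf})$, where now $U=\mathbb{R}^{d}$ and the rate function is built from $\mathcal{G}^{0}$, which in this setting is governed by the finite-dimensional skeleton ODE $\dot{\bar X}^{\phi}_{t}=b(t,\bar X^{\phi}_{t},\mathscr{L}_{X^{0}_{t}})+\sigma(t,\bar X^{\phi}_{t},\mathscr{L}_{X^{0}_{t}})\phi_{t}$ with $\bar X^{\phi}_{0}=x$ and $X^{0}$ the solution of $(\ref{e6})$. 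In short, there is no genuine obstacle: the argument is a pure verification, and the only point requiring care is the coercivity/integrability bookkeeping just described.
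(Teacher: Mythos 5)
Your proposal is correct and follows essentially the same route as the paper, which treats Theorem \ref{SDE} as a direct specialization of Theorem \ref{t1} with $V=H=\mathbb{R}^d$, $\alpha=2$, $A=b$, $B=\sigma$, and verifies $({\mathbf{H}}{\mathbf{1}})$--$({\mathbf{H}}{\mathbf{5}})$ from Hypothesis \ref{h3}. Your extra remark on coercivity is a genuine (and correctly resolved) observation: since $({\mathbf{A}}{\mathbf{4}})$ only gives $\int_0^T\|\sigma(t,0,\delta_0)\|^2dt<\infty$, the additive constant in $({\mathbf{H}}{\mathbf{2}})$ must be replaced by an $L^1([0,T])$ function, which, as you note, is harmless because all the paper's estimates use this term only after integration in time.
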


\begin{Rem}
Dos Reis et al.~\cite{DST} used classical time discretization and exponential equivalence arguments to investigate the Freidlin-Wentzell's LDP for a class of MVSDEs under the monotonicity and locally Lipschitz conditions with respect to the solution and global Lipschitz with respect to the distribution. In this work, based on the weak convergence method and generalized variational framework, we give a more succinct proof for the LDP associated with some MVSDEs under monotonicity and linear growth assumptions.
\end{Rem}

\noindent\textbf{Acknowledgements} {The authors would like to thank the referees  for their very constructive  suggestions and valuable  comments. The research
of S. Li is supported by NSFC (No.~12001247),
 NSF of Jiangsu Province (No.~BK20201019),  NSF of Jiangsu Higher Education Institutions of China (No. 20KJB110015)
and the Foundation of Jiangsu Normal University (No.~19XSRX023). The research of W. Liu is supported by NSFC (No.~11822106, 11831014, 12090011) and the PAPD of Jiangsu Higher Education Institutions.}

\end{document}